% user docs for compositio.cls
 % date this file last revised
    % version of compositio.cls documented

\documentclass{compositio}

% packages
\usepackage[centertags]{amsmath}
\usepackage{amsfonts}
\usepackage{amssymb}
\usepackage{amsthm}
\usepackage{amscd}
\usepackage{hyperref}
\usepackage{tocloft}    

% packages
\usepackage[centertags]{amsmath}
\usepackage{amsfonts}
\usepackage{amssymb}
\usepackage{amsthm}
\usepackage{amscd}
\usepackage{hyperref}
\usepackage{algorithmic, algorithm}
\usepackage[all]{xy}

% definitions specific to this author guide only

\DeclareFontFamily{OT1}{rsfs}{}
\DeclareFontShape{OT1}{rsfs}{n}{it}{<-> rsfs10}{}
\DeclareMathAlphabet{\mathscr}{OT1}{rsfs}{n}{it}

% Definitions specific to this paper only

\newcommand{\tVtau}{\delta_{V}}
\newcommand{\tWtau}{\delta_{W}}
\newcommand{\qtau}{q}
\newcommand{\Stau}{S}
\newcommand{\Stauhat}{\widehat{\Stau}}
\newcommand{\Ttau}{T}
\newcommand{\Ttauhat}{\widehat{\Ttau}}
\newcommand{\Ttauc}{T_{c}}
\newcommand{\Btau}{B}
\newcommand{\Btauhat}{\widehat{\Btau}}
\newcommand{\Gtau}{G}
\newcommand{\Ktau}{K}

\newcommand{\Ftau}{k_0}

\newcommand{\rhotilde}{\widetilde{\rho}}

\newcommand{\cO}{\mathcal{O}}

\newcommand{\p}{\mathfrak{p}}

\newcommand{\cH}{\mathcal{H}}
\newcommand{\cA}{\mathcal{A}}

\newcommand{\cL}{\mathcal{L}}

\newcommand{\ds}{\displaystyle}

\newcommand{\cB}{\mathcal{B}}
\newcommand{\cE}{\mathcal{E}}

\newcommand{\et}{\text{\rm\'et}}

\newcommand{\ra}{\rightarrow}
\newcommand{\xra}[1]{\xrightarrow{#1}}
\newcommand{\hra}{\hookrightarrow}

\newcommand{\Gm}{\mathbf{G}_m}

\newcommand{\mthree}[9]{\left [
        \begin{matrix}#1&#2&#3\\#4&#5&#6\\#7&#8&#9
        \end{matrix}\right ]}
\newcommand{\mtwo}[4]{\left(
        \begin{matrix}#1&#2\\#3&#4
        \end{matrix}\right)}

\newcommand{\vphi}{\varphi}

\newcommand{\Zhat}{\widehat{\Z}}

\newcommand{\comment}[1]{}
\newcommand{\Q}{\mathbf{Q}}

\newcommand{\R}{\mathbf{R}}

\newcommand{\cC}{\mathcal{C}}

\newcommand{\cS}{\mathcal{S}}
\newcommand{\cZ}{\mathcal{Z}}

\newcommand{\C}{\mathbf{C}}

\newcommand{\bc}{\mathbf{c}}

\newcommand{\Qbar}{\overline{\Q}}

\newcommand{\cT}{\mathcal{T}}

\newcommand{\Z}{\mathbf{Z}}

\newcommand{\A}{\mathcal{A}}

\newcommand{\isom}{\cong}

\newcommand{\Af}{\mathbf{A}_f}

   % noncanonical isomorphism
% Specific to Sha 
\DeclareFontEncoding{OT2}{}{} % to enable usage of cyrillic fonts

\renewcommand{\H}{\HH}

\newcommand{\Xtilde}{\widetilde{X}}
\newcommand{\Ktilde}{\widetilde{K}}

\newcommand{\Glochat}{\widehat{G}}
\newcommand{\Tlochat}{\widehat{T}}

% For numbering paragraphs
\setcounter{secnumdepth}{5}

% Math operator declarations

\DeclareMathOperator{\cl}{cl}

\DeclareMathOperator{\proj}{proj}
\DeclareMathOperator{\dist}{dist}

\DeclareMathOperator{\Spec}{Spec}

\DeclareMathOperator{\Tr}{Tr}

\DeclareMathOperator{\ab}{ab}
\DeclareMathOperator{\Aut}{Aut}

\DeclareMathOperator{\Fr}{Fr}
\DeclareMathOperator{\Ver}{Ver}

\DeclareMathOperator{\GL}{\mathbf{GL}}

\DeclareMathOperator{\Sp}{\mathbf{Sp}}

\DeclareMathOperator{\U}{\mathbf{U}}
\DeclareMathOperator{\SU}{\mathbf{SU}}
\DeclareMathOperator{\Gal}{Gal}
\DeclareMathOperator{\SL}{\mathbf{SL}}
\DeclareMathOperator{\SO}{\mathbf{SO}}

\DeclareMathOperator{\HH}{H}
\DeclareMathOperator{\G}{\mathbf{G}}
\DeclareMathOperator{\Gtilde}{\widetilde{\mathbf{G}}}
\DeclareMathOperator{\Hbf}{\mathbf{H}}
\DeclareMathOperator{\Bbf}{\mathbf{B}}
\DeclareMathOperator{\Tbf}{\mathbf{T}}

\DeclareMathOperator{\Nbf}{\mathbf{N}}

\DeclareMathOperator{\Zbf}{\mathbf{Z}}

\DeclareMathOperator{\Sbf}{\mathbf{S}}

\DeclareMathOperator{\Ghat}{\widehat{\mathbf{G}}}

\DeclareMathOperator{\diag}{diag}
\DeclareMathOperator{\Hom}{Hom}

\DeclareMathOperator{\Res}{Res}

\DeclareMathOperator{\CH}{CH}
\DeclareMathOperator{\Sh}{Sh}
\DeclareMathOperator{\End}{End}

\DeclareMathOperator{\inv}{inv}
\DeclareMathOperator{\GU}{\mathbf{GU}}
\DeclareMathOperator{\AJ}{AJ}
\DeclareMathOperator{\ur}{ur}
\DeclareMathOperator{\CM}{CM}

\DeclareMathOperator{\Stab}{Stab}
\DeclareMathOperator{\pr}{pr}

\DeclareMathOperator{\cyc}{cyc}

\DeclareMathOperator{\rec}{rec}
\DeclareMathOperator{\der}{der}
\DeclareMathOperator{\ad}{ad}
\DeclareMathOperator{\Art}{Art}

\DeclareMathOperator{\Hyp}{\mathbf{Hyp}}
\DeclareMathOperator{\Inv}{\mathbf{Inv}}

\DeclareMathOperator{\Supp}{Supp}

\DeclareMathOperator{\geom}{geom}

\DeclareMathOperator{\IH}{\mathbf{IH}}

% Theoremstyles
\theoremstyle{plain} 
\newtheorem{thm}{Theorem}[section] % number like 3.1, 3.2, 3.3, etc.
\newtheorem{conj}{Conjecture}[section]
\newtheorem{prop}[thm]{Proposition}
\newtheorem{cor}[thm]{Corollary}
\newtheorem{lem}[thm]{Lemma}

\theoremstyle{definition} % 'here we change the style'
\newtheorem{defn}[thm]{Definition} % numbered with thm
\theoremstyle{remark} % 'style changed again'
\newtheorem{rem}{Remark}

% for left superscripts 
\newcommand{\leftexp}[2]{{\vphantom{#2}}^{#1}{#2}}

\newcounter{tasknumber}
\setcounter{tasknumber}{0}
\newcommand{\task}[2][]{%
  \addtocounter{tasknumber}{1}%
  \begin{center}%
  \framebox[1.1\width]{\begin{minipage}{0.9\textwidth}%
  \textbf{Task \arabic{tasknumber}} \textit{\if!#1(unassigned)!\else (#1)\fi}: {#2}%
  \end{minipage}}%
  \end{center}%
}

\newcounter{assumptionnumber}
\setcounter{assumptionnumber}{0}
\newcommand{\assumption}[2][]{%
  \addtocounter{assumptionnumber}{1}%
  \begin{center}%
  \framebox[1.1\width]{\begin{minipage}{0.9\textwidth}%
  \textbf{Assumption \arabic{assumptionnumber}} \textit{\if!#1!\else (#1)\fi}: {#2}%
  \end{minipage}}%
  \end{center}%
}

% notes (only in draft mode)
\newcommand{\authnote}[2][]{\noindent {\if!#1!  {\bf TODO} \else {\small \bf #1} \fi: #2} \vspace{0.1in}}
\newcommand{\dimnote}[1]{{\authnote[Dimitar]{\textbf{\small #1}}}}

\makeatletter
\def\Ddots{\mathinner{\mkern1mu\raise\p@
\vbox{\kern7\p@\hbox{.}}\mkern2mu
\raise4\p@\hbox{.}\mkern2mu\raise7\p@\hbox{.}\mkern1mu}}
\makeatother

\title[]{Hecke and Galois Properties of Special Cycles on Unitary Shimura Varieties}
 
\author{Dimitar Jetchev}
\email{dimitar.jetchev@epfl.ch}
\address{Ecole Polytechnique F\'ed\'erale de Lausanne, Switzerland}

\classification{}
\keywords{}
\thanks{}

\begin{document}

\begin{abstract}
We define and study a collection of special cycles on certain non-PEL Shimura varieties for $\U(2,1) \times \U(1,1)$ that appear naturally in the context of the conjectures of Gan, Gross and Prasad on restrictions of automorphic forms for unitary groups and conjectural generalizations of the Gross--Zagier formula. We express the Galois action in terms of the distance function on the Bruhat--Tits buildings for these groups. In addition, we calculate explicitly the Hecke polynomial appearing in the congruence relation conjectured by Blasius and Rogawski. 
Using the action of the local Hecke algebra on the Bruhat--Tits building, we establish explicit relations (distribution relations) between the Hecke action and the Galois action on the special cycles. These relations yield a new Euler system that can be used to study Selmer groups for certain Galois representations associated to automorphic forms on unitary groups and prove new instances of the Bloch--Kato--Beilinson conjecture.  
\end{abstract}
 
\maketitle

%\vspace*{6pt}
%\tableofcontents

\section{Introduction}
\subsection{Motivation}
In \cite{gross:heegner-representation}, Gross outlines a program to link automorphic $L$-functions with special cycles on Shimura varieties via the Gross--Prasad restriction problems for automorphic representations. A basic case is the case of classical Heegner points on modular curves and restrictions of automorphic representations on $\GL_2$ to a non-split torus associated to an imaginary quadratic field. Subsequently, the work of Gan, Gross and Prasad 
provides Gross--Zagier type conjectures for classical groups \cite[\S 26--27]{gan-gross-prasad} relating two major open questions in number theory: the Birch and Swinnerton-Dyer conjecture (and its generalizations to higher dimensions via the Bloch--Kato--Beilinson conjectures) and the Langlands reciprocity conjectures.

It is thus of interest to study whether the Gross--Zagier type conjectures from \cite{gan-gross-prasad}
imply new results towards the Bloch--Kato--Beilinson conjectures. Such a program will aim at generalizing Kolyvagin's proof of the Birch and Swinnerton-Dyer conjecture for the case when the analytic rank of the elliptic curve is at most one \cite{kolyvagin:euler_systems, gross:kolyvagin} and provide a more conceptual representation-theoretic understanding of the latter.  
To achieve that, one needs an Euler system similar to Kolyvagin's Euler system of Heegner points. Since the Heegner point analogue of \cite[Conj.27.1]{gan-gross-prasad} is a higher-dimensional cycle on a Shimura variety, one could hope for an Euler system constructed from similar special cycles, but defined over increasing abelian extensions of the reflex field. Using $p$-adic Abel--Jacobi maps, one can obtain cohomology classes in the appropriate Selmer groups of geometric $p$-adic Galois representations appearing in the cohomology of Shimura varieties associated to unitary groups and subsequently, apply Kolyvagin's method to these classes. 

\subsection{Main results}
This article carries out the construction of an Euler system for higher rank unitary groups. More precisely, we define a collection of special cycles and study their Hecke and Galois properties. Establishing the Euler system relations is achieved via a comparison of the two actions (distribution relations). 
%We work with $n$-dimensional 
%and $(n-1)$-dimensional unitary groups of isometries where precise Gross--Zagier conjectures relating the non-vanishing of the derivative of an automorphic $L$-function to the height of a cycle have already been stated \cite[Conj.27.1]{gan-gross-prasad}. 
We note that lots of recent progress has been made towards the Gross--Zagier type conjecture 
\cite[Conj.27.1]{gan-gross-prasad} starting with the work of W. Zhang on the relative trace formula and a conjectural arithmetic fundamental lemma \cite{zhang:afl}. The latter has been proven in \cite{zhang:afl} in the case $\U(2,1) \times \U(1,1)$. Partial progress has been made by W. Zhang, Rapoport and Terstiege in the general case $\U(n, 1) \times \U(n-1, 1)$ \cite{rapoport-terstiege-zhang}. Although the main results of this paper are for $n=3$, we do some of the computations for arbitrary $n$ as those will be used in forthcoming work. 

\paragraph{Hermitian spaces, unitary groups, Shimura varieties and special cycles.} Let $F$ be a totally real number field with $[F : \Q] = d$ and let $E/F$ be a totally imaginary quadratic extension with non-trivial automorphism denoted by $x \mapsto \overline{x}$ for $x \in E$. Let $\rho_1, \dots, \rho_d$ be the real 
places of $F$. Choose an embedding $\rhotilde_1 \colon E \hra \C$ that extends the place 
$\rho_1 \colon F \hra \R$.  Moreover, fix embeddings 
$\iota_\tau \colon \overline{E} \hra \overline{E}_\tau$ for every finite place $\tau$ of $E$.

Let $n \geq 3$ be an odd integer and let $(V, \langle\,,\rangle)$ be a non-degenerate Hermitian space of dimension $n$ over $E$. 
Suppose that $V$ has signature $(n-1, 1)$ at $\rho_1$ and signatures $(n, 0)$ at each of the places 
$\rho_2, \dots, \rho_d$. Let $W \subset V$ be a Hermitian subspace of dimension 
$n-1$ that has signature $(n-2, 1)$ at $\rho_1$ and signatures $(n-1, 0)$ at 
$\rho_2, \dots, \rho_d$. Let $D \subset V$ be the $E$-line that is the orthogonal complement of $W$ with respect to the Hermitian form, i.e., for which $V = W \perp D$. 

Associated to $V$ and $W$ are the groups of unitary isometries $\U(V)$ and $\U(W)$, respectively, defined over $F$. We view $\Hbf = \Res_{F/\Q} \U(W)$ as an algebraic subgroup of $\G = \Res_{F/\Q} ( \U(V) \times \U(W))$ via the diagonal embedding (that is, the natural embedding $\U(W) \hra \U(V)$ on the first factor\footnote{Here, each unitary isometry for $W$ is extended to a unitary isometry for $V$ via the identity on $D$.} and the identity map on the second factor). 

We assume (without losing generality) that $D$ contains a vector $e_D \in D$ with $\langle e_D, e_D\rangle = 1$ (this means that $L_D = \cO_E e_D$ is a global self-dual $\cO_E$-lattice in $D$). Indeed, if not, take 
any $e_D \in D$ and let $\langle e_D, e_D \rangle = \lambda \in F^\times$ (such an $e_D$ exists since 
$\langle \,,\rangle$ is non-degenerate). By rescaling the hermitian pairing with $\lambda^{-1}$, the unitary groups are unchanged\footnote{From the point of view of Shimura varieties and special cycles which is the goal of this paper, rescaling the hermitian form by a scalar in $F^\times$ will not affect any of the geometric objects of study.}; yet, $\langle e_D, e_D \rangle = 1$. Fix now any $\cO_E$-lattice 
$L_W \subset W$ of full rank for which $L_W \subset L_W^{\vee}$ and consider the $\cO_E$-lattice 
$L_V = L_W \oplus L_D \subset V$ (an integral structure or a lattice of full rank for $V$). 

Associated to the $\Q$-algebraic groups $\Hbf$ and $\G$ are Shimura data 
$(\Hbf, Y)$ and $(\G, X)$ introduced in Section~\ref{subsec:shimvar}. We also introduce (again in Section~\ref{subsec:shimvar}) some compact open subgroups $K_{\Hbf} \subset \Hbf(\Af)$ and $K \subset \G(\Af)$ (obtained from the integral structures $L_V$ and $L_W$) where $\Af$ denotes the finite ad\`eles of $\Q$. 
These data give rise to Shimura varieties $\Sh_{K_{\Hbf}}(\Hbf, Y)$ and $\Sh_{K}(\G, X)$ with reflex fields $E$ and a natural diagonal cycle $\Sh_{K_{\Hbf}}(\Hbf, Y) \hra \Sh_{K}(\G, X)$. 
Considering $\G(\Af)$-translates of a connected component of the small Shimura variety $\Sh_{K_{\Hbf}}(\Hbf, Y)$ yields a collection of special cycles $\cZ_K(g) \subset \Sh_{K}(\G, X)$ for $g \in \G(\Af)$ defined (by Shimura reciprocity laws) over abelian extensions of $E$ (see Section~\ref{subsec:speccyc}). These cycles are higher-dimensional analogues of higher Heegner points (see \cite{gross:heegner_points} and \cite{gross:kolyvagin}). 
%Our goal is to compare the Hecke and Galois properties of these cycle that will allow for the construction of a novel Euler system.  

\paragraph{Galois properties of CM cycles.}\label{subsubsec:introgalprop} We first compute the field of definition of 
each cycle in terms of the distance function on the corresponding Bruhat--Tits buildings for $\U(V)$ and $\U(W)$ by describing the set $\cZ_K(\G, \Hbf)$ of special cycles and their Galois orbits adelically using reciprocity laws for the Galois action on the connected components for certain Shimura varieties associated to $\Hbf$. The latter implies 
that the orbits of the cycles under the decomposition group at a finite place $\tau$ of $F$ are in bijection with $H_\tau \backslash G_\tau /K_\tau$ (see Section~\ref{par:concomp} and Section~\ref{sec:galois-cycles}). Here, $G_{V, \tau} = \U(V)(F_\tau)$, 
$G_{W, \tau} = \U(W)(F_\tau)$, $G_\tau = G_{V, \tau} \times G_{W, \tau}$ and $H_\tau$ is the diagonal image of $G_{W, \tau}$ in $G_{\tau}$. 

\begin{defn}[(Allowable place of $F$)]
We call a finite place $\tau$ of $F$ of odd residue characteristic \emph{allowable} for the triple $(\G, \Hbf, K)$ if 
1) both 
$G_\tau$ and $H_\tau$ are quasi-split; 2) $K_\tau = K_{V, \tau} \times K_{W, \tau}$ where 
$K_{V, \tau} \subset G_{V, \tau}$ and $K_{W, \tau} := G_{W, \tau} \cap K_{V, \tau} \subset G_{W, \tau}$ are 
both hyperspecial maximal compact subgroups. 
According to the splitting behavior of $\tau$ in $E$, we call $\tau$ an allowable inert, split, or ramified place. 
\end{defn}

In this paper, we work locally at an allowable inert place $\tau$ of $F$ only\footnote{This case is the most important one from the point of view of applying Kolyvagin's arguments.} and treat the split and ramified places in forthcoming papers. The double quotient $H_\tau \backslash G_\tau / K_\tau$ is in bijection with the set of $H_\tau$-orbits $[L_{V, \tau}, L_{W, \tau}]$ of pairs $(L_{V, \tau}, L_{W, \tau})$ of self-dual local Hermitian $\cO_{E_\tau}$-lattices $L_{V, \tau} \subset V_\tau$ and $L_{W, \tau} \subset W_\tau$ where $V_\tau = V \otimes_E E_\tau$ and $W_\tau = W \otimes_E E_\tau$. In order to compute the completion at $\tau$ of the field of definition $E(\xi)$ of the cycle $\xi = \cZ_K(g)$, it suffices (by the reciprocity laws mentioned above) to compute the stabilizer of the corresponding pair in $H_\tau$. More precisely, the image under the determinant map of that stabilizer in $\U(1)(F_\tau)$ determines completely a norm subgroup of of $E_\tau^\times$ which, by local class field theory, determines this completion as an abelian extension of $E_\tau$. 
It turns out (see Section~\ref{subsec:loccond}) that this norm subgroup is of the form $\cO_c^\times 
\subset E_\tau^\times$ where $\cO_c = \cO_{F_\tau} + \varpi^c \cO_{E_\tau} \subset \cO_{E_\tau}$ 
is the local order of $\cO_{E_\tau}$ of conductor $\varpi^c$ for $\varpi \in \cO_{F_\tau}$ a uniformizer. 
We call $\bc_\tau([L_{V, \tau}, L_{W, \tau}]) = \varpi^c$ the local conductor at $\tau$. 
The computation of the local conductor is then given by the following: 

%Indeed, by reciprocity laws for the Galois action on the group of connected components for Shimura varieties for $\Hbf$, this will give us the corresponding norm subgroup of $E_\tau^\times$ which, by local class field theory, will determine the corresponding abelian extension of $E_\tau$. Note that the norm subgroup is of the form $\cO_n^\times 
%\subset E_\tau^\times$ where $\cO_n = \cO_{F_\tau} + \varpi^n \cO_{E_\tau} \subset \cO_{E_\tau}$ where $\varpi \in \cO_{E_\tau}$ is a uniformizer (we call $\bc_\tau([L_{V, \tau}, L_{W, \tau}]) = \varpi^n$ the local conductor at $\tau$) \dimnote{Why are all norm subgroups of this form?}. 
%The computation of the local conductor is then given by the following: 

\begin{thm}[(Local Conductor Formula)]\label{thm:galois}
Let $n = 3$ and let $\tau$ be an allowable inert place of $F$. 
(i) The map 
$$
\inv_\tau \colon (L_{V, \tau}, L_{W, \tau}) \mapsto \left ( \dist (L_{V, \tau}, \pr_{W_\tau} (L_{V, \tau})),  \dist (\pr_{W_\tau} (L_{V, \tau}), L_{W_\tau}) \right )
$$
induces a bijection between the set of $H_\tau$-orbits on $G_\tau / K_\tau$ and the set $\Inv_\tau = \{(a, b) \colon a, b \geq 0\}$. Here, $\pr_{W_\tau} (L_{V, \tau})$ denotes the convex projection of the hyperspecial point corresponding to $L_{V, \tau}$ to the Bruhat--Tits building\footnote{It is a tree in this case and as we see from Fig.~\ref{fig:buildings}, the projection of any hyperspecial (black) point is hyperspecial (black) point as well.} and $\dist$ indicates 
the distance function\footnote{In this case, $\cB(V_\tau)$ is the tree on Fig.~\ref{fig:buildings} and the distance is the usual distance in the sense of a tree where we normalize so that the distance between a black (hyperspecial) and a white (special, but not hyperspecial) vertices is 1/2.} on the building $\cB(V_\tau)$ of $G_{V, \tau}$.  

\noindent (ii) Given a $H_\tau$-orbit $[(L_{V, \tau}, L_{W, \tau})]$ of pairs of lattices $(L_{V, \tau}, L_{W, \tau}) \in \cL_\tau$ with $\inv_\tau([(L_{V, \tau}, L_{W, \tau})]) = (a, b)$, the local conductor is 
\begin{equation}\label{eq:localcond}
\bc_\tau([L_{V, \tau}, L_{W, \tau}]) = \mathfrak \varpi^{\min \left \{ a, 2b \right \}}. 
\end{equation}
\end{thm}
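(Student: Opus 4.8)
The plan is to translate the statement into a concrete problem about Hermitian $\cO_{E_\tau}$-lattices and the action of the hyperspecial maximal compact of $\U(W_\tau)$, and then to compute. Since $\tau$ is an allowable inert place, both $G_{V,\tau}$ and $G_{W,\tau}$ are quasi-split and unramified, so a self-dual Hermitian $\cO_{E_\tau}$-lattice is unique up to isometry in $V_\tau$ and in $W_\tau$; hence $G_{V,\tau}$ (resp.\ $G_{W,\tau}$) acts transitively on self-dual lattices with stabilizers the hyperspecial maximal compacts $K_{V,\tau}$, $K_{W,\tau}$, and $G_\tau/K_\tau$ is identified with the set of pairs $(L_{V,\tau},L_{W,\tau})$ of self-dual lattices, i.e.\ with pairs of black (hyperspecial) vertices of the trees $\cB(V_\tau)$ and $\cB(W_\tau)$. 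Using $V=W\perp D$ and the self-dual lattice $L_D\subset D_\tau$ (which exists precisely because $\tau$ is allowable), the inclusion $W\hookrightarrow V$ embeds $\cB(W_\tau)$ as a subtree of $\cB(V_\tau)$ via $L_{W,\tau}\mapsto L_{W,\tau}\oplus L_D$, and $H_\tau=\U(W_\tau)$ acts on $\cB(V_\tau)$ by automorphisms preserving this subtree. Because $\pr_{W_\tau}$ and the two distances are intrinsic, $\inv_\tau$ factors through $H_\tau$-orbits at the outset.

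For part (i) I would first use transitivity of $\U(W_\tau)$ on the black vertices of $\cB(W_\tau)$ to normalize $L_{W,\tau}$ to a fixed self-dual lattice $L_W^0$, with vertex $P_0$; then $H_\tau$-orbits of pairs correspond to $K_{W,\tau}$-orbits of self-dual lattices $L_{V,\tau}\subset V_\tau$, and both invariants become functions of $L_{V,\tau}$ alone. The key structural fact is the following: writing $p:=L_{V,\tau}\cap W_\tau$ and $L_{V,\tau}\cap D_\tau=\varpi^{j}L_D$, the orthogonal projections $V_\tau\to W_\tau$ and $V_\tau\to D_\tau$ both induce isomorphisms out of $L_{V,\tau}/\bigl(p\oplus(L_{V,\tau}\cap D_\tau)\bigr)$, onto $p^{\vee}/p$ and onto $\varpi^{-j}L_D/\varpi^{j}L_D$ respectively; since the latter is cyclic of length $2j$, so is $p^{\vee}/p$, there is a unique duality chain $p=p_0\subset p_1\subset\dots\subset p_{2j}=p^{\vee}$ with $p_i^{\vee}=p_{2j-i}$, its self-dual middle term $p_j$ represents the convex projection $\pr_{W_\tau}(L_{V,\tau})$, and a direct check in the tree metric gives $a=\dist([L_{V,\tau}],[p_j])=j$, while $b=\dist([p_j],P_0)$. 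Surjectivity of $\inv_\tau$ then follows by building, for each $(a,b)$, an explicit self-dual $L_{V,\tau}$ from one apartment of $\cB(W_\tau)$ together with a single transverse direction in $\cB(V_\tau)$. Injectivity --- the crux of (i) --- says $K_{W,\tau}$ acts transitively on self-dual $L_{V,\tau}$ with prescribed $(j,b)$; I would prove it by a Witt-type argument, matching the data $\bigl(p\subset W_\tau,\ \text{a generator of }p^{\vee}/p\bigr)$ of two such lattices by an isometry of $W_\tau$ that can be taken inside $K_{W,\tau}=\U(L_W^0)$, the last point using that over the unramified extension such partial isometries of lattices extend.

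For part (ii), by the reciprocity law for the geometric connected components of $\Sh_{K_{\Hbf}}(\Hbf,Y)$ recalled in Section~\ref{par:concomp}, applied place by place through local class field theory, the completion at $\tau$ of $E(\xi)$ (for $\xi=\cZ_K(g)$) corresponds to the subgroup of $E_\tau^{\times}$ which, modulo $F_\tau^{\times}$ and under $E_\tau^{\times}/F_\tau^{\times}\isom\U(1)(F_\tau)$, is $\det\bigl(\Stab_{H_\tau}(L_{V,\tau},L_{W,\tau})\bigr)$, the image under $\det\colon\U(W_\tau)\to\U(1)(F_\tau)$ of the stabilizer of the pair; and $\cO_n^{\times}F_\tau^{\times}/F_\tau^{\times}$ corresponds to the group $\{z\in\cO_{E_\tau}^{1}:z\equiv1\pmod{\varpi^{n}}\}$ of norm-one units congruent to $1$, so $\bc_\tau=\varpi^{n}$ with $n$ determined by $\det(\Stab_{H_\tau})=\{z\equiv1\pmod{\varpi^{n}}\}$. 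After normalizing $L_{W,\tau}=L_W^0$, one computes $\Stab_{H_\tau}(L_{V,\tau},L_W^0)=\{g\in\U(L_W^0):gp=p,\ g|_{p^{\vee}/p}=\mathrm{id}\}$; note that $gp=p$ forces $g$ to fix $p_a$, hence the whole geodesic from $P_0$ to $[p_a]$. Choosing a basis of $W_\tau$ adapted simultaneously to $L_W^0$, to that geodesic (of length $b$), and to the cyclic chain $p\subset p^{\vee}$ (of length $2a$), one writes out the unitary matrices in $\Stab_{H_\tau}$ and computes their determinants.

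The main obstacle is the interplay, in part (ii), between two competing congruences. Acting trivially on $p^{\vee}/p\isom\cO_{E_\tau}/\varpi^{2a}$ naively demands a congruence modulo $\varpi^{2a}$ on $\det$, but being anchored at $L_W^0$ couples the diagonal and off-diagonal matrix entries so that only a congruence modulo $\varpi^{a}$ survives; separately, fixing the geodesic to $[p_a]$ forces a congruence modulo $\varpi^{2b}$. A careful but elementary matrix computation over $\cO_{E_\tau}$ shows these combine exactly to $\det(\Stab_{H_\tau})=\{z\in\cO_{E_\tau}^{1}:z\equiv1\pmod{\varpi^{\min\{a,2b\}}}\}$, which yields \eqref{eq:localcond}; the parallel (and no less hands-on) difficulty in part (i) is establishing, via Hermitian lattices over the unramified quadratic extension, the transitivity of $K_{W,\tau}$ needed for injectivity.
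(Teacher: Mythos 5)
Your overall strategy is sound and reaches the same reduction as the paper --- identify $G_\tau/K_\tau$ with pairs of self-dual lattices, prove (i) as a transitivity statement after normalizing $L_{W,\tau}$, and prove (ii) by computing the image of $\Stab_{H_\tau}(L_{V,\tau},L_{W,\tau})$ under $\det\colon H_\tau\to\U^1(F_\tau)$ and matching it against the filtration $U^1(n)$ --- but your organization of the two computations is genuinely different. The paper works inside $\cB(V_\tau)$: it chooses an apartment $\cA\subset\cB(W_\tau)$ through $\pr_{W_\tau}(L_{V,\tau})$ and $L_{W,\tau}$ and a second apartment $\widetilde{\cA}$ through the three relevant vertices meeting $\cB(W_\tau)$ in a half-line, shows the change-of-basis matrix $S$ is unipotent with \emph{unit} entries $\beta,\gamma$ (Lemma~\ref{lem:Scong}), then obtains the containment $\det(\Stab)\subseteq U^1(\min\{a,2b\})$ by comparing entries of $SA=BS$ and the equality by exhibiting explicit stabilizing elements in the two regimes $a\le 2b$ and $a>2b$; part (i) is proved by transitivity of $H_\tau$ on special apartments (Lemma~\ref{lem:transitive}). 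You instead encode $L_{V,\tau}$ by the gluing data $\bigl(p=L_{V,\tau}\cap W_\tau,\ \phi\colon p^\vee/p\riso q^\vee/q\bigr)$ with $q=\varpi^aL_D$; your identifications ($p^\vee/p$ cyclic of length $2a$, the middle self-dual term of the chain equal to $\pr_{W_\tau}(L_{V,\tau})$, and $a$ equal to the $D$-valuation $j$) are correct, and they yield the clean intrinsic description $\Stab_{H_\tau}(L_{V,\tau},L^0_W)=\{g\in\U(L^0_W)\colon gp=p,\ g\equiv\mathrm{id}\ \text{on}\ p^\vee/p\}$, which the paper never states and which makes the role of the chain transparent.

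Two places where your plan underestimates the remaining work. First, in (i) the ``Witt-type'' step is not the standard extension of an isometry between sublattices: you must match not only $p'$ with $p$ inside $L^0_W$ but also the prescribed gluing, i.e.\ you need the image of $\Stab_{K_{W,\tau}}(p)$ in the automorphism group of the glue $(p^\vee/p,\ \text{induced form})$ to be the full group of norm-one units of $\cO_{E_\tau}/\varpi^{2a}$; this surjectivity is true, but it is exactly the kind of explicit unitary-matrix computation you are hoping to outsource, and it is the counterpart of the paper's transitivity on special apartments. Second, in (ii) no Witt basis can be ``adapted simultaneously'' to $L^0_W$, the geodesic and the chain: cyclicity of $p^\vee/p$ forces the direction of $p$ at the middle vertex to be anisotropic, so $p$ is never diagonalized by a Witt basis of $\pr_{W_\tau}(L_{V,\tau})$ --- this is precisely why the paper introduces the second apartment $\widetilde{\cA}$ and the unit entries of $S$. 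Relatedly, be careful with how the two congruences ``combine'': imposing $\det\equiv1\ (\varpi^{a})$ and $\det\equiv1\ (\varpi^{2b})$ simultaneously would give $\max\{a,2b\}$, not $\min\{a,2b\}$; the correct mechanism, visible in the paper's entry-by-entry comparison (e.g.\ the $(1,2)$-entry giving $v(x_{11}-1)\ge\min\{a,2b\}$ directly), is that the glue-triviality and anchoring conditions constrain the relevant entries only modulo the ideal $(\varpi^{a})+(\varpi^{2b})$, and the reverse inclusion still requires constructing explicit elements, done separately in the two cases.
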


%
% DIMITAR: this is a good remark, but may want to remove it to save space.
%
\comment{
\begin{rem}
We expect that a similar statement holds for general $n$; yet, the proof is technically more involved. The main 
obstruction is that, although the convex projection is still well-defined (using, e.g., $p$-adic norms similarly to \cite{goldman-iwahori}), the buildings are higher-dimensional and more difficult to visualize. For these higher-dimensional buildings, it is not even clear which distance function should be used, although a reasonable candidate is the \emph{gallery distance}.  
\end{rem}
}

%\begin{rem}
%The interpretation of the Galois action on the special cycles in $\cZ_K(\G, \Hbf)$ in terms of Bruhat--Tits theory in the above theorem and the well-known fact that Hecke operators can be viewed as adjacency operators already indicates that one should expect a precise formula relating the Hecke and the Galois actions on the spaces of 
%cycles. 
%\end{rem}

\paragraph{Blasius--Rogawski congruence relation.} 
Blasius and Rogawski \cite{blasius-rogawski:zeta} formulate the \emph{congruence relation} conjecture generalizing the classical Eichler--Shimura relation for modular curves by providing an explicit polynomial, the Hecke polynomial, annihilating the geometric Frobenius $\Fr_\tau$ acting on the $\ell$-adic \'etale cohomology (we normalize so that geometric Frobenii correspond to uniformizers under the Artin map). 

In our setting, let $p$ be the rational prime below the allowable inert place 
$\tau$. For $\star \in \{\varnothing, V, W\}$, let $\cH_{\star, p} = \cH(G_{\star, p}, K_{\star, p})$ be the local Hecke algebra at $p$ where $G_{\star, p} = \G_{\star}(\Q_p)$. 
If $\star \in \{V, W\}$ and under the stronger assumption that the whole $\ds K_{\star, p} = \prod_{\tau \mid p} K_{\star, \tau}$ $\tau \mid p$ is hyperspecial,  
Blasius and Rogawski define\footnote{For $\GL_2$, if $\tau = p \ne \ell$, the Hecke polynomial is simply the polynomial $H_p(z) = X^2 - T_p X + p$ and the classical Eichler--Shimura relation $T_p = \Fr_p + \textbf{Ver}_p$ is equivalent to the statement that $H_p$ vanishes on $\Fr_p$ acting on the $\ell$-adic Tate module $T_\ell \cE$ of an elliptic curve $\cE$.} (following Langlands) a polynomial 
$H_{\star, \tau}(z) \in \cH_{\star, p}[z]$ representation-theoretically out of the the Shimura datum (see \cite[\S 6]{blasius-rogawski:zeta} for the definition) that depends on $\tau$
but whose coefficients are in $\cH_{\star, p}$ (the algebra (under convolution) of $K_{\star, p}$-bi-invariant locally constant functions on 
$G_{\star, p}$). 
%The setting of Blasius--Rogawski does not include the precise case of $\star = \varnothing$ (i.e., stating the conjecture for the product Shimura variety $\Sh_K(\G, X)$ since $K_\tau$ being a product of hyperspecial subgroups is not a hyperspecial subgroup itself). 

In Section~\ref{subsec:comphecke} we define a polynomials $H_{\star, \tau}(z)$ for $\star \in \{\varnothing, V, W\}$ with coefficients in $\cH_{\star, \tau}$ under the assumption that $\tau$ is allowable and inert (thus, not necessarily assuming that $K_{\star, p}$ is hyperspecial, but only $K_{\star, \tau}$). Under the natural injection
$$
\ds \cH_{\star, \tau} \hra \cH_{\star, \tau} \otimes \bigotimes_{\substack{\tau' \mid p \\ \tau' \ne \tau}} \cH_{\star, \tau'} = \cH_{\star, p}, \qquad t \mapsto t \otimes 1, 
$$ 
our polynomials $\cH_{\star, \tau}$ map to the polynomials defined by Blasius--Rogawski, thus, justifying the use of the same notation and also, the fact that we can view their coefficients in $\cH_{\star, \tau}$. 

Let $\ell$ be a prime such that $\tau \nmid \ell$. Let $\overline{\Sh_K(\G, X)}$ denote the Baily--Borel (minimal) compactification. Following Blasius and Rogawski \cite[p.33]{blasius-rogawski:zeta}, we state the the congruence relation on cohomology (the weaker conjecture) to include the case of the product Shimura variety:  

%that if $\pi_f$ is a cohomological automorphic representation such that $\pi_\tau$ is unramified then the specialization of $H_\tau(z)$ at $\pi$ annihilates $\Phi_{\tau} \in \Gal(E_\tau^{\ur} / E_{\tau})$ (the latter acting on the $\pi_f$-equivariant part $\ds \H^* \left (\overline{\Sh_K(\G, X)}, \Q_\ell \right )[\pi_f]$ as $\pi_\tau$ is unramified). Using recent results of Koskivirta \cite{koskivirta:congruence}, we establish the congruence relation in our particular case: 

\begin{conj}[(Congruence relation on cohomology)]\label{thm:congrel}
%Let $\pi_f$ be a cohomological automorphic representation of $\G(\Af)$ occurring in the intersection cohomology $\ds \IH^* \left (\overline{\Sh_K(\G, X)}_{\Qbar}, \Q_\ell \right )$
Let $\tau$ be an allowable inert place and let $\ell$ be a prime such that $\tau \nmid \ell$. 
%Assume that the local representation $\pi_\tau$ is unramified (i.e., $G_{V, \tau}$ is split over an unramified extension of $F_\tau$) and let $H_{\tau}(z, \pi_\tau)$ be the specialization of the Hecke polynomial $H_\tau(z)$ at $\pi_\tau$. Then $H_\tau(\Phi_\tau, \pi_\tau)$ vanishes on $\IH^*\left (\overline{\Sh_K(\G, X)}_{\Qbar}, \Q_\ell \right )[\pi_f]$. 
Then $\star \in \{\varnothing, V, W\}$, $\IH^*\left (\overline{\Sh_{K_{\star}}(\G_{\star }, X_{\star})}_{\Qbar}, \Q_\ell \right )$ is unramified at $\tau$ and 
$$
H_{\star, \tau}(\Fr_\tau) = 0, 
$$
where the last equality is considered in $\ds \End_{\Q_\ell} \left (\IH^*\left (\overline{\Sh_{K_{\star}}(\G_{\star }, X_{\star})}_{\Qbar}, \Q_\ell \right ) \right)$. 
%For any $\star \in \{\varnothing, V, W\}$, let $\pi_{\star, f}$ be a cohomological automorphic representation of $\G_{\star}$ and assume that $\pi_{\star, \tau}$ is unramified. 
%Then the specialization $H_{\star, \tau}(\Fr_\tau; \pi_{\star, \tau})$ of the Hecke polynomial $H_{\star, \tau}(\Fr_\tau)$ at $\pi_{\star, \tau}$ vanishes on the $\pi_{\star, f}$-equivariant part 
%$\IH^*\left (\overline{\Sh_{K_{\star}}(\G_{\star }, X_{\star})}_{\Qbar}, \Q_\ell \right )[\pi_{\star, f}]$ of the intersection cohomology ring\footnote{We use intersection cohomology here since the Baily--Borel compactification $\overline{\Sh_{K_{\star}}(\G_{\star}, X_{\star})}$ is not necessarily smooth.} $\IH^*\left (\overline{\Sh_{K_{\star}}(\G_{\star }, X_{\star})}_{\Qbar}, \Q_\ell \right )$.
\end{conj}

%\begin{rem}
%Note that the geometric $\Phi_\tau$ acts on the $\pi_f$-equivariant part $\ds \IH^* \left (\overline{\Sh_K(\G, X)}_{\Qbar}, \Q_\ell \right )[\pi_f]$ as $\pi_\tau$ is unramified.
%\end{rem}

\begin{rem}
Koskivirta \cite{koskivirta:journal} verifies a related conjecture in the case of PEL-type unitary Shimura varieties closely related to ours in the case $F = \Q$ and $\star \in \{V, W\}$. Instead of working on intersection cohomology, he works on a certain moduli space for $p$-isogenies. \emph{A priori}, it is not automatic how one can pass to  cohomology (although the latter is known to experts). Yet, a stronger form of the congruence relation (on cycles) is needed for the application to Euler systems.  
\end{rem}

Our first contribution is to deduce the congruence relation for the product of two Shimura varieties from the congruence relation of the two factors. 

\begin{thm}\label{thm:congrelprod}
Let $(\G_1, X_1)$ and $(\G_2, X_2)$ be two Shimura data and let $(\G, X)$ be the product datum (i.e., $\G = \G_1 \times \G_2$ and $X = X_1 \times X_2$). 
%Let $\pi_1$ and $\pi_2$ be two cohomological automorphic forms on $\G_1(\Af)$ and $\G_2(\Af)$, respectively. 
%Suppose that Conjecture~\ref{thm:congrel} holds for both $(\G_1, X_1)$ and $\pi_1$ as well as for $(\G_2, X_2)$ and $\pi_2$. Then Conjecture~\ref{thm:congrel} holds for $(\G, X)$ and $\pi_1 \boxtimes \pi_2$.
Suppose that Conjecture~\ref{thm:congrel} holds for both $(\G_1, X_1)$ and $(\G_2, X_2)$. Then Conjecture~\ref{thm:congrel} holds for $(\G, X)$. 
\end{thm}

\noindent As an immediate corollary, we obtain the following: 

\begin{cor}\label{cor:product}
%If Conjecture~\ref{thm:congrel} holds for $\Sh_{K_{\star}}(\G_{\star}, X_{\star})$ and $\pi_{f, \star}$ for $\star \in \{V, W\}$ then it holds for $\Sh_K(\G, X)$ and $\pi_f = \pi_{f, V} \boxtimes \pi_{f, W}$. 
Suppose that Conjecture~\ref{thm:congrel} holds for $\Sh_{K_{\star}}(\G_{\star}, X_{\star})$ for $\star \in \{V, W\}$. Then it holds for $\Sh_K(\G, X)$. 
\end{cor}

\noindent The second contribution is deducing the conjecture for the non-PEL type Shimura variety $(\G_\star, X_\star)$ from the PEL-type one $(\widetilde{\G}_{\star}, \widetilde{X}_{\star} )$. We do this via a slightly more general argument for an arbitrary Shimura variety. 

\begin{thm}\label{thm:centraltwist}
Let $(\G_0, X_0)$ be a Shimura datum and let $\omega \colon \Gm \ra \Zbf_{\G_0} \hra \G_0$ be a central co-character. 
If Conjecture~\ref{thm:congrel} holds for $(\G_0, \omega X_0)$ then it holds for $(\G_0, X_0)$. 
%Then the field of definition of $(\G_0, \omega X_0)$ is ... 
%\dimnote{1) Quotient of the group by a central torus; 2) Modification of the domain by a central character, etc.}
\end{thm}

\noindent To deduce the conjecture for our Shimura datum $(\G, X)$ in the case $F = \Q$, we use the work of Koskivirta together with Theorem~\ref{thm:centraltwist} to first deduce the conjecture for both $(\G_V, X_V)$ and $(\G_W, X_W)$ and then use Corollary~\ref{cor:product}. 

\paragraph{Distribution relations.}
Kolyvagin's Euler systems method \cite{kolyvagin:euler_systems, rubin:book} models local $L$-factors algebraically via cohomological data.  
%coming from either special units (Kato's euler system), special points or special cycles). 
Constructing Euler systems amounts to proving certain norm-compatibility relations (distribution relations) on a certain space of special objects such as special units in the case of Kato's Euler system or special cycles in the case of Heegner points. 
%These relations are commonly referred to as distribution relations. 
For elliptic curves, it is known that the classical Heegner points $\{x_c\}$ (see \cite{gross:kolyvagin}) on the modular curve $X_0(N)$ for the imaginary quadratic field $E$ satisfy the property that if $p \nmid c$ is inert in $E$ then 
\begin{equation}\label{eq:basic}
T_p (x_c) = \Tr_{E[cp ] / E[c]} (x_{cp}) \in \Z[\CM],  
\end{equation}
where $\CM$ indicates the set of points of $X_0(N)$ having complex multiplication by $E$.   
This equality is proved in \cite[Prop.3.7(i)]{gross:kolyvagin} and is known as a distribution relation for Heegner points. 
Together with the Eichler--Shimura relation \cite[Prop.3.7(ii)]{gross:kolyvagin}, one gets an Euler system and derived cohomology classes that, via general global duality arguments, yield upper bounds on Selmer groups. 
Although rather simple, \eqref{eq:basic} is not very convenient when generalizing to Shimura varieties for higher-rank groups. An alternative way of restating the above equation is as follows: 
\begin{equation}\label{eq:gl2distrel}
(\Fr_p^2 - T_p \Fr_p + p) \Fr_p (x_c) = \Tr_{E[cp] / E[c]} (\Fr_p^{-1} (x_c) - (x_{cp})).  
\end{equation}
This is more convenient as the left-hand side is simply the operator that is the value of the Hecke polynomial $H_p(z) = z^2 - T_p z + p$ at $\Fr_p$ acting on an unramified CM point whereas the right-hand side is an exact trace. 
%In our case, $\Fr_p$ acts trivially on the cycle $\xi_c$ as the finite place $\tau$ splits completely in $E[c]$. 
%Moreover, if $\ell$ splits in $E$ then the distribution relations can be rewritten as 
%\begin{equation*}
%(\Fr_{\mathfrak L}^2 - T_\ell \Fr_{\fL} + \ell) \Fr_{\mathfrak L}^{-1} y_{c} + (\Fr_{\mathfrak L^*}^2 - T_\ell \Fr_{\fL^*} + \ell) \Fr_{\mathfrak L^*}^{-1} y_{c}= \Tr_{E[c \ell ] / E[c]} 
%(\Fr_{\fL}^{-1} y_{c} + \Fr_{\fL^*}^{-1} y_{c} - 2y_{c \ell} ),  
%\end{equation*} 
%since $\Fr_{\fL^*} = \Fr_{\fL}^{-1}$. 
%Here, the Hecke polynomial is $H_\tau(z) = z^2 - T_\ell z + \ell$ and $y_c$ is the Heegner point of conductor $c$ defined in \cite{gross:kolyvagin}. 
In fact, the analogue of the pair $(\G, \Hbf)$ of algebraic groups in this case is $(\GL_2, E^\times)$ (see~\cite{gross:heegner-representation} for the precise analogy from the point of view of the Gross--Prasad restriction problems).  

In the case of unitary groups, both the Hecke algebra $\cH(\G, K)$ and the Galois group $\Gal(E^{\ab} / E)$ act on $\Z[\cZ_K(\G, \Hbf)]$ as explained in Section~\ref{sec:distrel}. Given an element $\xi \in \Z[\cZ_K(\G, \Hbf)]$, let $E(\xi)$ be the smallest abelian extension of $E$ such that all special cycles in $\Supp(\xi)$ are defined over $E(\xi)$. Moreover, let $\bc_\tau(\xi)$ denote the local conductor of $E(\xi)$ at $\tau$. Using Theorem~\ref{thm:galois} expressing the Galois action in terms of the distance function on the building, we prove the following relation between the two actions: 

%\begin{thm}{(Horizontal Distribution Relations)}\label{thm:horizdistrel2}
%Let $\xi \in \Z[\cZ_K(\G, \Hbf)]$ and 
%%and let $E(\xi)/E$ be the smallest abelian extension such that all special cycles in the support of $\xi$ are defined over $E(\xi)$. 
%let $\tau$ be a place of $F$ that is inert in $E$. 
%%\dimnote{What do I need to assume to conclude that the unique $\tau$ of $E$ splits completely in $E(\xi)$.} 
%Assume that $E(\xi)_\tau = E_\tau$ and assume that $K = K_\tau \times K^{(\tau)}$ for $K_\tau = K_{V, \tau} \times K_{W, \tau} \subset G_\tau$ where both $K_{V, \tau} \subset G_{V, \tau}$ and $K_{W, \tau} \subset G_{W, \tau}$ are hyperspecial maximal compact subgroups. There exists an element $\xi(\tau) \in \Z[\cZ_K(\G, \Hbf)]$ such that $\tau$ is totally ramified in $E(\xi(\tau))$, the local conductor $\bc_\tau(\xi(\tau)) = \varpi^2$ and   
%\begin{equation}\label{eq:unitarydistrel}
%H_{\tau} (\Fr_\tau) \xi = \Tr_{E(\xi(\tau))_\tau / E(\xi)_\tau} (\xi(\tau)). 
%\end{equation}
%Here, $E(\xi(\tau))_\tau$ denotes the completion of $E(\xi(\tau))$ at the unique place of $E(\xi)$ above $\tau$ (note that 
%$E(\xi(\tau))_\tau$ is a totally ramified extension of $E(\xi)_\tau$ of degree $q(q+1)$ where $q$ is the residue characteristic of $F$ at the place $\tau$). 
%\end{thm}

\begin{thm}{(Horizontal Distribution Relations)}\label{thm:horizdistrel}
Let $\tau$ be an allowable inert place of $F$ and let $\xi \in \cZ_K(\G, \Hbf)$ be a special cycle with $\inv_\tau(\xi) = (0, 0)$. Let $E^\times \left ( \cO_{E_\tau}^\times \times N^{(\tau)} \right ) \subset \widehat{E}^\times$ be the norm subgroup corresponding to the abelian extension $E(\xi) / E$ via class field theory, where  
$\ds N^{(\tau)} \subset \left ( \widehat{E}^{(\tau)} \right)^\times$. There exists an element $\xi' \in \Z[\cZ_K(\G, \Hbf)]$ whose field of definition $E(\xi')$ has an associated norm subgroup 
$E^\times \left ( \cO_{2}^\times \times N^{(\tau)} \right ) \subset \widehat{E}^\times$ such that the following distribution relation holds: 
\begin{equation}\label{eq:unitarydistrel}
H_{\tau} (\Fr_\tau) \xi = \Tr_{E(\xi') / E(\xi)} (\xi'). 
\end{equation}
\end{thm}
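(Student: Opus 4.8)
The plan is to reduce \eqref{eq:unitarydistrel} to an identity in the free abelian group on the local lattice data at $\tau$, and then to prove that identity by combining ``elementary'' distribution relations — the analogues of \eqref{eq:basic} — with the explicit shape of the Hecke polynomial $H_\tau(z)$ computed in Section~\ref{subsec:comphecke}. I work throughout with an allowable inert place $\tau$, the case governed by Theorem~\ref{thm:galois} (the split and tamely ramified cases being analogous, once the corresponding local conductor formulae are in hand).

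First I would localize. The coefficients of $H_\tau(z)$ lie in the local Hecke algebra $\cH(G_\tau, K_\tau) \cong \cH(G_{V,\tau}, K_{V,\tau}) \otimes \cH(G_{W,\tau}, K_{W,\tau})$ and act on $\Z[\cZ_K(\G, \Hbf)]$ through correspondences supported at $\tau$; moreover, by the Shimura reciprocity law for the Galois action on the connected components of the Shimura variety attached to $\Hbf$ — the same input that underlies Theorem~\ref{thm:galois} — the Frobenius $\Fr_\tau$ alters only the $\tau$-component of a cycle, and it preserves $\inv_\tau$, since it acts by translation by an element of $H_\tau = \U(W)(F_\tau)$, which is an isometry both of $\cB(V_\tau)$ and of its subtree $\cB(W_\tau)$. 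Hence both sides of \eqref{eq:unitarydistrel} are determined by their images as $\Z$-linear combinations of $H_\tau$-orbits of lattice pairs $(L_{V,\tau}, L_{W,\tau})$, with the away-from-$\tau$ data and the factor $N^{(\tau)}$ held fixed. Using the bijection $\inv_\tau$ of Theorem~\ref{thm:galois}(i) I would index these by pairs $(a,b)$, and use Theorem~\ref{thm:galois}(ii) to attach to each pair the abelian extension of $E_\tau$ of local conductor $\varpi^{\min\{a,2b\}}$ over which the corresponding cycles are defined; this is precisely the bookkeeping that lets one read $\Tr_{E(\xi')/E(\xi)}$ as a sum over the explicit Galois coset cut out by $\cO_2^\times \subset \cO_{E_\tau}^\times$, once $\xi'$ has been identified.

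Next, since a Hecke operator is an adjacency (averaging) operator on the relevant tree, the basic generators of $\cH(G_{V,\tau}, K_{V,\tau})$ and of $\cH(G_{W,\tau}, K_{W,\tau})$ send a cycle to the sum of the neighbours of $L_{V,\tau}$ in $\cB(V_\tau)$, resp.\ of $L_{W,\tau}$ in $\cB(W_\tau)$. Starting from $\xi$, where $L_{V,\tau} = L_{W,\tau}$ is a single hyperspecial vertex (since $\inv_\tau(\xi) = (0,0)$), I would follow these walks, group the neighbours according to their common field of definition (via the reciprocity law and Theorem~\ref{thm:galois}(ii)), and recognise each such group as a trace of cycles of strictly larger local conductor; this yields the analogue of \eqref{eq:basic} for each generator, with ``lower-order'' correction terms coming from those neighbours for which $\min\{a,2b\}$ fails to increase (the phenomenon absent from \eqref{eq:basic} itself because there are no $\mathfrak p$-isogenies at an inert prime). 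Substituting these elementary relations into $H_\tau(\Fr_\tau) = \sum_i c_i\,\Fr_\tau^{\,i}$ applied to $\xi$ — a bounded walk whose length is governed by $\deg H_\tau = 6$, the product of the dimensions $\binom{3}{1} = 3$ and $\binom{2}{1} = 2$ of the minuscule representations attached to the Shimura cocharacters of $\U(V)$ and $\U(W)$ — I would then verify that, with the explicit coefficients $c_i \in \cH(G_\tau, K_\tau)$ of $H_\tau$ in place, every contribution at local conductor $\varpi^0$ or $\varpi^1$ either cancels or reassembles (after collecting $\Fr_\tau$-orbits) into exact traces, so that what remains is exactly $\Tr_{E(\xi')/E(\xi)}(\xi')$ for $\xi'$ the combination of cycles at local conductor $\varpi^2$ — those with $\inv_\tau = (a,b)$ satisfying $\min\{a,2b\} = 2$ — dictated by the computation.

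I expect the main obstacle to be this final cancellation. Because the convex projection $\pr_{W_\tau}$ couples the two trees, the walk computing $H_\tau(\Fr_\tau)\xi$ is genuinely two-dimensional, and controlling all of its layers — in particular singling out the correct combination $\xi'$ — is where the precise coefficients of $H_\tau$ are indispensable and where the bulk of the (unavoidably somewhat lengthy) verification lies. A secondary difficulty is fixing normalizations: one must identify the translation of $\cB(V_\tau)$ and $\cB(W_\tau)$ induced by $\Fr_\tau$ with the correct one (geometric versus arithmetic Frobenius, and the exact reflex cocharacter). Here the fact that $\xi$ has good reduction at $\tau$ when $\inv_\tau(\xi) = (0,0)$ — the level at $\tau$ being hyperspecial for both $\G$ and $\Hbf$ — together with the standard compatibility of the Galois Frobenius with the geometric Frobenius on the special fibre of Kisin's integral model, provides a useful check; the shape $H_\tau(\Fr_\tau)\xi = \Tr(\xi')$ of \eqref{eq:unitarydistrel} may then be viewed as a special-cycle analogue, over the abelian extensions of $E$, of the Blasius--Rogawski congruence relation $H_\tau(\Fr_\tau) = 0$ on the special fibre.
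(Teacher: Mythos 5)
Your overall strategy is the paper's: localize at $\tau$, expand $H_\tau(\Fr_\tau)\xi = H_\tau(1)\xi$ through the adjacency action of the local Hecke operators on the product of trees, and use Theorem~\ref{thm:galois} together with Proposition~\ref{prop:orbits} to reorganize the resulting sum of hyperspecial vertices into an exact trace. However, two concrete points in your sketch would not survive the verification as you describe it. First, the final structure is not one of cancellation: in the actual expansion of $H_\tau(1)\xi_{0,0}$ the contributions of invariants $(0,0),(0,1),(1,0),(0,2),(2,0),(0,3)$ (local conductor $\varpi^0$) and $(1,1),(1,2)$ (conductor $\varpi$) all persist with nonzero coefficients. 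The single-trace form of \eqref{eq:unitarydistrel} is obtained because these coefficients turn out to be divisible by $q(q+1)$ (and the conductor-$\varpi$ terms occur in full Galois orbits with the extra factor $q$), so that they can be folded into $\Tr_{E(\xi')/E(\xi)}$ of an element $\xi'$ whose support contains cycles of every local conductor $\leq \varpi^2$, the only conductor-$\varpi^2$ constituent being $\xi_{2,1}$; the field $E(\xi')$ still has local conductor $\varpi^2$ because the conductor is the least common multiple over the support. Taking $\xi'$ to be only the cycles with $\min\{a,2b\}=2$, as you propose, makes the identity false, and checking the divisibilities just described is precisely where the explicit coefficients of $H_\tau$ from Theorem~\ref{thm:hecke} are used — they cannot be bypassed by a qualitative cancellation argument.

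Second, your step ``group the neighbours according to their common field of definition and recognise each such group as a trace'' presupposes that every Galois conjugate of a given cycle appears with the same multiplicity, and this is exactly the point one cannot see on the building: distinct hyperspecial vertices of the same invariant may or may not descend to the same cycle in $\cZ_K(\G,\Hbf)$, so the multiplicities are not directly computable vertex by vertex. The paper supplies the missing mechanism: the Hecke action on $\Z[\cZ_K(\G,\Hbf)]$ commutes with the Galois action, so the left-hand side is fixed by $\Gal(E^{\ab}/E(\xi))$, which forces the multiplicity function $x \mapsto m_{i,j}(x)$ to be constant on each orbit $\{\xi_{i,j}^{\Art_{E_\tau}(x)}\}$; only then do the grouped terms become exact traces. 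Your first paragraph records that $\Fr_\tau$ acts through $H_\tau$ and preserves $\inv_\tau$, but that alone does not yield the equidistribution of multiplicities, so this equivariance argument needs to be made explicit for the reassembly step to be justified.
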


\begin{rem}
Here, $E(\xi')_\tau$ denotes the completion of $E(\xi')$ at the unique place of $E(\xi)$ above $\tau$ (note that 
$E(\xi')_\tau$ is a totally ramified extension of $E(\xi)_\tau = E_\tau$ of degree $q(q+1)$ where $q$ is the order of the residue field of $F$ at the place $\tau$). In other words, the local conductor $\bc_\tau(\xi') = \varpi^2$. 
\end{rem}

\begin{rem}
%The left-hand side of \eqref{eq:unitarydistrel} is the same as $H_\tau(1) \xi$ since $\Fr_\tau$ acts trivially on the cycle $\xi$ as the finite place $\tau$ splits completely in $E(\xi)$. 
We note the analogy of $\H_\tau(\Fr_\tau)$ with the left-hand side of \eqref{eq:gl2distrel} except that the above theorem increases the local conductor by 2 as opposed to 1 in the $\GL_2$-case. This is not a problem for the arithmetic applications as one can always define norm-compatible cycles over an extension of local conductor one by taking traces of $\xi(\tau)$.  
\end{rem}

\begin{rem}
For arithmetic application to Iwasawa theory, one also needs \emph{vertical} distribution relations where the conductors of the ring class extensions vary $p$-adically. Recently, such vertical relations have been established in \cite{boumasmoud-brooks-jetchev} using the cycles $\cZ_K(\G, \Hbf)$ and Theorem~\ref{thm:galois}. We expect that our Euler system can be used to prove new results towards one divisibility of the anticyclotomic main conjecture of Iwasawa theory for the relevant Galois representations.
\end{rem}

\begin{rem}
The construction of Euler systems for higher rank groups has been initiated by Cornut \cite{cornut:normes1, cornut:normes2} for the case of $\U(n) \subset \SO(2n+1)$. Our setting matches the setting of Gan, Gross and Prasad \cite[\S 27]{gan-gross-prasad} where there is already an explicit Gross-Zagier type conjecture.  
\end{rem}

\subsection{Outline of the article}
We introduce the setting for unitary groups, Hermitian lattices, the relevant Shimura data, Shimura varieties and special cycles in detail in Section~\ref{sec:shimura-cycles}. We prove Theorem~\ref{thm:galois} in Section~\ref{sec:galois-cycles} by studying the action 
of the small group $H_\tau$ on the product $\cB(V_\tau) \times \cB(W_\tau)$ of the buildings for the groups $G_{V, \tau}$ and $G_{W, \tau}$. In Section~\ref{sec:heckepol}, we compute the Hecke polynomial for our unitary groups using a method of Cornut and Koskivirta \cite{koskivirta:journal} reducing the computation to local combinatorics on the Bruhat--Tits buildings via \emph{canonical retraction maps} on buildings. In Section~\ref{sec:cong-blas-rog}, we prove Theorems~\ref{thm:congrelprod} and~\ref{thm:centraltwist} and combine these with the recent results of J.-S. Koskivirta in the case $F = \Q$ to get Conjecture~\ref{thm:congrel} in our case. Finally, we prove Theorem~\ref{thm:horizdistrel} in Section~\ref{sec:distrel} via a local combinatorial argument using Theorem~\ref{thm:galois}.  

%\paragraph{The Gan--Gross--Prasad restriction problems.} In Section~\ref{sec:ggp}, we introduce the Gross--Prasad restriction problems in the context of unitary groups of isometries, we state the local conjectures and then state a Gross--Zagier type conjecture that is a work in progress by W. Zhang. We do explicit computations and discussions on $L$-packets and Langlands parameters for unitary groups in two and three variables. We finish the section by explaining how to read off the local root numbers for the distinguished representation in a given $L$-packet (distinguished in the sense of Gan, Gross and Prasad) from the discrete global parameter. 

%\paragraph{Galois representations attached to automorphic forms on unitary groups.} Section~\ref{sec:galrep} discusses compatible families of $\lambda$-adic Galois representations attached to automorphic forms. We 
%recall the original construction of Blasius--Rogawski \cite{blasius-rogawski:tate} following by the general constructions due to Chenevier--Harris \cite{harris-chenevier} and Shin \cite{shin:galois}. We also discuss the $L$-functions and $\eps$-factors for these representations. \dimnote{May need to add information about Matsushima's formula.}

%\paragraph{Bloch--Kato--Beilinson conjectures.} We state precisely the higher-dimensional analogues of the Birch and Swinnerton-Dyer conjectures - the Bloch--Kato--Beilinson conjectures (Conjecture~\ref{conj:bloch-kato}) in Section~\ref{sec:mainthm}. Here, we give a statement of the main theorem of our paper, Theorem~\ref{thm:main}. 

\comment{
\paragraph{Shimura varieties for unitary groups.} The relevant Shimura data, Shimura varieties and special cycles are introduced in detail in Section~\ref{sec:shimura-cycles}. Here, we explain how to cohomologically trivialize the cycles using Chow--K\"unneth projectors and how to obtain cohomology classes in the relevant Selmer groups. We note that our cohomological trivialization is non-conjectural for the groups $\U(1,1) \subset \U(2,1) \times \U(1,1)$, but relies on deep conjectures about the existence of these projectors for the general case $\U(n-1, 1) \subset \U(n,1) \times \U(n-1, 1)$

\paragraph{Galois action on special cycles via Bruhat--Tits theory.} 
In Section~\ref{sec:galois-cycles}, we state and prove the main theorem describing the action of the Galois group on the space of cycles. The conductor (giving the ring class field of $E$ over which a particular cycle is defined) is computed locally using distance functions on the Bruhat--Tits buildings. The adjunction formula in Theorem~\ref{thm:localcond} is established using the action 
of the small group on the product of the buildings of the two groups. Having such a formula is essential for making the link between the Galois action on one side and the Hecke action on the other side (since Hecke operators act as adjacency operators on the Bruhat--Tits building). 

\paragraph{Hecke polynomials.}
In Section~\ref{sec:heckepol}, we compute the Hecke polynomial appearing in the conjecture of Blasius--Rogawski (a polynomial that annihilates the Frobenius element acting on the cohomology of the Shimura variety). Here, we use a method due to Cornut and Koskivirta \cite{koskivirta:congruence} reducing the computation to local combinatorics on the Bruhat--Tits buildings via the so-called \emph{canonical retraction map} on buildings. 

\paragraph{Blasius--Rogawski and Euler system congruence relations.}
In Section~\ref{sec:cong-blas-rog}, we discuss a recent proof of the Blasius--Rogawski congruence relation due to J.-S. Koskivirta and adapt it to our case. The Euler system congruence relation is a slightly stronger statement than the 
Blasius--Rogawski congruence relation since it requires one to know the relation on the level of cycles and not on a cohomological level. Yet, there is a way of obtaining the stronger statement from the weaker statement that is originally due to Rubin. In  our particular case, we achieve this by using Rubin's method.

\paragraph{Distribution relations for special cycles.} 
Section~\ref{sec:distrel} contains one of the major contributions of this paper - we establish the distribution relations. There are two major relations that one can hope for - horizontal relations (used to develop Kolyvagin's theory over the base reflex field) and vertical relations (needed for developing the Iwasawa theory as well as for the congruence relation for Euler systems). In this paper, we discuss only the horizontal distribution relations and leave the vertical distribution relation for a separate future project (joint with 
Reda Boumasmoud).  
}
%
% Shimura Varieties and Cycles 
%
\section{Shimura Varieties and Special Cycles}\label{sec:shimura-cycles}
\setcounter{paragraph}{0}

\subsection{Unitary Groups}\label{subsec:unitary}

%\paragraph{Hermitian spaces.} Let $n \geq 3$ be an integer and 
%let $(V, \langle\,,\rangle)$ be a non-degenerate Hermitian space of dimension $n$ over $E$. 
%Suppose that $V$ has signature $(n-1, 1)$ at $\rho_1$ and signatures $(n, 0)$ at each of the places 
%$\rho_2, \dots, \rho_d$.  Suppose that $W \subset V$ is a Hermitian subspace of dimension 
%$n-1$ that has signature $(n-2, 1)$ at $\rho_1$ and signatures $(n-1, 0)$ at 
%$\rho_2, \dots, \rho_d$. We also denote by $D \subset V$ the $E$-line that is the orthogonal complement of $W$ with %respect to the Hermitian form, i.e., for which 
%\begin{equation}\label{eq:basicdecomp}
%V = W \perp D. 
%\end{equation}

\paragraph{Unitary groups of isometries and similitudes.}
Let $\star \in \{V, W\}$ and define the algebraic group of unitary isometries $\U(\star)$ over $F$ via
$$
\U(\star)(R) = \{g \in \GL(\star)(R \otimes_F E) \colon \forall x, y \in \star \otimes R, \langle gx, gy\rangle = \langle x, y\rangle \},  
$$
where $R$ is any $F$-algebra.  
Let $\G_{\star} = \Res_{F/\Q} \U(\star)$, let $\G = \G_V \times \G_W$ and let $\Hbf = \G_W$, the latter viewed as an algebraic subgroup of $\G$ via the diagonal embedding. We also consider the groups of unitary similitudes $\GU(\star)$ defined over $F$ defined by  
$$
\GU(\star )(R) = \{g \in \GL(\star)(R \otimes_F E) \colon \exists \nu(g) \in R^\times,\ \forall x, y \in \star \otimes R, \langle gx, gy\rangle = \nu(g) \langle x, y\rangle \},    
$$
for any $F$-algebra $R$. 
The similitude factor $\nu \colon \GU(\star)(R) \ra R^\times$ is a homomorphism and $\U(\star)(R) = \ker(\nu)$. Throughout, let $\Gtilde_{\star}$ be the $\Q$-reductive group defined by the fiber product 
\[
\xymatrix {
\Gtilde_{\star} \ar[r]^{\nu} \ar[d] & \mathbf{G}_{m, \Q} \ar[d] \\
\Res_{F/\Q} \GU(\star) \ar[r]^{\nu} & \Res_{F /\Q} \mathbf{G}_{m, F}. \\ 
}
\]
Let $\Gtilde =\Gtilde_V \times \Gtilde_W$.  

\paragraph{Lattices and self-dual lattices in hermitian spaces.} 
Let $\tau$ be an inert place of $F$. Let $\mathscr V$ be an hermitian space over $E_\tau$. An $\cO_{E_\tau}$-lattice in $\mathscr V$ will be an $\cO_{E_\tau}$-submodule of $\mathscr L \subset \mathscr V$ of full rank. Given an $\cO_{E_\tau}$-lattice 
$\mathscr L \subset \mathscr V$, define its \emph{dual lattice} $\mathscr L^\vee \subset \mathscr V$ by 
$$
\mathscr L^\vee = \{v \in \mathscr V \colon \langle v, \mathscr L \rangle \subseteq \cO_{E_\tau} \}. 
$$
A lattice $\mathscr L \subset \mathscr V$ is \emph{self-dual} if $\mathscr L^\vee = \mathscr L$. Not every local hermitian space contains self-dual lattices. Recall that a local hermitian space of even dimension $2m$ is called split if it is a sum of $m$ mutually orthogonal hyperbolic planes and that a local hermitian space of odd dimension $2m+1$ is called split if it is a sum of $m$ mutually orthogonal hyperbolic planes and an anisotropic line. 

For $\star \in \{V, W\}$, if $L_{\star} \subset \star$ is the global lattice (integral structure) introduced in 
Section~\ref{subsubsec:introgalprop}, we obtain a local lattice 
$L_{\star, \tau}  = L_\star \otimes_{\cO_E} \cO_{E_\tau} \subset \star_\tau$ for any finite place $\tau$ of $E$ and an adelic lattice $\widehat{L}_{\star} = L \otimes_{\cO_E} \widehat{\cO_E} \subset \widehat{\star}$ where $\widehat{\cO_E} = \cO_E \otimes \Zhat$ and $\widehat{\star} = \star \otimes \widehat{\Q}$. 

%Given an $\cO_{E_\tau}$-lattice 
%$L_\tau \subset V_\tau$, define its \emph{dual lattice} $L_\tau^\vee \subset V_\tau$ by 
%$$
%L_\tau^\vee = \{v \in V_\tau \colon \langle v, L_\tau \rangle \subseteq \cO_{E_\tau} \}. 
%$$
%A lattice $L_\tau \subset V_\tau$ is \emph{self-dual} if $L_\tau^\vee = L_\tau$. Not every local hermitian space contains self-dual lattices. Recall that a local hermitian space of even dimension $2m$ is called split if it is a sum of $m$ mutually orthogonal hyperbolic planes and that a local hermitian space of odd dimension $2m+1$ is called split if it is a sum of $m$ mutually orthogonal hyperbolic planes and an anisotropic line. 
%We prove the following lemma: 
\begin{lem}\label{lem:locherm}
For all but finitely many places $\tau$ of $F$ that are inert in $E$ the spaces $V_\tau$ and $W_\tau$ are both split and the lattices $L_{V,\tau}$ and $L_{W, \tau}$ are both self-dual in these spaces. 
\end{lem}

\begin{proof}
Fix an $E$-basis $\{v_i\}$ for $V$.  
Since local hermitian $E_\tau$-spaces are classified by their dimension and discriminant (an element of $F_\tau^\times / N E_\tau^\times$), it follows that for all but finitely many inert places $\tau$, the local determinant $\ds \det \left ( \langle v_i, v_j \rangle_\tau \right )_{i, j  = 1}^n \in \cO_{F_\tau}^\times$. As long as $\tau$ is unramified in $E$, the local norm map $N \colon \cO_{E_\tau}^\times \ra \cO_{F_\tau}^\times$ will be surjective (see, e.g., \cite[II \S 4]{lang:ant}) and hence, local discriminant will be the trivial element of $F_\tau^\times / N E_\tau^\times$, i.e., $V_\tau$ will be isomorphic to the split space. The argument for $W_\tau$ is similar. Now, it is not hard to check (using the hypothesis that $\langle e_D, e_D \rangle = 1$), that for all, but finitely many of these places, both $L_{V, \tau}$ and $L_{W, \tau}$ will be self-dual (it is sufficient to avoid places dividing $[L_V^\vee : L_V]$ and $[L_W^\vee : L_W]$). 
\end{proof}

For $\star \in \{V, W\}$ and for each finite place $\tau$ of $F$, the group 
$G_{\star, \tau}$ acts transitively on the set $\cL(\star_\tau)$ of self-dual local $\cO_{E_\tau}$-lattices in $\star_\tau$. As we explain later, $\cL(\star_\tau)$ corresponds to the set of hyperspecial vertices for the Bruhat--Tits buildings for the local unitary group $G_{\star, \tau}$. 

\paragraph{Self-dual lattices adapted to a decomposition.} For an inert place $\tau$ of $F$, consider an orthogonal decomposition 
\begin{equation}\label{eq:dec}
\mathscr V = \mathscr V' \perp \mathscr V'', 
\end{equation}
of a non-degenerate split local Hermitian $E_\tau$-space $\mathscr V$, where $\mathscr V'$ and 
$\mathscr V''$ are non-degenerate split local Hermitian $E_\tau$-vector 
subspaces (for the restriction of the Hermitian form). If $\mathscr L \subset \mathscr V$ is a self-dual local Hermitian 
$\cO_{E_\tau}$-lattice then we say that the decomposition 
\eqref{eq:dec} is \emph{adapted} to $\mathscr L$ (or that $\mathscr L$ is \emph{adapted} to \eqref{eq:dec}) if 
$\mathscr L \cap \mathscr V'$ is a self-dual $\cO_{E_\tau}$-lattice of $\mathscr V'$. Note that 
$$
\mathscr L \cap \mathscr V' \text{ is a self-dual }\cO_{E_\tau}\text{-lattice of }\mathscr V' \Longleftrightarrow \mathscr L \cap \mathscr V'' \text{ is a self-dual } \cO_{E_\tau}\text{-lattice of }\mathscr V''. 
$$

\paragraph{Integral structures.}\label{par:intstruct}
Let $\tau$ be an inert place of $E$ that satisfies the condition of Lemma~\ref{lem:locherm} and let 
$\star \in \{V, W\}$. The global lattices $L_{\star} \subset \star$ gives rise to a compact open subgroup 
$\Stab_{\G_{\star}(\Af)} (\widehat{L}_{\star} ) \subset \G_{\star}(\Af)$. 
The compact open subgroup $K_{\star, \tau} = \Stab_{G_{\star, \tau}}(L_{\star, \tau})$ is a 
hyperspecial maximal compact subgroup of $G_{\star, \tau}$. 
% and $K_{W, \tau} = \Stab_{G_{W, \tau}} (L_{W, \tau})$ are hyperspecial maximal compact subgroups of $G_{V, \tau}$ and $G_{W, \tau}$, respectively. Depending on the particular application, we will be considering compact open subgroups of $\G(\Af)$ of the form $K = K_{\tau} K^{(\tau)}$ where $K_{\tau} = K_{V_\tau} \times K_{W_\tau} \subset \U(V)(F_\tau) \times \U(W)(F_\tau)$ and $K^{(\tau)} = K_{V}^{(\tau)} \times K_{W}^{(\tau)} \subset \U(V)(\mathbf{A}_{F, f}^{(\tau)}) \times \U(W)(\mathbf{A}_{F, f}^{(\tau)})$ being a product of open compact subgroups (note that $K$ can be viewed as a subgroup of $\G(\Af)$). 
Depending on the particular application, we will be considering compact open subgroups of $\G(\Af)$ of the form $K = K_{\tau} K^{(\tau)}$ where $K_{\tau} = K_{V_\tau} \times K_{W_\tau} \subset \U(V)(F_\tau) \times \U(W)(F_\tau)$ and $K^{(\tau)} = K_{V}^{(\tau)} \times K_{W}^{(\tau)} \subset \U(V)(\mathbf{A}_{F, f}^{(\tau)}) \times \U(W)(\mathbf{A}_{F, f}^{(\tau)})$ being a product of open compact subgroups (we view $K$ as a subgroup of $\G(\Af)$). For such a $K \subset \G(\Af)$ and $\star \in \{V, W\}$, we will be considering compact open subgroups $\Ktilde_{\star} \subset \Gtilde_{\star}(\Af)$ of the form $\Ktilde_{\star} = \Ktilde_{\star, \tau} \times \Ktilde_{\star}^{(\tau)}$ where $\Ktilde_{\star, \tau}$ is the hyperspecial maximal subgroups of $\widetilde{G}_{\star, \tau}$ that is the stabilizer of the self-dual lattices $L_{\star, \tau}$ and $\Ktilde_{\star}^{(\tau)}$ is such that $K_{\star}^{(\tau)} = \Ktilde_{\star}^{(\tau)} \cap \U(\star)(\mathbf{A}_{F, f}^{(\tau)})$.

%For such a $K \subset \G(\Af)$, we will be considering compact open subgroups $\Ktilde_{V} \subset \Gtilde_V(\Af)$ and 
%$\Ktilde_{W} \subset \Gtilde_W(\Af)$ for $\Ktilde_{V} = \Ktilde_{V, \tau} \times \Ktilde_{V}^{(\tau)}$ and 
%$\Ktilde_{W} = \Ktilde_{W, \tau} \times \Ktilde_{W}^{(\tau)}$ where $\Ktilde_{V, \tau}$ and $\Ktilde_{W, \tau}$ are the hyperspecial maximal subgroups of $\Gtilde_V(F_\tau)$ and $\Gtilde_W(F_\tau)$, respectively that are equal to the stabilizers (in $\Gtilde_V(F_\tau)$ and $\Gtilde_W(F_\tau)$, respectively) of the self-dual lattices $L_{V, \tau}$ and $L_{W, \tau} = L_{V, \tau} \cap W$ and $\Ktilde_{V}^{(\tau)}$ (resp., $\Ktilde_{W}^{(\tau)}$) is such that $K_{V}^{(\tau)} = \Ktilde_{V}^{(\tau)} \cap \U(V)(\mathbf{A}_{F, f}^{(\tau)})$ (resp., $K_{W}^{(\tau)} = \Ktilde_{W}^{(\tau)} \cap \U(W)(\mathbf{A}_{F, f}^{(\tau)})$). 

Although it is not strictly necessary, for such a $\tau$, we fix a Witt basis $\{e_{1}, e_{-1}, \dots, e_{m}, e_{-m}\}$ for $W_\tau$ for the lattice 
$L_{W, \tau}$, that is, a basis which satisfies 
$$
L_{W, \tau} = \cO_{E_\tau}e_{1} \oplus \cO_{E_\tau}e_{-1} \oplus \dots \oplus \cO_{E_\tau}e_{m} \oplus 
\cO_{E_\tau}e_{-m}, 
$$
and  
$\langle e_{i}, e_{-i}\rangle = 1$ for all $i$, and $\langle e_{i}, e_{j} \rangle = 0$ for all $i \ne -j$. This basis yields a Witt decomposition 
\begin{equation}
W_\tau = (\underbrace{E_\tau e_{1} \oplus E_{\tau} e_{-1}}_{H_1}) \perp \dots \perp (\underbrace{E_\tau e_{m} \oplus E_\tau e_{-m}}_{H_m}).  
\end{equation}
Here, $H_i = E_\tau e_{i} \oplus E_\tau e_{-i}$ is a hyperbolic plane with $E_\tau e_{i}$ and $E_\tau e_{-i}$ being isotropic lines. Using the vector $e_{D}$ (that satisfies $\langle e_D, e_D \rangle = 1$), we get a Witt basis $\{e_1, e_{-1}, \dots, e_m, e_{-m}, e_D\}$ for $V_\tau$ that is an $\cO_{E_\tau}$-basis for $L_{V, \tau}$, i.e.,  
$$
L_{V, \tau} = \cO_{E_\tau}e_{1} \oplus \cO_{E_\tau}e_{-1} \oplus \dots \oplus \cO_{E_\tau}e_{m} \oplus 
\cO_{E_\tau}e_{-m} \oplus \cO_{E_\tau} e_D,
$$
This yields a Witt decomposition for $V_\tau$: 
\begin{equation}
V_\tau = H_1 \perp H_2 \perp \dots \perp H_m \perp \cO_{E_\tau} e_D. 
\end{equation}

\subsection{Shimura Varieties}\label{subsec:shimvar}
Here, we describe the Shimura varieties associated to the unitary groups $\Hbf$ and $\G$ described in the introduction.  

\paragraph{The groups $\U(V)_{E}$ and $\GU(V)_E$.}\label{par:ungps} 
For any $E$-algebra $S$ there is an $E$-algebra isomorphism 
\begin{equation}\label{eq:complexconj}
S \otimes_F E \isom S \times S, \qquad x \otimes \alpha \mapsto ( \overline{\alpha} x,   \alpha x),  
\end{equation}
where the Galois group $\Gal(E/F)$ acts on $S \otimes_F E$ via  
$x \otimes \alpha \mapsto x \otimes \overline{\alpha}$ for $\alpha \in E$ and $x \in S$ and it acts via  
$(x, y) \mapsto (y, x)$ on $S \times S$ for any $x, y \in S$. 
Similarly, there is an isomorphism of $E$-vector spaces 
\begin{equation}\label{eq:vectE}
V \otimes_F  E \isom V \oplus V, \qquad v \otimes \alpha \mapsto (\overline{\alpha} v, \alpha v), \ v \in V, \ \alpha \in E. 
\end{equation}
The latter induces an isomorphism 
\begin{equation}\label{eq:vectS}
V \otimes_F S = (V \otimes_F E) \otimes_E S \isom V_S \oplus V_S, 
\end{equation}
where $V_S = V \otimes_E S$. 
%where $\overline{V}$ denotes the $E$-vector space whose underlying set is the same as the one for $V$, but whose $E$-action is given by $e \cdot v = \overline{e} v$ for all $e \in E$ and $v \in V$. 
This yields a group isomorphism 
\begin{equation}\label{eq:GLV}
\GL(V \otimes_F S) \isom \GL(V_S \oplus V_S) = \GL(V_S) \times \GL(V_S).  
\end{equation}
Under this identification, $\U(V)(S)$ can be described as 
$$
\U(V)(S) = \{g \in \GL(V \otimes_F S) \colon \langle g v, g w\rangle = \langle v, w\rangle, \ \forall v, w\in V\otimes_F S\}. 
$$ 
Using \eqref{eq:GLV}, the elements of $\U(V)(S)$ are the pairs $(g_1, g_2)$ where 
$g_1, g_2 \in \GL(V_S)$ such that for all $(v_1, v_2) \in V_S \oplus V_S$ and 
$(w_1, w_2) \in V_S \oplus V_S$, we have 
$$
\langle (g_1, g_2)(v_1, v_2), (g_1, g_2) (w_1, w_2) \rangle_{V \otimes_F S}  = \langle (v_1, v_2), (w_1, w_2)\rangle_{V \otimes_F S} \in E \otimes_F S,   
$$
where $\langle \,,\rangle_{V \otimes_F S}$ is the Hermitian pairing on $V_S \oplus V_S$ induced from the natural one on $V \otimes_F S$ via \eqref{eq:vectS}. If we use the identification \eqref{eq:complexconj}, we see that  
$$
\langle (v_1, v_2), (w_1, w_2) \rangle_{V \otimes_F S} = (\langle v_2, w_1 \rangle_S, \langle v_1, w_2 \rangle_S) \in S \times S,  
$$
and for any $(g_1, g_2) \in \U(V)(S)$, 
$$
\langle (g_1, g_2)(v_1, v_2), (g_1, g_2) (w_1, w_2) \rangle_{V \otimes_F S}  = 
(\langle g_2 v_2, g_1 w_1 \rangle_S, \langle g_1 v_1, g_2 w_2 \rangle_S) \in S \times S.
$$
This means that $\langle g_2 v_2, g_1 w_1 \rangle_S = \langle v_2, w_1 \rangle_S$ for all $v_2, w_1 \in V_S$, i.e., $g_2$ is completely determined from $g_1$ and hence, the map $(g_1, g_2) \mapsto g_1$ identifies $\U(V)(S) \isom \GL(V_S) = \GL(V)(S)$ for any $E$-algebra $S$, i.e., 
gives us an isomorphism of algebraic groups $\U(V)_E \isom \GL(V)_E$. Similarly, we get an isomorphism $\GU(V)_E \isom \GL(V)_E \times \mathbf{G}_{m, E}$ (the determinant map accounting for the second factor).

\paragraph{Shimura datum $(\G, X)$.}\label{par:shimdatum}
Let $\Sbf = \Res_{\C/\R} \G_{m, \C}$ be the Deligne circle group. Choose a basis for the $\Qbar$-hermitian space $(\star \otimes_E \Qbar, \langle\,, \rangle_{\Qbar})$ such that the Hermitian form with respect to that basis is $J = \diag(1, \dots, 1, -1)$. Let $X_{\star}$ be the $\G_{\star}(\R)$-conjugacy class of homomorphisms of $\R$-algebraic groups 
\begin{equation}\label{eq:datum}
h_{\star} \colon \Sbf \ra \G_{{\star}, \R}, \qquad z \mapsto \left ( \diag(1, \dots, 1, \overline{z} / z), \mathbf{1}_2, \dots \mathbf{1}_d \right), 
\end{equation}
where we use the identification $\G_{\star, \R} \isom \U(\star)_{F_{\rho_1}} \times \U(\star)_{F_{\rho_2}} \times \dots \times \U(\star)_{F_{\rho_d}}$. 
%in addition, we view $\U(\star)(F_{\rho_1})$ as a subgroup of $\GL_n(\C)$ via the choice of the basis. 
%Here, $\mathbf{1}_i \in \U(V)(F_{\rho_i})$ denotes the identity matrix.  
%Similarly, one defines $X_W$ for the subspace $W \subset V$ whose signature at the place $\tau_1$ of $F$ is $(n-2, 1)$ as well as a homomorphism $h_W$. 
Alternatively \cite{gross:letter}, Witt's theorem implies that $X_{\star}$ is the space of negative lines in ${\star} \otimes_{\rho_1} \R$ (i.e., $X_\star$ is a complex ball of dimension $\dim \star -1$) 
%and $X_W$ is the space of negative lines in $W \otimes_{\rho_1} \R$ (a complex ball of dimension $n-2$).
Consider the identification $\mathbf{S}_{\C} \isom \mathbf{G}_{m, \C} \times \mathbf{G}_{m, \C}$ and the corresponding embedding of $\mathbf{S}(\R)$ into $\mathbf{S}(\C)$ given by $z \mapsto (z, \overline{z})$ where $\Sbf(\R) \isom \C^\times \hra \C^\times \times \C^\times \isom \Sbf(\C)$. 
The homomorphism $h_{\C}$ is then given by 
$$
h_{\C} \colon \Sbf_{\C} \ra \G_{V, \C}, \qquad (z_1, z_2) \mapsto \diag(1, \dots, 1, z_2 / z_1) \times \mathbf{1}_2 \times \dots \times \mathbf{1}_d. 
$$ 

\paragraph{Reflex fields.} 
Following \cite[p.101]{milne:shimura}, for a subfield $k \subset \C$, let $\cC_{\star}(k)$ 
be the $\G_{\star}(k)$-conjugacy class of co-characters of $\G_{{\star}, k}$ defined over $k$. 
Consider the homomorphism $\mathbf{G}_{m, \C} \hra \mathbf{S}_{\C} \isom \mathbf{G}_{m, \C} \times \mathbf{G}_{m,\C}$ given by $z \mapsto (z, 1)$. Given $x \in X_{\star}$, let $\mu_{\star, x}$ be the co-character 
of $\G_{\star}$ obtained by precomposing $h_{x, \C} \colon \mathbf{S}_{\C} \ra \G_{{\star}, \C}$ with that homomorphism. As 
$x$ varies over $X_{\star}$, we obtain a $\G_{\star}(\R)$-conjugacy class of co-characters and hence, an element $\mu_{X_{\star}} \in \cC_{\star}(\R)$. 
%Similarly for $(\G_W, X_W)$, we obtain an associated $\G_W(\R)$-conjugacy class $\mu_{X_W} \in \cC_W(\R)$ of co-characters. 

One can view $\mu_{X_{\star}}$ as an element of $\cC_{\star}(\Qbar)$. Indeed, if $\Tbf_{\star, \Qbar} \subset \G_{\star, \Qbar}$ is a maximal split torus then by \cite[Lem.12.1]{milne:shimura}, 
$\G(\C) \backslash \Hom(\G_{m, \C}, \G_{\C})$ is in bijection with 
$\mathscr W_{\star} \backslash \Hom(\G_{m, \C}, \Tbf_{\star, \C})$ where $\mathscr W_{\star} = N_{\G_{\star}(\C)}(\Tbf_{\star, \C}) / C_{\G_{\star}(\C)}(\Tbf_{{\star}, \C})$. Since neither $\mathscr W_{\star}$, nor $\Hom(\G_{m, \C}, \Tbf_{\star, \C})$ changes when we replace $\C$ by $\Qbar$, one view $\mu_{X_{\star}}$ as an element of $\cC_{\star}(\Qbar)$. 
%To calculate the associated co-character $\mu_{h, V}$, consider the homomorphism $\mathbf{G}_{m, \C} \hra \mathbf{S}_{\C} \isom \mathbf{G}_{m, \C} \times \mathbf{G}_{m,\C}$ given by $z \mapsto (z, 1)$. Then 
The reflex field $E(\G_{\star}, X_{\star})$ is then the fixed field of the subgroup of $\Gal(\Qbar / \Q)$ fixing 
$\mu_{X_{\star}}$ as an element of $\cC_{\star}(\Qbar)$ (i.e., stabilizing $\mu_{X_{\star}}$ as a subset of 
$\Hom(\G_{m, \Qbar}, \G_{\star, \Qbar})$). We can now take $\Tbf_{{\star}, \Qbar}$ to be a diagonal torus with respect to the basis chosen in 
Section~\ref{par:shimdatum}. Then $\mu_{X, {\star}}$ is the $\G_{\star}(\Qbar)$-conjugacy class of the co-character  
$$
\mu_{\star} \colon \G_{m, \Qbar} \ra \G_{\star, \Qbar}, \qquad \lambda \mapsto \left ( \diag(1, \dots, 1, \lambda^{-1}), \mathbf{1}_2,  \dots,  \mathbf{1}_d \right ). 
$$
The action of $\sigma \in \Gal(\Qbar / \Q)$ on $\mu_{\star}$ is then given by 
\begin{equation}\label{eq:musigma}
\leftexp{\sigma}\mu_{\star}(\lambda) := \sigma \left ( \mu_{\star}(\sigma^{-1}(\lambda))\right ) = 
\left ( \sigma (\diag (1, \dots, 1, \sigma^{-1}(\lambda^{-1}))), \mathbf{1}_2, \dots, \mathbf{1}_d\right ). 
\end{equation}
Recall that under the isomorphism $\GL(\star)(\Qbar \otimes_F E) \isom \GL(\star )(\Qbar) \times \GL(\star)(\Qbar)$ the natural action of $\sigma \in \Gal(\Qbar / \Q)$ on $\GL(\star)(\Qbar \otimes_F E)$ corresponds to the action  
%$$
%\U(V)(\Qbar) = \{(g_1, g_2) \in \GL(V)(\Qbar) \times \GL(V)(\Qbar)\},
%$$ 
%is given by 
$$
\leftexp{\sigma}( g_1, g_2) = 
\begin{cases}
(\leftexp{\sigma}g_1, \leftexp{\sigma}g_2) & \text{if } \sigma \in \Gal(\Qbar / E), \\
(\leftexp{\sigma}g_2, \leftexp{\sigma}g_1) & \text{otherwise,}
\end{cases}
$$
where $\leftexp{\sigma}{g}$ denotes the usual action of $\sigma$ on $g \in \GL(\star)(\Qbar)$. As $\U(\star)(\Qbar)$ is a subgroup of $\GL(\star)(\Qbar \otimes_F E)$, this also gives us the action on $\U(\star)(\Qbar)$. 
Using that and \eqref{eq:musigma}, we obtain  
$$
\leftexp{\sigma}{\mu_{\star}} = 
\begin{cases}
\mu_{\star} & \text{if } \sigma \in \Gal(\Qbar / E), \\
\mu_{\star}^{-1} & \text{otherwise}. 
\end{cases}
$$
Since $\mu_{\star}$ and $\mu_{\star}^{-1}$ are not in the same $\G_{\star}(\Qbar)$-conjugacy class, the subgroup of $\Gal(\Qbar / \Q)$ stabilizing the $\G_{\star}(\Qbar)$-conjugacy class of $\mu_{\star}$ is exactly $\Gal(\Qbar/E)$, i.e., $E(\G, X) = E$. 

%i.e., $\mu_{h, V} = -\mu_n$ which, according to \ref{par:galact} is defined over $E$ with respect to the embedding $\iota \colon E \hra \C$, i.e., the reflex field of the Shimura datum 
%$(\G_V, X_V)$ is $E(\G_V, X_V) = E$. 
%It is now clear that the unique irreducible representation of $\widehat{\G_V} = \GL_n(\C)$ with highest weight $\widehat{\mu_{h, V}}$ (the corresponding character to $\mu_{h, V}$) is precisely the representation $r_n \colon \widehat{\G_V} \ra \GL_n(\C)$ of $\GL_n(\C)$ given by $A \mapsto \leftexp{t}{A}^{-1}$. We define $\mu_{h, W}, \widehat{\mu_{h, W}}$ and $r_{n-1}$ in a similar way. Alternatively \cite{gross:letter}, it follows from Witt's theorem that $X_V$ is the space of negative lines in $V \otimes_{\rho_1} \R$ (i.e., $X_V$ is a complex ball of dimension $n-1$) and $X_W$ is the space of negative lines in $W \otimes_{\rho_1} \R$ (a complex ball of dimension $n-2$). 

Finally, let $\Delta \colon W \hra V \times W$ be the diagonal embedding (that is, the natural inclusion on the first factor and the identity on the second factor). There is an induced diagonal embedding $\Delta \colon X_W \hra X_V \times X_W = X$. 
%Let $\Hbf = \Delta(\G_W)$. 
Consider the symmetric space $X = X_V \times X_W$ for the product group $\G = \G_V \times \G_W$ and let $Y = \Delta(X_W) \subset X$. Finally, let $K_{\Hbf} = K \cap \Hbf(\Af)$. 
%Note that the representation of $\widehat{\G} = \widehat{\G_V} \times \widehat{\G_W}$ corresponding to $X$ is the 
%product $r_n \boxtimes r_{n-1}$ of the above representations for $\GL_n(\C)$ and $\GL_{n-1}(\C)$, respectively (the dimension of 
%$r_n \boxtimes r_{n-1}$ is $n(n-1)$).

\paragraph{Shimura data $(\Gtilde, X')$ and $(\Gtilde, \Xtilde)$.}
Besides the Shimura datum $(\G, X)$, we consider two other data for the group of unitary similitudes $\Gtilde_{\star}$. Let $X'_{\star}$ be the $\Gtilde_{\star}(\R)$-conjugacy class of 
$$
h \colon \mathbf{S}_{\R} \ra \Gtilde_{\star, \R}, \qquad z \mapsto \diag(1, \dots, 1, \overline{z} / z)
\times \mathbf{1}_2 \times \dots \times \mathbf{1}_d, 
$$
where we have identified $\Gtilde_{\star}(\R)$ with a subgroup of $\GU(\star)(F_{\rho_1}) \times \dots \times \GU(\star)(F_{\rho_d})$. 

\begin{rem}
The domains $X'_{\star}$ and $X_{\star}$ for the groups $\Gtilde_{\star, \R}$ and $\G_{\star, \R}$ are closely related. Yet, as we will see below, $X'_{\star}$ might be a disjoint union of two conjugates of $X_{\star}$ (e.g., in the case when the dimension of the space ${\star}$ is even). 
\end{rem}

\noindent The other hermitian symmetric domain $\Xtilde_{\star}$ is defined as the $\Gtilde_{\star}(\R)$-conjugacy class of 
$$
\widetilde{h} \colon \Sbf_{\R} \ra \Gtilde_{\star, \R}, \qquad z \mapsto \diag(z, \dots, z, \overline{z})
\times z \cdot \mathbf{1}_2 \times \dots \times z \cdot \mathbf{1}_d. 
$$
%Similarly, we define Shimura data $(\Gtilde_W, X_W')$, $(\Gtilde_W, \Xtilde_W')$, $(\Gtilde, X')$ and $(\Gtilde, \Xtilde)$. 
%Unlike $(\G_V, X_V)$ and $(\Gtilde_V, X_V')$, the Shimura datum $(\Gtilde_V, \Xtilde)$ gives rise to a PEL-type Shimura variety (if $n = 3$ this is precisely the Picard modular surface). Yet, the Galois action on the connected components on that variety is more complicated.  

\paragraph{Shimura varieties.} 
Consider Shimura varieties $\Sh_{K_{\star}}(\G_{\star}, X_{\star})$  
whose complex points are given by  
$$
\Sh_{K_\star}(\G_{\star}, X_{\star})(\C) := \G_{\star}(\Q) \backslash (\G_{\star}(\Af) \times X_{\star}) / K_{\star}, 
$$
%$$
%\Sh_{K_W}(\G_W, X_W)(\C) = \G_W(\Q) \backslash (\G_W(\Af) \times X_W) / K_W. 
%$$
and let $\Sh_K(\G, X) = \Sh_{K_V}(\G_V, X_V) \times \Sh_{K_W}(\G_W, X_W)$. The complex points of $\Sh_K(\G, X)$ are given by 
$$
\Sh_K(\G, X)(\C) := \G(\Q) \backslash (\G(\Af) \times X) / K.  
$$
%Consider the $(n-2)$-cycle defined via the natural embedding 
%$\Sh_{K_W}(\G_W, X_W) \hra \Sh_{K_V}(\G_V, X_V)$
%as the image of $\Sh_{K_{\Hbf}}(\Hbf, Y)$ under the diagonal map
%$$
%\Sh_{K_{\Hbf}}(\Hbf, Y) = \Sh_{K_W}(\G_W, X_W) \hra \Sh_{K_V}(\G_V, X_V) \times \Sh_{K_W}(\G_W, X_W) = \Sh_{K}(\G, X). 
%$$
%This cycle already appeared in \cite[\S 27]{gan-gross-prasad} as a higher-dimensional analogy of Heegner points (see~\cite[Conj.~27.1]{gan-gross-prasad} for the analogue of the Gross--Zagier formula).  

%\begin{rem}
%In Section~\ref{subsec:speccyc}, we will define the higher-dimensional analogues of higher Heegner points (see \cite{gross:heegner_points} and \cite{gross:kolyvagin}) by considering cycles on the product Shimura variety 
%$\Sh_{K}(\G, X)$. Our ultimate goal will be to establish distribution and congruence relations for these cycles that will allow 
%for the construction of an Euler system. 
%\end{rem}
\noindent We also consider the Shimura varieties $\Sh_{\Ktilde_{\star}}(\Gtilde_{\star}, X_{\star}')$, 
%(resp., $\Sh_{\Ktilde_W}(\Gtilde_W, X_W')$), 
$\Sh_{\Ktilde_{\star}}(\Gtilde_{\star}, \Xtilde_{\star})$ 
%(resp., $\Sh_{\Ktilde_W}(\Gtilde_W, \Xtilde_W)$) 
as well as $\Sh_{\Ktilde}(\Gtilde, X')$ and $\Sh_{\Ktilde}(\Gtilde, \Xtilde)$. 
Here, $\Ktilde_{\star}$ is as in \ref{par:intstruct}. 
%the hyperspecial subgroups of $\Gtilde_V(\Af)$ (resp., $\Gtilde_W(\Af)$) corresponding to the fixed lattices $L_V$ (resp., $L_W$), and $\Ktilde = \Ktilde_V \times \Ktilde_W$. We have $K_V = \Ktilde_V \cap \G_V(\Af)$ and $K_W = \Ktilde_V \cap \G_W(\Af)$. 

\paragraph{Connected components.} By \cite[Lem.5.13]{milne:shimura}, the connected 
components of $\Sh_K(\G, X)$ are indexed by the double cosets $\G(\Q) \backslash \G(\Af) / K$. 
Similarly, the connected components of $\Sh_{\Ktilde}(\Gtilde, X')$ are indexed by $\Gtilde(\Q)^{\dagger} \backslash \Gtilde(\Af) / \Ktilde$, where 
$
\Gtilde(\Q)^{\dagger} = \Gtilde_V(\Q)^{\dagger} \times \Gtilde_W(\Q)^{\dagger}
$
for 
$$
 \Gtilde_{\star}(\Q)^{\dagger} = \{g_{\star} \in \Gtilde_{\star}(\Q) \colon \nu(g_{\star}) > 0\}, \qquad \star \in \{V, W\}. 
 %\text{and} \ \Gtilde_W(\Q)^{\dagger} = \{g_W \in \Gtilde_W(\Q) \colon \rho_1(\nu(g_W)) > 0\}. 
$$
The latter definition makes sense since we know that for $\widetilde{G}_{\star}$, the similitude factor $\nu$ takes values in $\Q^\times$. 
More precisely, if $g_1, \dots, g_r$ (resp., $\widetilde{g}_1, \dots, \widetilde{g}_s$) are double coset representatives for $$\G(\Q) \backslash \G(\Af) / K$$ (resp.,  $\Gtilde(\Q)^{\dagger} \backslash \Gtilde(\Af) / \Ktilde$) then 
$$
\Sh_K(\G, X) = \bigsqcup_{i = 1}^r \Gamma_{g_i} \backslash X, \qquad 
\Gamma_{g_i} = \G(\Q) \cap g_i K g_i^{-1}. 
$$
and 
$$
\Sh_{\Ktilde}(\Gtilde, X') = \bigsqcup_{i = 1}^s \widetilde{\Gamma}_{\widetilde{g}_i} \backslash X', \qquad 
\widetilde{\Gamma}_{\widetilde{g}_i} = \Gtilde(\Q)^{\dagger} \cap \widetilde{g}_i \Ktilde \widetilde{g}_i^{-1}. 
$$

\paragraph{Relation between $(\G, X)$ and $(\Gtilde, X')$.} Note that the inclusion $\G_V(\Af) \hra \Gtilde_V(\Af)$ induces a natural map 
$e_V \colon \Sh_{K_V}(\G_V, X_V) \ra \Sh_{\Ktilde_V}(\Gtilde_V, X_V')$. Similarly, 
we have a natural map $e_W \colon \Sh_{K_W}(\G_W, X_W) \ra \Sh_{\Ktilde_W}(\Gtilde_W, X_W')$. We now show that the maps $e_V$ and $e_W$ are closed embeddings. 

\begin{lem}\label{lem:injodd}
For $\star \in \{V, W\}$ the map $e_{\star}$ is injective and identifies $\Sh_{K_{\star}}(\G_{\star}, X_{\star})$ with an open and closed subset of $\Sh_{\Ktilde_{\star}}(\Gtilde_{\star}, X_{\star}')$. 
%that is the union of all the connected components of $\Sh_{\Ktilde_V}(\Gtilde_V, X_V')$ indexed by $\G_V(\Q) \backslash \G_V(\Af) / K_V$.
\end{lem}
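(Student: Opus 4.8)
The plan is to analyse $e_V$ through the similitude character $\nu\colon\Gtilde_V\to\Res_{F/\Q}\Gm$, whose kernel is exactly $\G_V$, together with the facts that $\Gtilde_V=\G_V\cdot Z$ with $Z:=Z(\Gtilde_V)$ the (Weil restriction of the) torus of $E$-scalars --- which is central, acts trivially on $X_V'$, and satisfies $\nu|_Z=\Norm_{E/F}$ --- and that $\Gtilde_V^{\der}=\G_V^{\der}=\Res_{F/\Q}\SU(V)$. Intuitively $\Sh_{\Ktilde_V}(\Gtilde_V,X_V')$ is, up to a $\mu_2$-worth of ambiguity, $\Sh_{K_V}(\G_V,X_V)$ times a finite set, and the Lemma makes this precise.

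\emph{Step 1: the archimedean picture.} I claim $X_V$ is a union of connected components of $X_V'$. Both are orbits of the morphism $h$ defining $X_V$ (under $\Gtilde_V(\R)$, resp.\ $\G_V(\R)$); since $\G_V(\R)$ is connected it lies in $\Gtilde_V(\R)^\circ$, so $X_V=\G_V(\R)\cdot h\subseteq\Gtilde_V(\R)^\circ\cdot h$. Conversely the multiplication map $\Gtilde_V^{\der}\times Z\to\Gtilde_V$ is surjective, hence open on real points, so $\Gtilde_V(\R)^\circ\subseteq\Gtilde_V^{\der}(\R)\cdot Z(\R)=\G_V^{\der}(\R)\cdot Z(\R)\subseteq\G_V(\R)\cdot Z(\R)$; as $Z(\R)$ fixes $h$, this gives $\Gtilde_V(\R)^\circ\cdot h\subseteq\G_V(\R)\cdot h=X_V$. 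Thus $X_V=\Gtilde_V(\R)^\circ\cdot h$; and since $\Stab_{\Gtilde_V(\R)}(h)$ is the group of unitary similitudes of the definite Levi attached to $h$ --- a connected group contained in $\Gtilde_V(\R)^\circ$ --- $X_V$ is exactly the connected component of $X_V'$ through $h$. Hence $X_V\hra X_V'$ is an open and closed embedding of complex manifolds, with $X_V'$ a disjoint union of one or two copies of $X_V$ according to the action of $\Gtilde_V(\R)/\Gtilde_V(\R)^\circ$ (cf.\ the Remark).

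\emph{Step 2: injectivity.} Suppose $(g_1,x_1),(g_2,x_2)\in\G_V(\Af)\times X_V$ map to the same point of $\Sh_{\Ktilde_V}(\Gtilde_V,X_V')(\C)$: there are $\widetilde{\gamma}\in\Gtilde_V(\Q)$, $\widetilde{k}\in\Ktilde_V$ with $\widetilde{\gamma}g_1\widetilde{k}=g_2$ in $\Gtilde_V(\Af)$ and $\widetilde{\gamma}x_1=x_2$. Since $\widetilde{\gamma}$ carries $x_1\in X_V$ to $x_2\in X_V$, by Step 1 it preserves the component $X_V$, so $\widetilde{\gamma}\in\Gtilde_V(\R)^\circ$; and applying $\nu$, with $\nu(g_i)=1$, gives $u:=\nu(\widetilde{\gamma})=\nu(\widetilde{k})^{-1}\in F^\times\cap\nu(\Ktilde_V)$, a \emph{totally positive unit} of $F$. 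Being a unit away from $\infty$ and totally positive at $\infty$, $u$ is a local norm from $E$ at every unramified finite place and at every archimedean place; and one may choose $\Ktilde_V$ (its prime-to-$\tau$ part is at our disposal, and these conditions hold automatically at $\tau$ and at almost all $v$) so that $\nu(\Ktilde_{V,v})\subseteq\Norm_{E_v/F_v}(E_v^\times)$ and $\Ktilde_{V,v}=\bigl(\Ktilde_{V,v}\cap Z(F_v)\bigr)\cdot K_{V,v}$ for all $v$, making $u$ a local norm everywhere. By the Hasse norm theorem $u=\Norm_{E/F}(z_0)$ for some $z_0\in E^\times=Z(\Q)$, which (adjusting within its norm fibre by Hilbert $90$) we may take in $\Ktilde_V$. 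Then $\gamma:=\widetilde{\gamma}z_0^{-1}$ has $\nu(\gamma)=1$, so $\gamma\in\G_V(\Q)$; it acts on $X_V$ as $\widetilde{\gamma}$ does ($z_0$ being central and fixing $X_V'$), so $\gamma x_1=x_2$; and $g_2=\gamma g_1(z_0\widetilde{k})$ with $z_0\widetilde{k}\in\Ktilde_V\cap\G_V(\Af)=K_V$. Hence $(g_1,x_1)$ and $(g_2,x_2)$ already agree in $\Sh_{K_V}(\G_V,X_V)(\C)$, proving $e_V$ injective.

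\emph{Step 3: open and closed image, and the obstacle.} Writing $\Sh_{K_V}(\G_V,X_V)=\bigsqcup_i\Gamma_i\backslash X_V$ and $\Sh_{\Ktilde_V}(\Gtilde_V,X_V')=\bigsqcup_j\widetilde{\Gamma}_j\backslash X_V'$ as in the paragraph on connected components, each $\Gamma_i\backslash X_V$ maps under $e_V$ into some $\widetilde{\Gamma}_j\backslash X_V'$; since $X_V$ is open and closed in $X_V'$ and $\widetilde{\Gamma}_j\subset\Gtilde_V(\Q)$ permutes the components of $X_V'$, the image is $\widetilde{\Gamma}_j\backslash(\widetilde{\Gamma}_j\cdot X_V)$, which is open and closed in $\widetilde{\Gamma}_j\backslash X_V'$. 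So $e_V$ is an injective local isomorphism onto an open and closed subset of $\Sh_{\Ktilde_V}(\Gtilde_V,X_V')$, hence an isomorphism onto it; since the comparison map is algebraic by functoriality of canonical models, this identifies $\Sh_{K_V}(\G_V,X_V)$ with an open and closed subscheme, as asserted. The only real difficulty is Step 2: the unit $u=\nu(\widetilde{\gamma})$ need not a priori be a norm from $E$, and turning it into a rational central element lying in the level structure requires both a permissible choice of $\Ktilde_V$ away from $\tau$ and the local--global Hasse norm theorem --- precisely where the $\mu_2$ in the central isogeny $\G_V\times\Res_{F/\Q}\Gm\to\Gtilde_V$, and hence the parity of $\dim V$, enters. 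Steps 1 and 3 are comparatively formal.
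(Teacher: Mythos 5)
Your strategy is genuinely different from the paper's. The paper forces the rational element $\widetilde{\gamma}$ itself into $\G_V(\Q)$: for $n$ odd it uses the determinant trick to get positivity of $\nu$ and then argues that $\nu(\widetilde{\gamma})\in F^\times\cap\nu(\Ktilde_V)$ must be trivial (written there as $\Q^\times\cap\Zhat^\times=\{\pm1\}$), and for $n$ even it uses, as your Step 1 does, that elements with negative similitude at $\rho_1$ swap the two components of $X_V'$. You instead correct $\widetilde{\gamma}$ by a central scalar $z_0\in E^\times$ with $\Norm_{E/F}(z_0)=\nu(\widetilde{\gamma})$, produced via the Hasse norm theorem; this treats both parities at once and correctly puts the finger on the point where the paper's argument is thinnest when $F\neq\Q$ (a totally positive unit of $\cO_F$ need not be $1$). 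Steps 1 and 3 are fine (the inference ``preserves $X_V$ hence lies in $\Gtilde_V(\R)^\circ$'' is stronger than needed, but you only use positivity of $\nu$ at $\rho_1$, which does follow).

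The gap is in the last move of Step 2: ``adjusting within its norm fibre by Hilbert 90, we may take $z_0\in\Ktilde_V$.'' Hilbert 90 only says that the fibre of $u$ is $z_0\cdot\{w/\overline{w}\}$; it gives no control on integrality, and what you need is a solution of $\Norm_{E/F}(z_0)=u$ lying in $Z(\Af)\cap\Ktilde_V$, in particular in $\widehat{\cO}_E^\times$. Any $z_0$ with unit norm has divisor supported on split primes, of the form $\mathfrak b\,\overline{\mathfrak b}^{-1}$, and multiplying by $w/\overline{w}$ can remove it only when $\mathfrak b$ differs from a principal ideal by a conjugation-invariant ideal; this is a genuine genus-theory/class-group condition, and ``$u$ is a global norm and an everywhere-local norm of units'' does not imply ``$u$ is the norm of a global unit lying in the level.'' So this step can fail as stated and needs an actual argument (e.g.\ shrinking the level so that, by Chevalley's theorem on congruence subgroups of unit groups, every totally positive unit in $\nu(\Ktilde_V)$ is a square $\epsilon^2=\Norm_{E/F}(\epsilon)$ with $\epsilon$ central in the level, or a direct computation of $F^\times\cap\nu(\Ktilde_V)$ for the $\Ktilde_V$ of \ref{par:intstruct}). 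Two further points: your proof quietly strengthens the hypotheses by choosing $\Ktilde_V$ with special properties away from $\tau$, so even once repaired it proves a variant of the lemma for a restricted class of levels rather than for the $\Ktilde_V$ already fixed in the paper; and for $n$ odd the Hasse theorem is not needed at all, since taking determinants in $\overline{g}^{t}J'g=\nu(g)J'$ shows directly that $\nu(\widetilde{\gamma})\in\Norm_{E/F}(E^\times)$ — the integrality of the correcting scalar is the only real issue, and it is exactly the one left open.
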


\begin{proof}
%\dimnote{Prove injectivity for connected components. Treat the general case when $n$ is even.}
%Since $n = \dim V$ is odd, this condition implies that the similitude $\nu \colon \Gtilde_V(\R) \ra \R^\times$ takes values in $\R^\times_{> 0}$. Indeed, if $g_{\R} \in \Gtilde_V(\R)$ then taking determinants yields $\nu(g_{\R})^n > 0$ and hence, $\nu(g_{\R}) > 0$. 
The inclusion $\G_V(\Af) \hra \Gtilde_V(\Af)$ induces a map 
$$
\G_V(\Q) \backslash \G_V(\Af) / K_V \hra \Gtilde_V(\Q)^{\dagger} \backslash \Gtilde_V(\Af) / \Ktilde_V. 
$$
To check that this map is injective, it suffices to prove that for any 
$g', g'' \in \G_V(\Af)$ for which $\Gtilde_V(\Q)^{\dagger} g' \Ktilde_V = \Gtilde_V(\Q)^{\dagger} g'' \Ktilde_V$, we have $\G_V(\Q) g' K_V = \G_V(\Q) g'' K_V$. Suppose that $g'' = g_{\Q} g' k$ where $g_{\Q} \in \Gtilde_V(\Q)^{\dagger}$ and $k \in \Ktilde_V$. It follows that $\nu(g_{\Q}) \nu(k) = 1$. Since $\nu(g_{\Q}) \in \Q^\times$  and $\nu(k) \in \Zhat^\times$ and since $\Q^\times \cap \Zhat^\times = \{\pm 1\}$, we get that $\nu(g_{\Q}) = 1$ (this uses that $\nu(g_{\Q}) \in \Q_{> 0}$). 

We have thus checked injectivity on the set of connected components. We still need to check injectivity on each connected component. For $\star \in \{V, W\}$ take any connected component $\Gamma_g \backslash X_{\star}$ for $g \in \G_{\star}(\Q) \backslash \G_{\star}(\Af) / K_{\star}$. The corresponding component of $\Sh_{\Ktilde_\star}(\G_\star, X_\star)$ is 
$\widetilde{\Gamma}_g \backslash X'_{\star}$ for 
$\widetilde{\Gamma}_g = \Gtilde^{\dagger}_{\star}(\Q) \cap g \Ktilde_{\star} g^{-1}$ 
and we have to show that the map 
$\Gamma_g \backslash X_{\star} \ra \widetilde{\Gamma}_g \backslash X'_{\star}$
is injective. In the case when $\dim \star$ is odd, it suffices to check that $\Gamma_g = \widetilde{\Gamma}_g$. Clearly, $\Gamma_g \subseteq \widetilde{\Gamma}_g$. To show that $\Gamma_g = \widetilde{\Gamma}_g$, take any element 
$g_{\Q} \in \widetilde{\G}(\Q)^{\dagger} \cap g \widetilde{K} g^{-1}$ and note that $\nu(g_{\Q}) = \nu(k)$ for $k \in \widetilde{K}$. The same argument as above shows that $\nu(g_{\Q}) = 1$, i.e., $g_{\Q} \in \G(\Q) \cap g \widetilde{K} g^{-1} = \Gamma_g$.  
%Suppose that $h_1 \colon \Sbf_{\R} \ra \G^{\ad}_{\R}$ and $h_2 \colon \Sbf_{\R} \ra \G^{\ad}_{\R}$ are two distinct points on $\Gamma_g \backslash X_{\star}$ that map to the same point of $\widetilde{\Gamma}_g \backslash X'_{\star}$.  
\end{proof}

\noindent Applying the above lemma to both $e_V$ and $e_W$, 
we get an embedding $e \colon \Sh_{K}(\G, X) \hra \Sh_{\Ktilde}(\Gtilde, X')$.

\paragraph{Reciprocity law on special points.}\label{par:specialpts} Let $(\Tbf, x)$ be a special pair in the sense of \cite[p.103]{milne:shimura} where $\Tbf \subset \Gtilde_V$ is a torus. Then $x$ corresponds to a homomorphism $h_x \colon \Sbf_{\R} \ra \Gtilde_{V, \R}$ that factors through $\Tbf_{\R}$. Composing $h_{x, \C}$ with the map $\mathbf{G}_{m, \C} \ra \Sbf_{\C} \isom \mathbf{G}_{m, \C}  \times \mathbf{G}_{m, \C} $ given by $z \mapsto (z, 1)$ yields a co-character 
$\mu_x \colon\mathbf{G}_{m, \C} \ra \Tbf_{\C}$. This is the co-character giving the reciprocity law for the action of $\sigma \in \Gal(E^{\ab} / E)$ on $[g, x] \in \Sh_{\Ktilde}(\Gtilde, X')$. More precisely, $\mu_x$ gives rise to a homomorphism $r(\mu_x, \Tbf) \colon \Res_{E/\Q} \mathbf{G}_{m, E} \ra \Tbf$ of algebraic groups over $\Q$ defined by 
\begin{equation}\label{eq:rmux}
r(\mu_x, T)(t) := \prod_{\rho \colon E \hra \Qbar} \rho(\mu_x(t)), \qquad t \in \mathbf{A}^\times_E.  
\end{equation}
Note that the sum on the right-hand side is defined over $\Q$. This gives us a homomorphism $r_x \colon \mathbf{A}_{E}^\times \ra \Tbf(\Af)$. Now, if $s \in \mathbf{A}_{E}^\times$ is an id\`ele whose image in $\Gal(E^{\ab} / E)$ under the Artin map is exactly $\sigma$ then 
$\sigma [g, x] = [r_x(s) g, x]$.

\paragraph{Galois action on connected components.} The derived subgroup $\Hbf^{\der}$ of $\Hbf$ is simply connected since it is isomorphic (over $\overline{\Q}$) to $\SL_{n-1}^d$. Let $\Tbf^1 = \Hbf / \Hbf^{\der}$ (also isomorphic to $\Res_{F/\Q} \U(1)_{F}$) and let $\det \colon \Hbf \ra \Tbf^1$ be the determinant map. Let $X(\Hbf)$ be the Hermitian subdomain
$$
X(\Hbf) = \{ x \in X \colon h_x \colon \mathbf{S}_{\R} \ra \G_{\R} \text{ factors through } \Hbf_{\R}\}.
$$
Note that $X(\Hbf)$ is a connected Hermitian symmetric domain for the group $\Hbf(\R)$. 
One can apply \cite[Thm.5.17]{milne:shimura} (a simplified version of Deligne's results on the structure of the set of connected components \cite[\S 2.1.16]{deligne:shimura}) for the Shimura variety $\Sh_{K_H}(\Hbf, X(\Hbf))$ to get that 
\begin{equation}\label{eq:comp}
\pi_0 ( \Sh_{K_{\Hbf}}(\Hbf, X(\Hbf))) = \Hbf(\Q) \backslash \Hbf(\Af) / K_{\Hbf} = \Hbf(\Q) \Hbf^1(\Af)\backslash \Hbf(\Af) / K_{\Hbf}. 
\end{equation}
The latter is isomorphic to $\Tbf^1(\Af) / \Tbf^1(\Q) \det(K_{\Hbf})$ via the determinant map 
$\det \colon \Hbf(\Af) \ra \Tbf^1(\Af)$. 
%where we view $\det(K_{\Hbf})$ as a subgroup of $\Tbf^1(\Af)$. 
Using the canonical model of $\Sh_{K_{\Hbf}}(\Hbf, X(\Hbf))$, we get an action of $\Aut(\C/\iota(E))$ on 
$\pi_0(\Sh_{K_{\Hbf}}(\Hbf, X(\Hbf)))$. Following \cite[p.109]{milne:shimura}, for any $\sigma \in \Aut(\C / \iota(E))$, let 
$s \in \mathbf{A}_E^\times$ be such that $\Art_E(s) = \sigma |_{E^{\ab}}$. Consider the 
homomorphism 
\begin{equation}\label{eq:rf}
r_f \colon \mathbf{A}_E^\times \ra \U(F)(\mathbf{A}_F) \isom \Tbf^1(\mathbf{A}_\Q) \xra{\proj} \Tbf^1(\Af), 
\end{equation}
defined by 
$r_f(s) = \overline{s}_f / s_f$ where $s_f$ denotes the finite part of $s \in \mathbf{A}_E^\times$. 
Note that this is precisely the homomorphism $r$ defined on \cite[p.109]{milne:shimura} for the Shimura datum  
$(\Hbf, X(\Hbf))$ followed by the projection map $\proj$. Indeed, if $h := \det \circ h_W \colon \mathbf{S}_{\R} \ra \Tbf^1_{\R}$ then the co-character $\mu_{h} \colon \mathbf{G}_{m, \C} \ra \Tbf^1_{\C}$ associated to $h_{\C} \colon  \Sbf_{\C} \ra \Tbf^1_{\C}$ by precomposing with $\mathbf{G}_{m, \C} \hra \mathbf{G}_{m, \C} \times \mathbf{G}_{m, \C} \isom \Sbf_{\C}$, $z \mapsto (z, 1)$ is defined over the reflex field $E$ and hence, can be evaluated on $\mathbf{A}_E^\times$. This gives rise to the homomorphism $r(\Tbf^1, \mu_h) \colon \mathbf{A}_E^\times \ra \Tbf^1(\mathbf{A}_{\Q}) \isom \U(1)(\mathbf{A}_F)$ from \cite[p.109]{milne:shimura} (the analogue of \eqref{eq:rmux} for $\mu_h$) defined by 
$$
r(\Tbf^1, \mu_h)(s) := \mu_h(s) \overline{\mu_h(s)},  \qquad s \in \mathbf{A}_E^\times.  
$$
Since $\mu_h(s) = s^{-1} \in \U(1)(\mathbf{A}_E)$ where we have used the identification $\U(1)(\mathbf{A}_E) \isom \mathbf{A}_E^\times$. Under this identification, the conjugation action on the left-hand side corresponds to the action $s \mapsto \overline{s}^{-1}$ on the right-hand side and hence, $r(\Tbf^1, \mu_h)(s) = \overline{s} / s \in \U(1)(\mathbf{A}_F) \isom \Tbf^1(\mathbf{A}_\Q)$ and hence, the projection of $r(\Tbf^1, \mu_h)$ to $\Tbf^1(\Af)$ coincides with $r_f$ defined in \eqref{eq:rf}. It now follows from \cite[p.109]{milne:shimura} and \eqref{eq:comp} that if $\cC \in \pi_0(\Sh_{K_{\Hbf}}(\Hbf, X(\Hbf)))$ is represented by 
$t \in \Tbf^1(\mathbf{A}_{f})$ then 
\begin{equation}\label{eq:galoisconn}
\cC^\sigma  = [r_f(s) t] \in \Tbf^1(\mathbf{A}_{f}) / \Tbf^1(\Q) \det(K_{\Hbf}). 
\end{equation}

\subsection{Special cycles on $\Sh_K(\G, X)$}\label{subsec:speccyc}
\paragraph{The cycles $\cZ_K(g)$.} Given $g \in \G(\Af)$, consider the cycle $\cZ_K(g)$ that is the image of $gK \times Y$ in $\Sh_K(\G, X)(\C) = \G(\Q) \backslash (\G(\Af)/K \times X)$. 
Let $\cZ_K(\G, \Hbf) = \{\cZ_K(g) \colon g \in \G(\Af)\}$ be the space of all such cycles. We have a map 
$$
\cZ_K(\bullet) \colon \Hbf(\Q) \backslash \G(\Af) / K \ra \cZ_K(\G, \Hbf). 
$$ 
The map is certainly surjective by definition of $\cZ_K(\G, \Hbf)$.  

\begin{lem}\label{lem:ident}
(i) We have 
%$\Nbf_{\G(\Q)}(\Hbf(\Q)) = \Hbf(\Q)(\underbrace{1 \times \U(D)}_{\subset \U(V)} \times \underbrace{\Zbf_{\Hbf}(\Q))}_{\subset \U(W)} \subset \G(\Q)$, 
$\Nbf_{\G}(\Hbf) = \Hbf \cdot \Zbf_{\G} \subset \G$. 

\noindent (ii) The map $\cZ_K(\bullet) \colon \Hbf(\Q) \backslash \G(\Af) / K \ra \cZ_K(\G, \Hbf)$ is surjective and induces a bijection 
$$
\cZ_K(\bullet) \colon \Nbf_{\G(\Q)}(\Hbf(\Q)) \backslash \G(\Af) / K \ra \cZ_K(\G, \Hbf).
$$ 
\end{lem}

\begin{proof}
For (i), let $R$ be any $\Q$-algebra. Clearly, $\Hbf(R) \Zbf_{\G}(R) \subset \Nbf_{\G}(\Hbf)(R)$. Let $g \in \Nbf_{\G}(\Hbf)(R)$. For any 
$h \in \Hbf(R)$, $g h g^{-1}$ fixes $D_R = D \otimes_{\Q} R$ pointwise, i.e., $\Hbf(R)$ fixes $g^{-1}D_R$ 
pointwise. But the only line in $V_R = V \otimes_{\Q} R$ fixed pointwise by $\Hbf(R)$ is $D_R$ itself. Hence, $g^{-1}D_R = D_R$. But it is not hard to check that the subgroup of $\G(R)$ that fixes $D_R$ (not pointwise, but as a set) is precisely $\Hbf(R) \Zbf_{\G}(R)$, so $g \in \Hbf(R) \Zbf_{\G}(R)$.  
%and so, $g = ((h, u), h')$. Now, for any $\widetilde{h} \in \Hbf(\Q)$, $g ((\widetilde{h}, 1), \widetilde{h}) g^{-1} \in \Hbf(\Q)$, i.e.,  
%$h' \in h \Zbf_{\Hbf}(\Q)$ which completes the proof. 

For (ii), the condition $\cZ_{K}(g') = \cZ_{K}(g'')$ is equivalent to $\G(\Q) (g'K, Y) = \G(\Q) (g'' K, Y)$. This is equivalent to the following statement: 
\begin{equation}\label{eq:cond}
\forall y \in Y, \qquad \exists g_{\Q} \in g' K {g''}^{-1} \cap \G(\Q) \text{ such that } y \in g_{\Q} Y. 
\end{equation}
The latter means that $\ds Y = \bigcup_{g_\Q \in \G(\Q) \cap g' K {g''}^{-1}} (Y \cap g_\Q Y)$, i.e., $Y$ is a countable union of sets $Y \cap g_\Q Y$.

\vspace{0.1in}

\noindent {\bf Claim:} There exists $g_{\Q} \in g' K {g''}^{-1}$ such that $g_{\Q} Y = Y$. 

\vspace{0.1in}

%\noindent We will prove this in two different ways: 1) using the fact that the Riemann manifolds $Y$ and $g_{\Q} Y$ are totally geodesic and 2) via a more general argument about real manifolds. 

%\noindent {\bf First argument:} 

We prove the claim using the fact that the Riemann manifolds $Y$ and $g_{\Q} Y$ are totally geodesic. 
Indeed, Baire's category theorem implies that there exists $g_\Q \in \G(\Q) \cap g' K {g''}^{-1}$ such that 
$Y \cap g_{\Q} Y$ contains an open set $U$ of $Y$. We claim that $Y = g_{\Q} Y$. 
Observe that $Y \cap g_{\Q} Y \subset Y$ is a totally geodesic submanifold. 
%Let $U \subset Y \cap g_{\Q} Y$ be the open set for $Y$. 
Take any point $y \in U$
%$y \in Y \cap g_{\Q} Y$ 
and consider any geodesics $\gamma$ through $y$ in $Y$. Since the germ $[\gamma]$ of that geodesics is contained in $U$, by the extension property of geodesics, the entire geodesics is contained in $Y \cap g_{\Q} Y$. Now, using that $Y$ is connected, it follows that $Y$ is contained in 
$Y \cap g_{\Q} Y$ (indeed, any point $x \in Y$ can be connected by a geodesic to $y$ which, by the above argument, is necessarily in $Y \cap g_{\Q} Y$, i.e., $x \in Y \cap g_{\Q} Y$). 
Thus, there exists $g_\Q \in \G(\Q) \cap g' K {g''}^{-1} $ such that $g_\Q Y = Y$. 
\vspace{0.1in}

%\noindent {\bf Second argument:} Given $y \in Y$, define $S(y) = \{g \in \G(\Q) \colon gy \in Y\}$ and let $S(Y) = \{g \in \G(\Q) \colon gY \subset Y\}$. Clearly, $S(y) \supseteq S(Y)$. Using \eqref{eq:cond}, it suffices to show that there exists $y \in Y$ for which $S(y) = S(Y)$. For that, call $g_{\Q} \in \G(\Q)$ proper if $g_{\Q} Y \cap Y \subsetneq Y$. It suffices to take 
%$\ds y \in Y - \bigcup_{g_{\Q} \in \G(\Q) \text{ - proper}} Y \cap g_{\Q} Y$. 
%%$\ds y \in Y - \bigcup_{\substack{g_{\Q} \in g' K {g''}^{-1} \cap \G(\Q) \\ g_{\Q} \text{ - proper}}} Y \cap g_{\Q} Y$. 
%Note that the latter is non-empty. Indeed, $g_{\Q}$ is proper if and only if $g_{\Q} Y \cap Y$ has an empty interior (as a set, with respect to the topology of $Y$). Baire's theorem then implies the non-emptiness.    

\vspace{0.1in}

\noindent We next compute the stabilizer $S(Y) = \Stab_{\G(\Q)}(Y)$ by first computing  $\Stab_{\G(\Q)}(\Delta(W))$. 

\vspace{0.1in}

%\noindent It thus remains to compute the stabilizer $S(Y) = \Stab_{\G(\Q)} (\Delta(Y))$. 

\noindent {\bf Computing $\Stab_{\G(\Q)}(\Delta(W))$:} The condition 
$(g_{V, \Q}, g_{W, \Q}) \in \Stab_{\G(\Q)}(\Delta(W))$ means that for any $w \in W$, $(g_{V, \Q}, g_{W, \Q}) (\Delta_V(w), w) = (\Delta_V(w'), w')$ for some $w' \in W$, i.e., $g_{V, \Q} \Delta_V(g_{W, \Q})^{-1}$ fixes 
$\Delta_V(w)$ for every $w \in W$. This is equivalent to $g_{V, \Q} = (g_{W, \Q}, u) \in \U(W) \times \U(D) \subset \G(V)$ for some $u \in \U(D)$, i.e., 
$$
\Stab_{\G(\Q)}(\iota(W)) = \Hbf(\Q) (\underbrace{1 \times \Res_{F/\Q} \U(D)(\Q)}_{\subset \G_V(\Q)} \times \underbrace{1}_{\subset \G_W(\Q)}) \subset \G(\Q).
$$  

\vspace{0.1in}

\noindent {\bf Computing $\Stab_{\G(\Q)}(Y)$:} Let $(g_{V, \Q}, g_{W, \Q}) \in \Stab_{\G(\Q)}(\Delta(X_W))$. Equivalently, for any negative-definite line $\ell \in X_W$, there exists a negative definite line $\ell' \in X_W$ such that 
$$
(g_{V, \Q}, g_{W, \Q}) (\Delta_V(\ell), \ell) = (\Delta_V(\ell'), \ell') \Longleftrightarrow \ell' = g_{W, \Q} \ell \text{ and }  
\Delta_V(\ell') = g_{V, \Q} \Delta_V(\ell). 
$$
The latter is equivalent to $g_{V, \Q} \Delta_V(g_{W, \Q})^{-1}$ fixing the negative-definite line $\Delta_V(\ell) \in X_V$. This means that 
$$
\Stab_{\G(\Q)}(\Delta(X_W)) = \Hbf(\Q)(\underbrace{1 \times \Res_{F/\Q} \U(D)(\Q)}_{\subset \G_V(\Q)} \times \underbrace{\Zbf_{\Hbf}(\Q)}_{\subset \G_W(\Q)}) = \Hbf(\Q) \Zbf_{\G}(\Q) \subset \G(\Q). 
$$
%$\Stab_{\G(\Q)}(\Delta(X_W)) = \Hbf(\Q) \Zbf_{\G}(\Q) \subset \G(\Q)$, i.e., by (i), $\Stab_{\G(\Q)}(Y) = \Nbf_{\G}(\Hbf)(\Q)$.  
%\noindent Finally, the stabilizer computation shows that $\cZ_K(g') = \cZ_K(g'')$ if and only if $g'' \in \Stab_{\G(\Q)}(Y)g'K$, i.e., 
%the kernel of $\cZ_K(\bullet)$ is isomorphic to $\Stab_{\G(\Q)}(Y) / \Hbf(\Q) \isom \U^1(\Q) \times \U^1(\Q)$.  

To conclude (ii), observe that $\cZ_K(g') = \cZ_K(g'')$ if and only if $g'' \in \Stab_{\G(\Q)}(Y)g'K$, i.e., 
the map 
$$
\cZ_K(\bullet) \colon \Stab_{\G(\Q)}(Y) \backslash G(\Af) /  K \ra \cZ_K(\G, \Hbf) 
$$
is a bijection. 
\end{proof}

\paragraph{Description of $\cZ_K(g)$ in terms of connected components and Galois action.}\label{par:concomp} Alternatively, the cycle $\cZ_K(g)$ can be described as follows: 
\begin{lem}\label{lem:altern}
If $K_{g, \Hbf} = gKg^{-1} \cap \Hbf(\Af)$ for $g \in \G(\Af)$ then $\cZ_K(g)$ is the image of 
the connected component $\Hbf(\Q) (K_{g, \Hbf} \times Y)$ of $\Sh_{K_{g, \Hbf}}(\Hbf, Y)$ under the maps\footnote{Recall that all of these maps are defined over $E$.} 
\begin{equation}\label{eq:embed}
\Sh_{K_{g, \Hbf}}(\Hbf, Y) \hra \Sh_{gKg^{-1}}(\G, X) \xra{[\cdot g]} \Sh_K(\G, X). 
\end{equation}
\end{lem}

\begin{proof}
This follows immediately by chasing through the definitions of the maps: indeed, for any element $(h_{\Q} gkg^{-1}, h_{\Q} y) \in \Hbf(\Q)(K_{g, \Hbf} \times Y)$ (here, $h_{\Q} \in \Hbf(\Q), g \in \G(\Af), k \in K$ and $y \in Y$) maps to the element $[gk, y]_K \in \Sh_K(\G, X)$. Conversely, any $[gk, y]_K \in [gK \times Y]$ is the image of $[gk' g^{-1}, y]$ for any $k' \in K_{\Hbf}$. 
\end{proof}

We use that description to provide the Galois action on $\cZ_K(\G, \Hbf)$. Let $\Ver_{E / F} \colon \Gal(F^{\ab} / F) \ra \Gal(E^{\ab} / E)$ be the transfer map (the \emph{Verlagerung} map) and let $E[\infty]$ be the abelian extension of $E$ determined by the image of $\Ver_{E /F}$. The Artin map $\Art_E \colon \Tbf(\Af) \ra \Gal(E^{\ab} / E)$ induces an isomorphism 
$\Art^1_E \colon \Tbf^1(\Af) / \Tbf^1(\Q) \xra{\sim} \Gal(E[\infty] / E)$. 
%via the following diagram 
%\[
%\xymatrix {
%\Zbf(\Af) \ar[r] \ar[d]^{\Art_F} & \Tbf(\Af) \ar[d]^{\Art_E} \ar[r] & \Tbf^1(\Af) \ar[d]^{\Art_E^1} \\
%\Gal(F^{\ab} / F) \ar[r]^{\Ver_{E/F}} & \Gal(E^{\ab} / E) \ar[r] & \Gal(E[\infty] / E).   \\ 
%}
%\] 
Consider the group $\Nbf_{\G}(\Hbf)(\Q) \Hbf(\Af)$ and the (normal) subgroup $\Nbf_{\G}(\Hbf)(\Q)\Hbf^1(\Af)$. The quotient is isomorphic to 
$$
\frac{\Nbf_{\G}(\Hbf)(\Q) \Hbf(\Af)}{\Nbf_{\G}(\Hbf)(\Q) \Hbf^1(\Af)} \isom \frac{\Tbf^1(\Af)}{\Tbf^1(\Q)} \xra{\sim} \Gal(E[\infty] / E), 
$$
where the last map is $\ds \Art_E^1 \colon \frac{\Tbf^1(\Af)}{\Tbf^1(\Q)} \xra{\sim} \Gal(E[\infty] / E)$. 
We have a map 
$$
{\det}^* \colon \Nbf_{\G}(\Hbf)(\Q) \Hbf(\Af) \twoheadrightarrow \Nbf_{\G}(\Hbf)(\Q) \Hbf(\Af)  /\Nbf_{\G}(\Hbf)(\Q) \Hbf^1(\Af)  \ra \Tbf^1(\Af) / \Tbf^1(\Q)
$$ 
induced by the determinant map. 

\begin{lem}\label{lem:galdesc}
For any $\sigma \in \Gal(E^{\ab} / E)$ and any element $h_\sigma \in \Nbf_{\G}(\Hbf)(\Q) \Hbf(\Af)$ that satisfies   
$\Art^1_E(\det^*(h_\sigma)) = \sigma |_{E[\infty]}$, we have
$$
\cZ_K(g)^{\sigma} = \cZ_K(h_\sigma g). 
$$
\end{lem}

\begin{proof}
We use Lemma~\ref{lem:altern} and equations \eqref{eq:comp} and \eqref{eq:galoisconn}. 
For $\sigma \in \Gal(E^{\ab} / E)$ and $s \in \mathbf{A}_{E}^\times$ such that $\Art_E(s) = \sigma$, consider $r_f(s) \in \Tbf^1(\Af)$. It follows from \cite[Lem.5.21]{milne:shimura} that $\det \colon \Hbf(\Af) \ra \Tbf^1(\Af)$ is surjective, so there exists $h_s$ 
such that $\det(h_s) = r_f(s)$ and hence, 
\begin{equation}\label{eq:shimrec}
\cZ_K(g)^\sigma = \cZ_K(h_s g).
\end{equation}
We thus have to check that $\cZ_K(h_\sigma g) = \cZ_K(h_s g)$. The latter is a consequence of 
the fact that $\det^* (h_s)  = \det^* (h_\sigma)$ and hence, $h_s^{-1} h_\sigma \in \Nbf_{\G}(\Hbf)(\Q) \Hbf^1(\Af)$. To complete the proof, we use 
$$
\cZ_K(\G, \Hbf) = \Nbf_{\G}(\Hbf)(\Q) \backslash \G(\Af) / K = \Nbf_{\G}(\Hbf)(\Q) \Hbf^1(\Af) \backslash \G(\Af) / K, 
$$
the latter being a consequence of strong approximation. 
%Equivalently, $\cZ_K(g)^\sigma$ is the image of the connected component $\Hbf(\Q)[K_{h_s g, \Hbf} \times Y]$ under \eqref{eq:embed}, but for the 
%compact open subgroup $K_{h_s g, \Hbf}$ instead of $K_{g, \Hbf}$. 
\end{proof}

\paragraph{Galois orbits and local conductors.}\label{par:galorbits-localcond}
Lemma~\ref{lem:galdesc} shows that $\Gal(E^{\ab} / E)$ acts on $\cZ_K(\G, \Hbf)$ via the left action of $\Nbf_{\G}(\Hbf)(\Q) \Hbf(\Af)$ on $\Nbf_{\G}(\Hbf)(\Q) \backslash \G(\Af) / K$,  
%We now reduce the computation of the Galois action on the set $\cZ_K(\G, \Hbf)$ of cycles on our $(2n-3)$-fold 
%$\Sh_{K}(\G, X)$ at allowable inert places to a local computation of stabilizers of pairs of self-dual lattices in local hermitian spaces. Fix an allowable inert place $\tau$ of $F$. 
so Lemma~\ref{lem:ident} yields
\begin{equation}\label{eq:orbits}
\Gal(E^{\ab} / E) \backslash \cZ_K(\G, \Hbf) \isom \Nbf_{\G}(\Hbf)(\Q) \Hbf(\Af) \backslash \G(\Af) / K = 
\Hbf(\Af) \backslash \G(\Af) / K. 
%\xra{p_\tau} 
%N_{G_\tau}(H_\tau) \backslash G_\tau / K_\tau. 
\end{equation}
%\begin{equation}\label{eq:orbits}
%\Gal(E^{\ab} / E) \backslash \cZ_K(\G, \Hbf) \twoheadleftarrow \Hbf(\Af) \backslash \G(\Af) / K \isom 
%H_\tau \backslash G_\tau / K_\tau \times H^{(\tau)} \backslash G^{(\tau)} / K^{(\tau)}. 
%\end{equation}
For the last equality, we have used that $\Nbf_{\G}(\Hbf)(\Q) \Hbf(\Af) K = \Hbf(\Af) K$ which can be seen as follows: pick any inert place  
$\tau$ of $F$ for which $K_p \subset \G(\Q_p)$ is hyperspecial where $p$ is the prime below $\tau$ and write
$$
\Nbf_{\G}(\Hbf)(\Q) \Hbf(\Af) K \subseteq \Nbf_{\G}(\Hbf)(\Q_p) \Hbf(\Af) K = \Zbf_{\G}(\Q_p) \Hbf(\Q_p) \Hbf(\Af) K = \Hbf(\Af) K, 
$$ 
where the second equality is a consequence of Lemma~\ref{lem:ident}(i) and the latter uses $\Zbf_{\G}(\Q_p) \subset K_p$.  

%Here, $G_\tau = \U(V)(F_\tau) \times \U(W)(F_\tau)$ and $H_\tau$ is the diagonal image of $\U(W)(F_\tau)$ inside $G_\tau$ and $N_{G_\tau}(H_\tau)$ is the normalizer. The argument in the proof of Lemma~\ref{lem:ident} shows that $N_{G_\tau}(H_\tau) = H_\tau \cdot Z_{G_\tau}$ where $Z_{G_\tau} \subset G_\tau$ is the center. But $Z_{G_\tau} \subset K_\tau$ and hence, 
%$$
%N_{G_\tau}(H_\tau) \backslash G_\tau / K_\tau = H_\tau \backslash G_\tau / K_\tau. 
%$$

For a special cycle $\xi \in \cZ_K(\G, \Hbf)$, the field of definition $E(\xi)$ can be calculated using 
Lemma~\ref{lem:galdesc}: $E(\xi)$ is the subfield of $E[\infty]$ that satisfies 
$$
\Gal(E[\infty] / E(\xi)) = \Art^1_E \left ( {\det}^{*} \left ( \Nbf_{\G}(\Hbf)(\Q) gK g^{-1} \cap \Nbf_{\G}(\Hbf)(\Q) \Hbf(\Af)\right )\right ). 
$$
The reciprocity law described in Section~\ref{par:concomp} allows us to compute the completion $E(\xi)_\tau$ of $E(\xi)$ at the place of $E(\xi)$ above $\tau$ (determined by the fixed embedding $\iota_\tau \colon \overline{E} \hra \overline{E}_\tau$) as follows: let $x_\tau \in G_\tau / K_\tau$ 
be the $\tau$-component of an element of the right-hand side of \eqref{eq:orbits} that maps to the orbit 
$\Gal(E^{\ab} / E) \xi$. Since both $K_{V, \tau}$ and $K_{W, \tau}$ are hyperspecial maximal compact subgroups, 
$G_\tau / K_\tau$ is in bijection with the pairs $(L_{V, \tau}, L_{W, \tau})$ of self-dual hermitian lattices in 
$V_\tau$ and $W_\tau$, respectively. If $\cL_\tau$ denotes the set of all such pairs of lattices, then $H_\tau$ acts on $\cL_\tau$ and the element $x_\tau$ corresponds to an $H_\tau$-orbit denoted by $[L_{V, \tau}, L_{W, \tau}]$. In this case, by the reciprocity law \eqref{eq:shimrec}, the local extension $E(\xi)_\tau  / E_\tau$ corresponds (by local class field theory) to the local norm subgroup that is the preimage (in $\cO_{E_\tau}^\times$) under the map $r_\tau \colon E_\tau^\times \ra \U(1)(F_\tau), \ \ s_\tau \mapsto \overline{s}_\tau / s_\tau$ of the image of $\Stab_{H_\tau}(L_{V, \tau}, L_{W, \tau})$ under the determinant map $\det \colon H_\tau \ra \U(1)(F_\tau)$. This local norm subgroup turns out to be determined precisely by a local order of $\cO_{E, \tau}$. If $\varpi \in \cO_{E_\tau}$ is a uniformizer, recall that for $c \geq 0$, the local order of conductor $\varpi^c$ is $\cO_{c, \tau} = \cO_{F_\tau} + \varpi^n \cO_{E_{\tau}}$. In Section~\ref{sec:galois-cycles}, we will determine the conductor of this local order (we define $\bc_\tau(\xi) := \varpi^c$ and refer to $\bc_\tau(\xi)$ as the local conductor of $\xi$ at $\tau$) in terms of the distance functions on the corresponding Bruhat--Tits buildings at $\tau$.

\begin{lem}\label{lem:locgal}
If $\det \colon H_\tau \ra \U(1)(F_\tau)$ is the local determinant map at $\tau$ then the completion $E(\xi)_\tau$ is the subfield of $E[\infty]_\tau$ whose local Galois group 
$\Gal(E[\infty]_\tau / E(\xi)_\tau)$ is 
$$
\Art_\tau^1 \left ( \det \left ( g_\tau K_\tau g_{\tau}^{-1} \cap H_\tau \right ) \right ) = 
\Art_\tau^1 \left ( \det \left ( \Stab_{H_\tau} (L_{V, \tau}, L_{W, \tau}) \right ) \right ). 
$$
\end{lem}

\comment {

\paragraph{Geometric Hecke operators.}
If $g \in G_\tau$, consider the following correspondence on Shimura varieties 
\[
\xymatrix {
& \Sh_{K \cap gKg^{-1}}(\G, X) \ar[dl] \ar[dr] &  \\
\Sh_{K}(\G, X) & & \Sh_{K}(\G, X),  \\ 
}
\] 
where the first map is the natural map $[h, x]_{K\cap gKg^{-1}} \mapsto [h, x]_K$ and the second map is the map 
$[h, x]_{K\cap gKg^{-1}} \mapsto [hg, x]_K$ (it is easy to check that this gives a well-defined map). Note that when $K^(\tau)$ is sufficiently small, these maps are \'etale and hence, pull-back and push-forward are defined on cohomology. 
Such a correspondence thus 
yields two operators that we denote by $\cT(g)$:
\begin{enumerate}
\item A map on special cycles: 
$$
\cT_{\geom}(g) \colon \Z[\cZ_K(\G, \Hbf)] \ra \Z[\cZ_K(\G, \Hbf)]. 
$$
\item A map on $\ell$-adic \'etale cohomology 
$$
\cT(g) \colon \H^*_{\et}(\overline{\Sh_{K}(\G, X)}_{\Qbar}; \Q_\ell) \ra \H^*_{\et}(\overline{\Sh_{K}(\G, X)}_{\Qbar}; \Q_\ell)
$$ 
\end{enumerate}
We thus get an action of the local Hecke algebra $\cH(G_\tau, K_\tau)$ on both $\cZ_K(\G, \Hbf)$ and on cohomology. 

\dimnote{We need a proper comparison between these geometric Hecke operators and Hecke operators on the level of 
buildings. Note that for special points the comparison is quite easy, whereas for cycles, one has to be careful with the degrees as the first map pulls back a cycle to several cycles. }

\paragraph{Adelic Hecke operators and comparison.} The identification 
$$
\cZ_K \colon \Z[\cZ_K(\G, \Hbf)] \ra \Z[N_{\G(\Q)}(\Hbf(\Q)) \backslash \G(\Af) / K].
$$
On the left-hand side, we have the operators $\cT_{\geom}(g)$. On the right hand side, we have the action of the Hecke operators $\cT(g) \in \cH(\G, K)$ given by \dimnote{Finish.} 

We need to compare these two operators in order to switch back and forth between the geometric and adelic interpretation\footnote{As we will see, the geometric one is more convenient when working with PEL Shimura varieties whereas the adelic one gives allows for computations with Bruhat--Tits theory.}

The Hecke algebra $\cH(\G, K) = \End_{\Z[\G(\Af)]}(\Z[\G(\Af) / K])$ acts on both 
$\Z[\cZ_K(\G, \Hbf)]$ as follows: if $g \in \G(\Af)$, if $[g] \in \cH(\G, K) = \Z [K \backslash \G(\Af) / K]$ is the corresponding double coset and if $\ds K g K = \bigsqcup g_i K$ for $g_i \in \G(\Af)$ then for $h \in \G(\Af)$, the corresponding endomorphism is 
$\ds [h] \mapsto \sum [h g_i]$. Here, $[h]$ denotes the class of $h$ in $\cZ_K(\G, \Hbf)$.

\subsection{Degeneracy maps and changing the level}
We need to discuss compatiblity as we change the level $K$. Suppose that $K' \subset K$ are two compact open subgroups of $\G(\Af)$. We have Shimura varieties and a natural degeneracy map 
$$
f \colon \Sh_{K'}(\G, X) \ra \Sh_{K}(\G, X). 
$$
One may want to describe the pullback $f^* \cZ_K(g)$ as well as the push-forward $f_*(\cZ_{K'}(g))$ of two special cycles $\cZ_K(g)$ and $\cZ_{K'}(g)$. 

\paragraph{Irreducible components of $f^*(\cZ_K(g))$.}
At least on complex points, we get 
$$
f^*(\cZ_K(g)) = \bigcup_{k \in K / K'} \G(\Q) (gk \times Y) K' = \bigcup_{k \in K / K'} \cZ_{K'}(gk). 
$$
We thus need to identify when $\cZ_{K'}(gk_1) = \cZ_{K'}(gk_2)$ for $k_1, k_2 \in K$ representatives of two distinct $K'$-cosets. The latter is equivalent to 
$$
\cZ_{K'}(gk_1) = \cZ_{K'}(gk_2) \Longleftrightarrow N_{\G(\Q)}(\Hbf(\Q)) gk_1 K' = N_{\G(\Q)}(\Hbf(\Q)) gk_2 K', 
$$ 
i.e., $k_2 \in g^{-1} N_{\G(\Q)}(\Hbf(\Q)) g k_1 K'$, i.e., the connected components are indexed by the (finite)
double quotient $g^{-1} N_{\G(\Q)}(\Hbf(\Q)) g \backslash K / K'$. Hence, if $S$ is a set of double coset representatives for the latter then 
$$
f^*(\cZ_K(g)) = \bigsqcup_{k \in S} \cZ_{K'}(gk). 
$$
It remains to calculate multiplicities and degrees. By the latter, we mean the degree of the map 
$f \colon \cZ_{K'}(gk) \ra \cZ_K(g)$. 

\begin{rem}
The last point is a major difference between the case case of 0-dimensional cycles and higher-dimensional cycles. Note that for the case of special points on modular curves, we are not talking about degrees of the map $f$ as it simply sends a point to a point. Here, it is a map sending an irreducible curve onto an irreducible curve and hence, it has a degree that we should explicitly calculate. 
\end{rem}

\paragraph{The degree of $f\colon \cZ_{K'}(gk) \ra \cZ_K(g)$.} To calculate the degree of this map, take a $[g, x] \in \cZ_K(g)$. We will describe the set $f^{-1}([g, x]) \cap \cZ_{K'}(gk)$. Indeed, 
$$
f^{-1}([g, x]) = \{[gk, x]_{K'} \colon k \in K\}. 
$$
Moreover, 
$$
[gk_1, x]_{K'} = [gk_2, x]_{K'} \Longleftrightarrow \Stab_{\G(\Q)}(x) gk_1 K' = \Stab_{\G(\Q)}(x) gk_2 K'. 
$$
The latter occurs if and only if $k_1$ and $k_2$ have the same image in the double coset space 
$$
g^{-1} \Stab_{\G(\Q)}(x) g \cap K \backslash K / K'. 
$$
We now intersect with $\cZ_{K'}(gk)$ to get that the 
$$
g^{-1} \Stab_{\G(\Q)}(x) g \cap K \backslash K / K' \ra g^{-1} \Hbf(\Q) g \cap K \backslash K / K'. 
$$

\dimnote{We may get that the degree is always 1? Double check this.}

\paragraph{The multiplicity of $\cZ_{K'}(gk)$ in $f^*(\cZ_K(g))$.}

The following lemma provides a description: 

\begin{lem}
Let $S \subset \G(\Af)$ be a set of double coset representatives for \dimnote{Define properly the double quotient.}. Then 
$$
f^* \cZ_{K}(g) = \sum_{k \in S} m_{K'}(g)\cZ_{K'}(gk), 
$$ 
where the multiplicities $m_{K'}(gk)$ satisfy 
$$
\deg f = \sum_{k \in S} m_{K'}(gk) [\cZ_{K'}(gk) : \cZ_K(g)]. 
$$
\end{lem}

}

%It remains to describe the action of $c \in \Aut(\C/\rho_1(F))$ on $\cZ_K(g)$. Indeed, $c \cdot \cZ_K(g)$ is the image of $gK \times Y(c\cdot \rhotilde_1)$ in $\Sh_K(\G, X)$. We can then use the following lemma giving us the action of the complex conjugation 
%automorphism $c$: 

%\begin{lem}
%Let $\lambda \in \Nbf_{\G(\Q)}(\Hbf(\Q)) - \Hbf(\Q)$ be an arbitrary element. Then 
%\begin{equation}\label{eq:complexconj}
%c\cdot \cZ_K(g) = \cZ_K(\lambda g). 
%\end{equation}
%\end{lem}

%\begin{proof}

%\end{proof}

%We summarize this result in the following proposition: 

%\begin{prop}\label{prop:galcycles}
%Let $g \in \G(\Af)$ and $\sigma \in \Aut(\C / \rho_1(F))$. For any $\lambda \in \Nbf(\Q)\Hbf(\Af)$ for which 
%$\Art^1_{E/F} \circ \det(\lambda) = \sigma|_{E[\infty]}$, one has 
%\begin{equation}
%\sigma \cZ_K(g) = \cZ_K(\lambda g). 
%\end{equation} 
%\end{prop}

%\begin{proof}
%\end{proof}

\comment{
\subsection{Construction and cohomological trivialization of the classes}\label{subsec:abel-jacobi}

\paragraph{Abel--Jacobi maps \`a la Nekov\'a\v{r}.} Recall the cycle class map 
$$
\cl \colon \CH^n(\Sh_K(\G, X); \Z) \ra \H^{2n}_{\et}(\overline{\Sh_K(\G, X)}; \Z_p(n)) 
$$
that maps the Chow group to the arithmetic \'etale cohomology group $\H^{2n}_{\et}(\Sh_K(\G, X); \Z_p(n))$. One gets a link between the arithmetic \'etale cohomology group and the geometric 
\'etale cohomology group $T_j = \H^j_{\et}(\overline{\Sh_K(\G, X)}; \Z_p)$ 
via the Hochshield--Serre spectral sequence 
$$
\leftexp{\et}{E_2^{i, j}} := \H^i(E, T_j(n)). 
$$
It follows from Deligne's thesis that the spectral sequence degenerates at $E_2$, i.e., $\leftexp{\et}{E_2^{i, j}} = \leftexp{\et}{E_\infty^{i, j}}$ (see \cite{nekovar:ajmaps}). Consequently, one obtains other cycle class maps 
$$
\cl_1 \colon \CH^n_0(\Sh_K(\G, X); \Z) := \ker(\cl) \ra \H^1(E, H^{2n-1}_{\et}(\Sh_K(\G, X); \Z_p(n)). 
$$
To get an Euler system in the case of, e.g., $n = 2$, we use $\cl_1$ as our Abel--Jacobi map 
$$
\AJ \colon \CH_0^{2}(\Sh_K(\G, X); \Z) \ra \H^1(E, \H^{3}_{\et}(\overline{\Sh_K(\G, X)}, \Z_p(2)),  
$$
%where $\CH_0^{2}(\overline{\Sh_K(\G, X)})$ is the kernel of the cycle map 
%$$
%\cyc \colon \CH^2(\overline{\Sh_K(\G, X)}) \ra \H^0(E, \H^4_{\et}(\overline{\Sh_K(\G, X)}, \Z_p(2)))
%$$
%and $\AJ$ is obtained via the Hochshield--Serre spectral sequence. 
Thus, in order to construct elements of $\H^1(E, \H^{3}_{\et}(\overline{\Sh_K(\G, X)}, \Z_p(2))$ out of 
the special cycles that belong to $\CH^2(\overline{\Sh_K(\G, X)})$, one needs a 
way of cohomologically trivializing these cycles. The cohomological trivialization is 
quite obvious in the case of $\GL_2$: given a Heegner point $x_c \in X_0(N)$, one 
considers the degree-zero divisor class $[(x_c) - (i \infty)] \in J_0(N)(\C)$. 
For higher dimensions, this is not so obvious since we do not necessarily have the 
correct analogue of embedding a curve into its Jacobian. 
Before we do this, we discuss some properties of the kernel and the image of the Abel--Jacobi map.

\paragraph{The kernel and the image of $\AJ$.} There are conjectures about the kernel and the image of the $p$-adic 
Abel--Jacobi map $\AJ_p = \AJ \otimes \Q_p$ that are part of the general Bloch--Kato conjectures \cite{bloch-kato}.  
\begin{conj}[(Bloch--Kato)]
The $p$-adic Abel--Jacobi map satisfies 
$$
\ker(\AJ_p) = 0 \qquad{\text{and}} \qquad \text{im}(\AJ_p) = \H^1_{f}(E, V_{2n-1}(n)), 
$$
where $\H^1_{f}(E, V_{2n-1}(n))$ is the Bloch--Kato Selmer group. 
\end{conj}
\noindent The Bloch--Kato Selmer groups are defined using the local conditions 
\[
\H_{f}^1(E_v, V_{2n-1}(n)) := 
\begin{cases}
\H_{\ur}^1(E_v, V_{2n-1}(n)) = \H^1(E_v^{\ur} / E_v, V_{2n-1}(n)^{I_v}) & \text{if } v\nmid p, \\
\ker \left ( \H^1(E_v, V_{2n-1}(n)) \ra \H^1(E_v, V_{2n-1}(n) \otimes_{\Q_p}\mathbf{B}_{\text{cris}})\right ) & \text{if } v\mid p.   
\end{cases}
\]
Similarly, at each place $v$ of $E$, we have the local Abel--Jacobi map $\AJ_{v, p}$ and we would like to know whether 
\begin{equation}\label{eq:local}
\text{im}(\AJ_{v, p}) \subseteq \H_{f}^1(E_v, V_{2n-1}(n)).
\end{equation} 
This is proved by N\'ekov\v{a}r in the case when $v$ is a place for which the Shimura variety has a potentially good reduction \cite[Thm.3.1]{nekovar:ajmaps}. More generally, N\'ekov\v{a}r shows that \eqref{eq:local} holds if the $p$-adic version of the purity conjecture for the monodromy conjecture for $V_{2n-1}$ holds \cite[Conj.3.27]{mokrane}. 

Finally, to the best of the author's knowledge, the injectivity of the Abel--Jacobi map is only conjectural. 

\paragraph{Cohomological trivialization via Chow--K\"unneth decomposition.}
We will cohomologically trivialize the cycles using the so-called \emph{Chow--K\"unneth projectors}. 

Let $Y$ be a smooth projective variety of dimension $n$ over a field $E$ and let 
$m \geq 0$ be an integer. Let $\H^*(\bullet)$ be any Weil cohomology theory. 
We have a cycle map $\cyc \colon \CH^m(Y \times Y) \ra \H^{2m}(\overline{Y} \times \overline{Y})$ where $\overline{Y} = Y \times \Spec \overline{E}$.
 One has the K\"unneth decomposition 
\begin{equation}
\H^{m}(\overline{Y} \times \overline{Y}) = \bigoplus_{i = 0}^m \H^{i}(\overline{Y}) \otimes \H^{m-i}(\overline{Y}). 
\end{equation} 
Let $\Delta \colon Y \hra Y \times Y$ be the diagonal embedding and consider $\Delta_Y = \Delta(Y) \in \CH^n(Y \times Y)$. If $m = n$ then for $i = 0, \dots, 2n$, let 
$\pi_i \colon \H^{2n}(\overline{Y} \times \overline{Y}) \ra \H^{i}(\overline{Y}) \otimes \H^{2n-i}(\overline{Y})$ be the projection map. A Chow--K\"unneth decomposition is a system of 
classes $\{\Delta_{Y, i} \in \CH^n(Y \times Y; \Q) \colon i = 0, \dots, 2n\}$ such that $\pi_i(\cyc(\Delta_Y)) = \cyc(\Delta_{Y, i})$ for every $i = 0, 1, \dots, 2n$. We will often refer 
to the elements $\Delta_{Y, i}$ (if they exist) as the Chow--K\"unneth projectors.

\paragraph{Correspondences.} Let $T \in \CH^{n}(Y \times Y)$. One can view $T$ as a correspondence $T \colon \CH^*(Y) \ra \CH^*(Y)$ via $T(c) := (p_2)_* (p_1^*(c) \cdot T)$ where $p_{1, 2} \colon Y \times Y \ra Y$ are the two projections on the first and the second factors, respectively and $\CH^*(Y \times Y) \otimes \CH^*(Y \times Y) \xra{\cdot} \CH^{*}(Y \times Y)$ is the intersection product. The correspondence associated with the diagonal cycle $\Delta(Y) \in \CH^n(Y \times Y)$ yields the identity map on Weil cohomology. 
In addition, the projector $\Delta_{Y, i}$ yields the projection map 
$$
\H^*(\overline{Y}) \ra \H^i(\overline{Y}).
$$  

\begin{lem}
(i) The diagonal cycle $\Delta_Y \in \CH^n(Y \times Y)$ considered as a correspondence makes the following diagram commutative
 \[
\xymatrix {
\CH^*(Y) \ar[d]^{\cyc} \ar[r]^-{\Delta_{Y}} & \CH^{*}(Y) \ar[d]^{\cyc} \\
\H^{*}(\overline{Y}) \ar[r]^-{=} & \H^{*}(\overline{Y}). \\ 
}
\] 
(ii) The Chow--K\"unneth projector $\Delta_{Y, 2m} \in \CH^n(Y \times Y)$ makes the following diagram commutative 
 \[
\xymatrix {
\CH^*(Y) \ar[d]^{\cyc} \ar[r]^-{\Delta_{Y, 2m}} & \CH^{*}(Y) \ar[d]^{\cyc} \\
\H^{*}(\overline{Y}) \ar[r]^-{\proj_{2m}} & \H^{*}(\overline{Y}). \\ 
}
\]
Here, $\proj_{2m} \colon \H^*(\overline{Y}) \ra \H^{2m}(\overline{Y})$ is the projection. 
\end{lem}

\begin{proof}
Easy.
\end{proof}

\noindent To trivialize $c \in \CH^m(Y)$, we thus consider $c_0 = \Delta_Y(c) - \Delta_{Y, 2m}(c)$ and use the lemma to conclude that 
$$
\cyc(\Delta_Y(c) - \Delta_{Y, 2m}(c)) = \cyc(c) - \proj_{2m}(\cyc(c)) = \cyc(c) - \cyc(c) = 0. 
$$
Thus, to trivialize an element of $\CH^m(Y)$, we apply $\Delta - \Delta_{2m}$ to it (assuming that $\Delta_{2m}$ exists). We thus need to following assumption: 

\assumption[]{The Chow--K\"unneth projector $\Delta_{Y, 2m} \in \CH^n(Y \times Y)$ exists.}

\paragraph{The case of a product $S \times C$.}
Let $E$ be any field of characteristic 0, let $S_{/E}$ be a surface and let $C_{/E}$ be a curve. If $Y = S \times C$ then the above  assumption holds via the existence of the Picard and Albanese projectors (see \cite{murre:motive} and \cite{murre:conjfilt2}) together with the following observation: assuming that $S$ and $C$ have Chow--K\"unneth decomposition then so does 
$Y = S \times C$: for every $i = 0, 1, \dots, 6$. Indeed, define  
\begin{equation*}
\Delta_{Y, i} = \sum_{p+q = i} \Delta_{S, p} \times \Delta_{C, q}.   
\end{equation*}
Here, we have identified $Y \times Y$ with $S \times S \times C \times C$.
For arbitrary $Y$, the existence of Chow--K\"unneth projectors 
is conjectured in \cite[Conj.A]{murre:conjfilt1}. 

\paragraph{Trivializing the cycles $\cZ_K(g)$.} Considering the rational equivalence of the cycle $\cZ_K(g) \in \CH^2(\Sh_K(\G, X)) =: Y$, we get that the Chow--K\"unneth projector $\Delta_{4, Y} \in \CH^3(Y \times Y)$ exists since $Y = S \times C$ where 
$S = \Sh_{K_V}(\G_V, X_V))$ and $C = \Sh_{K_W}(\G_W, X_W))$. The above argument then shows that 
$$
\widetilde{\cZ_K(g)} := (\Delta_Y - \Delta_{4, Y})(\cZ_K(g)) \in \CH^2_0(Y),
$$
i.e., the cycle $\widetilde{\cZ_K(g)}$ is cohomologically trivial. 
}
%
% Galois Action on Special Cycles  
%
\section{Local Galois Action}\label{sec:galois-cycles}
\setcounter{paragraph}{0}

We now prove Theorem~\ref{thm:galois}. 
For readability, we adopt local notation for this and next section. Assume that $n = 3$ (we expect that a similar local conductor formula should hold for any $n$, the latter being a work in progress. Let $\tau$ be an allowable inert finite place of $F$, let $k_0 = F_{\tau}$, let $k = E_{\tau}$ and let $\varpi$ be a uniformizer of $k_0$ (since $k / k_0$ is unramified and quadratic $\varpi$ is a uniformizer of $k_0$ as well). Let $q$ be the size of the residue field of $k_0$. We simplify the notation by letting $G_V =\U(V)(k_0)$, $G_W = \U(W)(k_0)$, $G = \U(V)(k_0) \times \U(W)(k_0)$. The hyperspecial maximal compact subgroups $K_{V, \tau}$ and $K_{W, \tau}$ are denoted by $K_V \subset G_V$ and $K_W \subset G_W$; let $K = K_V \times K_W$. We will also use $V$ and $W$ for the local $k$-Hermitian spaces $V_\tau$ and $W_\tau$, respectively. Let $\delta_V = \diag(\varpi, 1, \varpi^{-1})$ and $\delta_W = \diag(\varpi, \varpi^{-1})$. 

Note that \eqref{eq:orbits} and Lemma~\ref{lem:locgal} allow us to reduce the problem of computing the local Galois action at 
$\tau$ to computing stabilizers (in $H$) of elements of $G / K$. 
The quotient 
$G / K$ is in bijection with the pairs $(L_{V}, L_{W})$ of self-dual Hermitian lattices in 
$V$ and $W$, respectively, and hence, the quotient $H \backslash G / K$ is in bijection with the set of $H$-orbits of hyperspecial points on the product of the Bruhat--Tits buildings for $G_V$ and $G_W$. 

\subsection{Bruhat--Tits buildings for unitary groups}

\paragraph{Buildings, apartments, hyperspecial and special vertices.}
We describe the Bruhat--Tits buildings for unitary groups is via the theory of $p$-adic self-dual norms, an approach initiated by Goldman and Iwahori \cite{goldman-iwahori} and rdeveloped further by Bruhat and Tits \cite{bruhat-tits:4} (see also \cite{cornut:normes1} and \cite[\S 4.1]{koskivirta:thesis}). For $\star \in \{V, W\}$, let 
$\cB(\star)$ be the set of self-dual ultrametric norms in $\star$ in the sense of \cite[p.28]{koskivirta:thesis}. Given a Witt basis $\mathscr B$ for $V$, one defines the apartment $\cA_{\mathscr B}$ corresponding to $\mathscr B$ as the set of all $\alpha \in \cB(V)$ adapted to $\mathscr B$ in the sense of \cite[Defn.47]{koskivirta:thesis}. Assuming that $\langle e_0, e_0\rangle$ is a unit, we parametrize $\cA_{\mathscr B}$ by the real line $\R$ as follows: for any $\lambda \in \R$, define a self-dual norm $\alpha_\lambda$ by
%One way to describe the Bruhat--Tits buildings for unitary groups is via the theory of $p$-adic self-dual norms, an approach initiated by Goldman and Iwahori \cite{goldman-iwahori} and reinterpreted and developed further by Bruhat and Tits \cite{bruhat-tits:4} (see also \cite{cornut:normes1}). This point of view is useful if one wants to approach the problem for general $n$ as oppose to $n=3$. The case of groups of unitary similitudes in three variables is treated in detail in \cite[\S 4.1]{koskivirta:thesis}. More precisely, let $\cB(V)$ (resp., $\cB(W)$) be the set of self-dual ultrametric norms in $V$ (resp., $W$) in the sense of \cite[p.28]{koskivirta:thesis}. Given a Witt basis $B$ for $V$, one defines the apartment $\cA_{B}$ corresponding to $B$ as the set of all $\alpha \in \cB(V)$ adapted to $B$ in the sense of \cite[Defn.47]{koskivirta:thesis}. Assuming (as before) that $\langle e_0, e_0\rangle$ is a unit, we parametrize the apartment by the real line $\R$ as follows: for any $\lambda \in \R$, define a self-dual norm $\alpha_\lambda$ by 
$$
\alpha_{\lambda}(v) = q^{\inf \{\theta \in \R \colon v \in \varpi^{-[\theta+\lambda]} \cO_{k} e_+ \oplus \varpi^{-[\theta]} \cO_{k}e_0 \oplus \varpi^{-[\theta - \lambda]} \cO_{k} e_-\}}, 
$$
%If $\cR(V)$ is the set of all $\cO_{k}$ lattices in $V$ then the norms in a given apartment corresponding to a Witt basis $\{e_+, e_0, e_-\}$ are described (see p.29 of loc. cit.) by the norm functions $f_\lambda \colon \R \ra \cR(V)$ given by 
%$$
%f_\lambda(\theta) = \varpi^{-[\theta+\lambda]} \cO_{k} e_+ \oplus \varpi^{-[\theta]} \cO_{k}e_0 \oplus \varpi^{-[\theta - \lambda]} \cO_{k} e_- \in \cR(V), 
%$$
where $[r]$ is denotes the integer part of $r$.  
Associated to a self-dual norm $\alpha \in \cB(V)$ is the chain of balls $B^*(\alpha) = \{B(\alpha, \theta) \colon \theta \in \R\}$ where $B(\alpha, \theta) = \{v \in V \colon \alpha(v) \leq q^\theta\}$. We say that two norms $\alpha'$ and $\alpha''$ are equivalent (and denote it by $\alpha' \sim \alpha''$) if $B^*(\alpha') = B^*(\alpha'')$. A self-dual norm $\alpha \in \cB(V)$ is a \emph{vertex} of the building $\cB(V)$ if it is the only self-dual norm in its equivalence class $\cl(\alpha)$, i.e., if $\cl(\alpha) = \{\alpha\}$. The other equivalence classes of self-dual norms are called \emph{facets}. Given a facet $X$ with a chain of lattices $B^*(X)$, we say that a self-dual norm $\alpha$ \emph{belongs to} $X$ if $B^*(\alpha) \subset B^*(X)$. Two vertices $\alpha'$ and $\alpha''$ are called \emph{neighbors} if they are vertices of the same facet. Moreover \cite[p.30]{koskivirta:thesis}, for every $\alpha \in \cB(V)$ the chain $B^*(\alpha)$ is a union of homothety classes. A vertex $\alpha$ is \emph{hyperspecial} if $B^*(\alpha)$ is a single homothety class. The other vertices are called \emph{special}. Hyperspecial vertices correspond to $\lambda \in \Z$ whereas special, but not hyperspecial vertices correspond to $\ds \lambda \in \frac{1}{2} + \Z$. Vertices are connected by edges (facets) $X_\lambda = \ds \left ] \lambda, \lambda + \frac{1}{2}\right [$ for $\ds \lambda \in \frac{1}{2} \Z$. 

There is a rather explicit description of the hyperspecial and special vertices of the buildings $\cB(\star)$ in terms of lattices that, in the case of $n = 3$ explains the graphs in Fig.~\ref{fig:buildings}. Recall that an $\cO_{k}$-lattice $L_{\star} \subset \star$ is \emph{self-dual} if $L_{\star}^\vee = L_{\star}$. Each self dual lattice in $\star$ yields a maximal compact subgroup of $G_{\star}$ and hence, a hyperspecial point $x_{\star}$ of $\cB(\star)$. Let $\Hyp_{\star}$ be the set of hyperspecial vertices of $\cB(\star)$. To understand the incidences, consider lattices that are not self-dual, but \emph{almost self-dual}. A lattice $L$ in $\star$ is \emph{almost self-dual} if 
$
\varpi L^\vee \subsetneq L \subsetneq L^\vee.  
$
Almost self-dual lattices correspond to special, but not hyperspecial points (the white points in Fig.~\ref{fig:buildings}). 
Let $\Sp_{\star}$ be the set of special, but not hyperspecial vertices in $\star$. To explain the incidences, recall that a choice of a Witt basis $\langle e_+, e_0, e_- \rangle$ of $V$ (resp., $\langle e_+, e_-\rangle$ of $W$) fixes apartments $\cA_{V}$ (resp., $\cA_{W}$) of $\cB(V)$ (resp., $\cB(W)$). The intersection $\Hyp_{V} \cap \cA_{V}$ consists of all lattices of the form $\langle \varpi^n e_+, e_0, \varpi^{-n}e_-\rangle$ for $n \in \Z$. Similarly, $\Hyp_{W} \cap \cA_{W}$ consists of all lattices of the form $\langle \varpi^n e_+, \varpi^{-n}e_-\rangle$ for $n \in \Z$. The intersection $\Sp_{V} \cap \cA_{V}$ consists of all lattices of the form $\langle \varpi^{n+1} e_+, e_0, \varpi^{-n}e_-\rangle$ for $n \in \Z$. Finally, $\Sp_{W} \cap \cA_{W}$ consists of all lattices of the form $\langle \varpi^{n+1} e_+, \varpi^{-n}e_-\rangle$ for $n \in \Z$. If 
$$
L = \langle \varpi^a e_+, e_0, \varpi^b e_- \rangle, L' = \langle \varpi^{a'} e_+, e_0, \varpi^{b'} e_- \rangle \in \cA_{V} \cap (\Hyp_{V} \cup \Sp_{V})
$$
are two lattices then their distance is defined as 
$$
\dist(L, L') = \frac{1}{2} \left ( |a - a'| + |b - b'| \right )
$$ 
(and similarly for $\cB(W)$). We connect two vertices in $\Hyp_{\star} \cup \Sp_{\star}$ by an edge if $\dist(L, L') = 1/2$. If we color the hyperspecial vertices with black and the special ones with white then the vertices of each edge have different colors. The resulting graph $\cB(\star)$ is a tree (the dimension is equal to the rank of the maximal split torus) and we can count the number of neighbors of each black and white vertex by counting the number of isotropic lines in Hermitian spaces over finite fields. For $\cB(V)$, each hyperspecial vertex has $q^3 +1$ white neighbors and each white neighbor has $q+1$ black neighbors.  For 
$\cB(W)$ each black vertex has $q+1$ white neighbors and each white neighbor has $q+1$ black ones. One can draw the tree $\cB(V)$ and the subtree $\cB(W)$ as  shown in Fig.~\ref{fig:buildings}.  

%Both buildings $\cB(V)$ and $\cB(W)$ are trees (their dimensions are equal to the ranks of the maximal split tori in $\U(V)$ and $\U(W)$, respectively). The vertices of $\cB(V)$ (resp., $\cB(W)$) are $\Hyp_{V}$ (resp., $\Hyp_{W}$). 

\begin{figure}
\begin{center}
\begin{picture}(300,200)

\put(0, 0){\line(250, 0){250}}
\put(0, 0){\line(1,1){100}}
\put(250, 0){\line(1,1){100}}
\put(100,100){\line(1,0){250}}

\put(175,50){\circle*{6}}
\put(135,50){\circle{6}}
\put(215,50){\circle{6}}
\put(195,70){\circle{6}}
\put(155,30){\circle{6}}

\put(175,50){\line(1,0){37}}
\put(175,50){\line(-1,0){37}}
\put(175,50){\line(1,1){17}}
\put(175,50){\line(-1,-1){17}}

%\put(135,50){\circle{6}}
\put(120, 50){\circle*{6}}
\put(125, 40){\circle*{6}}
\put(145, 60){\circle*{6}}
\put(120,50){\line(1,0){12}}
\put(123,38){\line(1,1){10}}
\put(147,62){\line(-1,-1){10}}

%\put(215,50){\circle{6}}
\put(205,40){\circle*{6}}
\put(225,60){\circle*{6}}
\put(230,50){\circle*{6}}
\put(230,50){\line(-1,0){12}}
\put(217,52){\line(1,1){10}}
\put(213,48){\line(-1,-1){10}}

%\put(195,70){\circle{6}}
\put(210,70){\circle*{6}}
\put(180,70){\circle*{6}}
\put(205,80){\circle*{6}}
\put(198, 70){\line(1,0){15}}
\put(192, 70){\line(-1,0){15}}
\put(197, 72){\line(1,1){10}}

%\put(155,30){\circle{6}}
\put(140,30){\circle*{6}}
\put(170,30){\circle*{6}}
\put(145,20){\circle*{6}}
\put(158, 30){\line(1,0){15}}
\put(152, 30){\line(-1,0){15}}
\put(153, 28){\line(-1,-1){10}}

\put(25, 10){$\cB(W)$}

%\put(175,50){\circle*{6}}
\put(155,114){\circle{6}}
\put(165,118){\circle{6}}
\put(175,120){\circle{6}}
\put(185,118){\circle{6}}
\put(195,114){\circle{6}}
\put(175,50){\line(0,1){67}}
\put(175,49){\line(1,6){11}}
\put(175,49){\line(-1,6){11}}
\put(175,50){\line(1,3){20}}
\put(175,50){\line(-1,3){20}}

%\put(230,50){\circle*{6}}
\put(210,114){\circle{6}}
\put(220,118){\circle{6}}
\put(230,120){\circle{6}}
\put(240,118){\circle{6}}
\put(250,114){\circle{6}}
\put(230,50){\line(0,1){67}}
\put(230,49){\line(1,6){11}}
\put(230,49){\line(-1,6){11}}
\put(230,50){\line(1,3){20}}
\put(230,50){\line(-1,3){20}}

%\put(120, 50){\circle*{6}}
\put(100,114){\circle{6}}
\put(110,118){\circle{6}}
\put(120,120){\circle{6}}
\put(130,118){\circle{6}}
\put(140,114){\circle{6}}
\put(120,50){\line(0,1){67}}
\put(120,49){\line(1,6){11}}
\put(120,49){\line(-1,6){11}}
\put(120,50){\line(1,3){20}}
\put(120,50){\line(-1,3){20}}

% \put(120,120){\circle{6}}
\put(105,188){\circle*{6}}
\put(120,190){\circle*{6}}
\put(135,188){\circle*{6}}
\put(120,123){\line(0,1){67}}
\put(121,123){\line(1,5){13}}
\put(119,123){\line(-1,5){13}}

% \put(175,120){\circle{6}}
\put(160,188){\circle*{6}}
\put(175,190){\circle*{6}}
\put(190,188){\circle*{6}}
\put(175,123){\line(0,1){67}}
\put(176,123){\line(1,5){13}}
\put(174,123){\line(-1,5){13}}

% \put(230,120){\circle{6}}
\put(215,188){\circle*{6}}
\put(230,190){\circle*{6}}
\put(245,188){\circle*{6}}
\put(230,123){\line(0,1){67}}
\put(231,123){\line(1,5){13}}
\put(229,123){\line(-1,5){13}}

\put(25, 110){$\cB(V)$}
\end{picture}
\end{center}
\caption{The Bruhat--Tits building $\cB(W)$ viewed as a sub-building of $\cB(V)$.}
\label{fig:buildings}
\end{figure}

\subsection{Local conductor and the distance function - proof of Theorem~\ref{thm:galois}}\label{subsec:loccond}
To prove Theorem~\ref{thm:galois}, we compute the local conductor in terms of the distance function on $\cB(V)$. 
%It is through such a combinatorial formula that we link the local Galois action with the action of the local Hecke algebra in Section~\ref{sec:distrel}. 

\paragraph{Local conductors and stabilizers.}\label{par:localcondstab}
%Given a special cycle $\xi \in \cZ_K(\G, \Hbf)$, the field of definition $E(\xi)$ is an abelian extension of $E$. The reciprocity law described in Section~\ref{par:concomp} allows us to compute the completion $E(\xi)_\tau$ at the fixed allowable inert place $\tau$ of $E$ as follows: 
%let $(x)_{\tau \text{-finite}}$ be the element of the right-hand side of 
%\eqref{eq:orbits} corresponding to the Galois orbit $\Gal(E^{\ab} / E) \xi$. 
%Here, $x \in H_{\tau} \backslash G / K$ is the trivial coset for all but finitely many finite places $\tau$ of $F$. 
%If $\tau$ is a finite place of $F$ such that  
%Since both $K_{V}$ and $K_{W}$ are hyperspecial maximal compact subgroups, $G / K$ is in bijection with the pairs $(L_{V}, L_{W})$ of self-dual hermitian lattices in 
%$V$ and $W$, respectively. 
Let $\cL$ be the set of pairs $(L_{V}, L_{W})$ of self-dual hermitian lattices in 
$V$ and $W$, respectively. Given a pair $(L_V, L_W) \in \cL$, let $[L_{V}, L_{W}] := H \cdot (L_V, L_W)$ denote its $H$-orbit. 
%We compute the image of $\Stab_{H}(L_{V}, L_{W})$ under the determinant map $\det \colon H \ra \U(1)(k_0)$. 
For $n \geq 0$, if $\varpi^n$ is $\cO_{n} = \cO_{k_0} + \varpi^n \cO_{k}$ is the local order of conductor then there is a filtration 
\begin{equation}\label{eq:filt1}
\cO_{0}^\times \supset \cO_1^\times \supset \dots \supset \cO_{n}^\times \supset \dots
\end{equation}
whose image under the map $r \colon k^\times \ra \U(1)(k_0)$ given 
by $r(s) = \overline{s} / s$ for $s \in k^\times$ yields a filtration: 
\begin{equation}\label{eq:filt2}
\cO_0^1 := r (\cO_0) \supset \cO_1^1 := r(\cO_1) \supset \dots \supset \cO^1_n := r(\cO_n) \supset \dots.
\end{equation}

\begin{lem}\label{lem:filt}
If $s \in \cO_0^1$ satisfies $s \equiv 1 \bmod \varpi^c$ for some $c > 0$ then $r^{-1}(s) \subset \varpi^{\Z} \cO_c^\times$.  
\end{lem}

\begin{proof}
Let $\eta \in \cO_{k}$ be such that $\cO_{k} = \cO_{k_0}[\eta]$ and $\eta^2 \in \cO_{k_0}^\times$ (i.e., $\overline{\eta} = - \eta$). That $\eta \in \cO_{k_0}^\times$ is a consequence of the fact that $k / k_0$ is unramified. Let $s = \overline{\lambda} / \lambda$ for $\lambda = x + \eta y$ with $x, y \in \cO_{k_0}$. We can assume that $v(\lambda) = 0$ (otherwise, we consider $\varpi^{-v(\lambda)} \lambda$ whose image under $r$ is still $s$). Since, $x - \eta y = (x + \eta y)(1 + \varpi^c z)$ where $s = 1+ \varpi^c z$, we get $2 \eta y + \varpi^c (x + \eta y) = 0$. Since $\tau$ has odd residue characteristic, it follows that $v(y) \geq c$. Hence, $\lambda = x + \eta y \in \cO_{c}^\times$.  
\end{proof}

\noindent Calculating the local conductor amounts to detecting the position of 
$$
\ds \det(\Stab_{H}(L_{V}, L_{W})) \subset \U(1)(k_0)
$$ 
with respect to the filtration \eqref{eq:filt2}. We will show that 
$\det(\Stab_{H}(L_{V}, L_{W})) = \cO^1_c$ for a unique $c$ that is calculated purely in terms of the distance function on the building $\cB(V)$. 

\paragraph{Lines in $\cB(V)$ are apartments.} The following lemma is a basic property of buildings over complete 
local fields proved in great generality in \cite[Cor.2.8.4]{bruhat-tits:1}:  

\begin{lem}\label{lem:maxflats}
All geodesic lines of $\cB(\star)$ are precisely the apartments of $\cB(\star)$.  
\end{lem}

\noindent The statement is known as an \emph{extension of geodesics} property (also discussed in \cite{parreau:immeuble} and \cite{kleiner-leeb}).

\paragraph{Two relevant apartments in $\cB(V)$.}
\noindent There is a unique self-dual lattice $L_{D} = \cO_{k} e_0 \subset D$. Let  
$$
\dist(L_{V}, L_{W}) = n \qquad \text{and} \qquad \dist(L_{V}, \pr_{W}(L_{V})) =d. 
$$
Since $\cB(V)$ is a tree, $\dist(\pr_{W}(L_{V}), L_{W}) = n-d$. Here, $d$ is the distance from $L_{V}$ to the sub-building $\cB(W)$ and $n$ is the distance between the two hyperspecial vertices corresponding to $L_{V}$ and $L_{W} \oplus L_{D}$. Consider two apartments $\cA$ and $\widetilde{\cA}$ that will be used in the computation: 

\begin{itemize}
\item $\cA$: an apartment containing the two hyperspecial vertices $\pr_{W}(L_{V}) \oplus L_{D}$ and $L_{W} \oplus L_{D}$ and contained entirely in $\cB(W)$ is shown on Fig.~\ref{fig:apts}. In addition, we choose a Witt basis $\mathscr B = \{e_+, e_0, e_-\}$ for $\cA$ such that 
\begin{equation}\label{eq:A}
\pr_{W}(L_{V}) \oplus L_{D} = \langle e_+, e_0, e_- \rangle \qquad \text{and} \qquad 
L_{W} \oplus L_{D} = \langle \varpi^{-(n-d)} e_+ , e_0, \varpi^{n-d} e_- \rangle. 
\end{equation}

\item $\widetilde{\cA}$: an apartment containing the three hyperspecial vertices of $\cB(V)$ corresponding to the self-dual lattices $\{L_{V}, \pr_{W}(L_{V}), L_{W}\}$ and intersecting $\cA$ in a half-line contained in $\cB(W)$ whose end point is $\pr_{W}(L_{V})$. Such an apartment exists: choose a geodesic line $\ell$ of $\cB(V)$ containing the three points $\{L_{V}, \pr_{W}(L_{V}), L_{W}\}$ such that $\ell \cap \cB(W)$ is the half-line starting at $\pr_W(L_V)$ and use Lemma~\ref{lem:maxflats} to deduce that such a line corresponds to an apartment $\widetilde{\cA}$ of $\cB(V)$. 
In our case, $\widetilde{\cA}$ can be visualized as in Fig.~\ref{fig:apts}. 
\end{itemize} 
%Note that the intersection $\cA \cap \widetilde{\cA} = \cB(W) \cap \widetilde{\cA}$ is a half-line contained in $\cB(W)$. 
Note that the common half-apartment $\cA \cap \widetilde{\cA}$ is determined by the isotropic line $ke_+$. 

\begin{figure}
\begin{center}
\begin{picture}(300,200)

\put(0, 0){\line(250, 0){250}}
\put(0, 0){\line(1,1){100}}
\put(250, 0){\line(1,1){100}}
\put(100,100){\line(1,0){250}}
\put(175,50){\circle*{6}}
\put(90, 35){$\cA$}
\put(153,75){$d$}
\put(145, 35){$\pr_{W}(L_{V})$}
\put(200,55){$n-d$}
\put(250,50){\circle*{6}}
\put(235, 35){$L_{W}$}
\put(155,112){\circle*{6}}
\put(163,110){$L_{V}$}
\put(125, 145){$\widetilde{\cA}$}

\put(175,50){\line(1,0){100}}
\multiput(175,50)(-20,0){5}{\line(-1,0){10}}
\put(25, 10){$\cB(W)$}
\put(175,50){\line(-1,3){40}}

\put(25, 110){$\cB(V)$}
\end{picture}
\end{center}
\caption{The choice of the two apartments $\cA$ and $\widetilde{\cA}$.}
\label{fig:apts}
\end{figure}

\paragraph{A Witt basis $\widetilde{\mathscr B}$ for $\widetilde{\cA}$.}
Call a Witt basis $\widetilde{\mathscr B} = \{\widetilde{e}_+, \widetilde{e}_0, \widetilde{e}_-\}$ for $\widetilde{\cA}$ \emph{suitable} if the following conditions are satisfied 
\begin{enumerate}
\item $\langle \widetilde{e}_0, \widetilde{e}_0 \rangle = 1$, 
\item 
$
\pr_{W}(L_{V}) \oplus L_{D} = \langle \widetilde{e}_+, \widetilde{e}_0, \widetilde{e}_- \rangle \qquad \text{and} \qquad 
L_{W} \oplus L_{D} = \langle \varpi^d \widetilde{e}_+, \widetilde{e}_0, \varpi^{-d}\widetilde{e}_- \rangle.
$
\end{enumerate}
\noindent Given a suitable Witt basis $\widetilde{\mathscr B}$, let $S_{\widetilde{\mathscr B}} \colon V \ra V$ be the linear transformation for which $S_{\widetilde{\mathscr B}}(e_{\pm}) = \widetilde{e}_{\pm}$ and 
$S_{\widetilde{\mathscr B}}(e_0) = \widetilde{e}_0$. Since $\langle \widetilde{e}_0, \widetilde{e}_0 \rangle  = \langle e_0, e_0\rangle  = 1$ and both $\mathscr B$ and $\widetilde{\mathscr B}$ are Witt bases, $S_{\widetilde{\mathscr B}} \in G_V$. 
By abuse of notation, let $S_{\widetilde{\mathscr B}}$ be the matrix representing the unitary isometry $S_{\widetilde{\mathscr B}}$ with respect to the Witt basis $\mathscr B$ (taken in the order $\{e_+, e_0, e_-\}$).
  
\begin{lem}\label{lem:Scong}
There exists a suitable Witt basis $\widetilde{\mathscr B}$ for $\widetilde{\cA}$ such that the matrix $S_{\widetilde{\mathscr B}}$ is of the form 
$$
S_{\widetilde{\mathscr B}} = {\mthree {1} {\beta} {\gamma} {0} {1} {-\overline{\beta}} {0} {0} {1}}, \qquad \beta, \gamma \in \cO_{k}^\times, \ \beta \overline{\beta} + \gamma + \overline{\gamma} = 0.  
$$
\end{lem}

\begin{proof}
Choose any suitable Witt basis $\mathscr B'$ for $\widetilde{A}$ and observe that since $S_{\mathscr B'}$ is the change-of-basis matrix for two bases of the same lattice $\pr_{W}(L_{V}) \oplus L_{D}$, then 
$S_{\mathscr B'} \in \GL_3(\cO_{k})$. Using that  
$$
\langle \varpi^{-m} \widetilde{e}_+, \widetilde{e}_0, \varpi^{m} \widetilde{e}_- \rangle = 
\langle \varpi^{-m} e_+, e_0, \varpi^m e_- \rangle, \qquad \text{for each } m\geq 0, 
$$
but 
$\langle \varpi  \widetilde{e}_+, \widetilde{e}_0, \widetilde{e}_- \rangle \ne \langle \varpi e_+, e_0, e_- \rangle$ (the latter is a consequence of the fact that the apartments $\cA$ and $\widetilde{\cA}$ diverge at the almost self-dual lattices $L = \langle \varpi e_+, e_0, e_- \rangle$ and $\widetilde{L} = \langle \varpi  \widetilde{e}_+, \widetilde{e}_0, \widetilde{e}_- \rangle$), 
we get $\delta_V^m S_{\mathscr B'} \delta_V^{-m} \in \GL_3(\cO_{k})$ for all $m \geq 0$ 
(recall that $\delta_V = \diag(\varpi, 1, \varpi^{-1})$), but 
$$
\diag(\varpi^{-1}, 1, 1) S_{\mathscr B'} \diag(\varpi, 1, 1) \notin \GL_3(\cO_{k})
$$ 
Here, we are using that $\delta_V^m S_{\mathscr B'} \delta_V^{-m}$ is change-of-basis matrix from the basis 
$\{\varpi^{-m} e_+, e_0, \varpi^m e_-\}$ to the basis $\{ \varpi^{-m} \widetilde{e}_+, \widetilde{e}_0, \varpi^{m} \widetilde{e}_- \}$). The first condition implies that $S_{\mathscr B'}$ is upper-triangular. 
In particular, $\widetilde{e}_+ = u_+ e_+$ and $\widetilde{e}_0 = u_0 e_0 + v_+ e_+$ for some 
$u_+, u_0 \in \cO_k^\times$ with $\overline{u_0} u_0 = 1$. Replacing the Witt basis 
$\mathscr B' = \{\widetilde{e}_+, \widetilde{e}_0, \widetilde{e}_-\}$ with the (suitable) Witt basis 
$\widetilde{\mathscr B} = \{u_+^{-1} \widetilde{e}_+, u_0^{-1} \widetilde{e}_0, \overline{u}_+ \widetilde{e}_-\}$, 
we obtain that  
$$
S_{\widetilde{\mathscr B}} = {\mthree {1} {\beta} {\gamma} {0} {1} {-\overline{\beta}} {0} {0} {1}}, \qquad \beta, \gamma \in \cO_{k},\ \ \ \beta \overline{\beta} + \gamma + \overline{\gamma} = 0.  
$$  
The fact that $\langle \varpi  \widetilde{e}_+, \widetilde{e}_0, \widetilde{e}_- \rangle \ne \langle \varpi e_+, e_0, e_- \rangle$ yields 
\begin{equation}\label{eq:notingl3}
{\mthree {1} {\varpi^{-1} \beta} {\varpi^{-1}\gamma} {0} {1} {-\overline{\beta}} {0} {0} {1}} \notin \GL_3(\cO_{k}),
\end{equation}
which implies that $\gamma \in \cO_{k}^\times$. Indeed, if $v(\gamma) > 0$ then $\beta \overline{\beta} + \gamma + \overline{\gamma} = 0$ implies that $v(\beta) > 0$ which is a contradiction with \eqref{eq:notingl3}. It remains to show that 
$\beta \in \cO_{k}^\times$. To prove this, we use the fact that the self-dual lattice $\widetilde{L}_{-1} = \langle \varpi  \widetilde{e}_+, \widetilde{e}_0, \varpi^{-1} \widetilde{e}_- \rangle \notin \cB(W)$ (since the intersection $\cB(W) \cap \widetilde{\cA}$ is a half-line whose end-point is $\langle \widetilde{e}_+, \widetilde{e}_0, \widetilde{e}_- \rangle = \langle e_+, e_0 ,e_-\rangle$). Note that if 
$\varpi^{-1} \beta \in \cO_{k}$ then $\widetilde{L}_{-1} \cap k e_0 = \cO_{k} e_0$ which is equivalent to $\widetilde{L}_{-1} \in \cB(W)$ (we have used that $\widetilde{L}_{-1} \cap k e_0 \subseteq (\widetilde{L}_{-1} \cap k e_0)^\vee$ since $\widetilde{L}_{-1}$ is self-dual). Hence, $\beta \in \cO_{k}^\times$ which proves the lemma. 
\end{proof}

\paragraph{Computing $\Stab_{H}(L_{V}, L_{W})$.}
%To compute $\Stab_{H}(L_{V}, L_{W})$, we compute $\Stab_{H}(L_{W}, \pr_{W}(L_{V}))$ with respect to the basis $B$ and then intersect it with $\Stab_{G_V}(L_{V})$ (computed with respect to the basis $\widetilde{B}$ and converted via the change of basis matrix $S$). 
%To complete the calculation, it suffices to take the image of the stabilizer under the determinant map.   
%\paragraph{Computing $\Stab_{H}(L_{W}, \pr_{W}(L_{V}))$.}
Since $\dist (L_{W}, \pr_{W} (L_{V})) = n-d$, 
$\Stab_{H}(L_{W}, \pr_{W} (L_{V, {\tau}}))$ computed 
with respect to the basis $\mathscr B$ and viewed as a subgroup of $G_V$ is 
\begin{equation}\label{eq:stab1}
\Stab_{H}(\pr_{W} (L_{V}), L_{W}) =  G_V \cap  
{\mthree {\cO_{k}} {} {\cO_{k}} {} {1} {} {\varpi^{2(n-d)} \cO_{k}} {} {\cO_{k}}} \subset G_V. 
\end{equation} 
Here, $G_V$ is viewed as a subgroup of $\GL_3(\cO_{k})$ with respect to the Witt basis $\mathscr B$ and we have used the fact that the stabilizer belongs to $\delta_V^{-(n-d)} \GL_3(\cO_{k}) \delta_V^{n-d} \cap \GL_3(\cO_{k})$. 
%Moreover, we have also assumed (without loss of generality) that $\{ \widetilde{e}_+, \widetilde{e}_- \}$ is such that 
%\begin{equation}
%L_{W} = \cO_{k} \widetilde{e}_+ \oplus \cO_{k} \widetilde{e}_- \qquad \text{and} \qquad 
%\pr_{W} ( L_{V}) = \varpi^{-n+d} \cO_{k} \widetilde{e}_+ \oplus \varpi^{n-d} \cO_{k} \widetilde{e}_-.  
%\end{equation}
%Note that we have used the fact (this is quite obvious from the drawing of the two trees)
%$$
%n = \dist(L', L'') = \dist (L', \pr_{W} (L')) + 
%\dist (\pr_{W} (L'), L''). 
%$$
%\begin{lem}\label{lem:geodes}
%If $\alpha_V \in \cB(V)$ and $\alpha_W \in \cB(W)$. Then the geodesic from $\alpha_V$ to $\ex_V(\alpha_W)$ 
%passes through $\pr_W(\alpha_V)$, i.e., 
%$$
%\dist(\alpha_V, \ex_V(\alpha_W)) = \dist(\alpha_V, \ex_V(\pr_W(\alpha_V))) + \dist(\pr_W(\alpha_V), \alpha_W). 
%$$
%\end{lem}
%\begin{proof}
%\end{proof}
%\noindent To find the stabilizer of $(L_{V}, L_{W})$, we need to intersect the stabilizer from \eqref{eq:stab1} with $\Stab_{G}(L_{V})$. 
%\paragraph{Computing $\Stab_{G_V}(L_{V}, L_{W})$.} 
With respect to the basis $\widetilde{\mathscr B}$, $L_{V} = \langle \varpi^d \widetilde{e}_+, \widetilde{e}_0, \varpi^{-d}\widetilde{e}_- \rangle$ and 
$L_{W} = \langle \varpi^{-(n-d)} \widetilde{e}_+, \widetilde{e}_0, \varpi^{n-d}\widetilde{e}_- \rangle$, i.e., 
\begin{equation}\label{eq:stab2}
\Stab_{G_V}(L_{V}, L_{W}) = G_V \cap {\mthree {\cO_{k}} {\varpi^d \cO_{k}} {\varpi^{2d} \cO_{k}} {\varpi^{n-d} \cO_{k}} {\cO_{k} } {\varpi^{d}\cO_{k}} {\varpi^{2(n-d)} \cO_{k}} {\varpi^{n-d} \cO_{k}} {\cO_{k}}}. 
\end{equation}
%Here, we have used that the stabilizer lies in 
%$\GL_3(\cO_{k}) \cap \delta_V^d \GL_3(\cO_{k}) \delta_V^{-d} \cap \delta_V^{-(n-d)} \GL_3(\cO_{k}) \delta_V^{n-d}$. 
Hence, 
\begin{equation}\label{eq:stabLVLW}
\Stab_{H_{\tau}}(L_{V}, L_{W}) = G_V \cap S_{\widetilde{\mathscr B}}^{-1}{\mthree {\cO_{k}} {\varpi^d \cO_{k}} {\varpi^{2d} \cO_{k}} {\varpi^{n-d} \cO_{k}} {\cO_{k} } {\varpi^{d}\cO_{k}} {\varpi^{2(n-d)} \cO_{k}} {\varpi^{n-d} \cO_{k}} {\cO_{k}}}S_{\widetilde{\mathscr B}} \cap   
{\mthree {\cO_{k}} {} {\cO_{k}} {} {1} {} {\varpi^{2(n-d)} \cO_{k}} {} {\cO_{k}}}. 
\end{equation}

\paragraph{Computing $\det (\Stab_{H}(L_{V}, L_{W}))$.}
%We are thus left with computing the image under the determinant map of the intersection 
%$$
%\Stab_{H_{\tau}}(L_{V}, L_{W}) = G_V \cap S^{-1}{\mthree {\cO_{k}} {\varpi^d \cO_{k}} {\varpi^{2d} \cO_{k}} {\varpi^{n-d} \cO_{k}} {\cO_{k} } {\varpi^{d}\cO_{k}} {\varpi^{2(n-d)} \cO_{k}} {\varpi^{n-d} \cO_{k}} {\cO_{k}}}S \cap   
%{\mthree {\cO_{k}} {} {\cO_{k}} {} {1} {} {\varpi^{2(n-d)} \cO_{k}} {} {\cO_{k}}}. 
%$$
%Here, we have changed the basis back to $\cB$. 
Let $A = {\mthree {x_{11}} {} {x_{13}} {} {1} {} {x_{31}} {} {x_{33}}}$ in the above intersection (here, $v(x_{11}), v(x_{13}), v(x_{33}) \geq 0$ and $v(x_{31}) \geq 2(n-d)$. The intersection condition then means that there exists a matrix $B = {\mthree {y_{11}} {y_{12}} {y_{13}} {y_{21}} {y_{22}} {y_{23}} {y_{31}} {y_{32}} {y_{33}}}$ with $v(y_{12}), v(y_{23}) \geq n-d$, $v(y_{13}) \geq 2d$, $v(y_{21}), v(y_{32}) \geq n-d$, $v(y_{31}) \geq 2(n-d)$ such that $SA = BS$, i.e.,  
\begin{equation}\label{eq:stab}
{\mthree {1} {\beta} {\gamma} {0} {1} {-\overline{\beta}} {0} {0} {1}} \cdot {\mthree {x_{11}} {} {x_{13}} {} {1} {} {x_{31}} {} {x_{33}}} =  {\mthree {y_{11}} {y_{12}} {y_{13}} {y_{21}} {y_{22}} {y_{23}} {y_{31}} {y_{32}} {y_{33}}} \cdot 
{\mthree {1} {\beta} {\gamma} {0} {1} {-\overline{\beta}} {0} {0} {1}}.  
\end{equation}

\begin{lem}
If $c = \min(d, 2(n-d))$ then 
$
\det ( \Stab_{H}(L_{V}, L_{W})) =  \cO^1_c. 
$
\end{lem}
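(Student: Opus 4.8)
The plan is to make the intersection defining $\Stab_{H_\tau}(L_{V,\tau},L_{W,\tau})$ completely explicit, and then to prove the two inclusions $\det(\Stab_{H_\tau}(L_{V,\tau},L_{W,\tau}))\subseteq U^1(c)$ and $U^1(c)\subseteq\det(\Stab_{H_\tau}(L_{V,\tau},L_{W,\tau}))$ separately. Throughout I identify an element of $H_\tau$ with its $2\times 2$ block $M=\smallmtwo{x_{11}}{x_{13}}{x_{31}}{x_{33}}$ acting on $W_\tau=\langle e_+,e_-\rangle$ (the action on $D_\tau$ being the identity), so that $M$ is unitary for the split Hermitian form on $W_\tau$ and $\det M\in\U^1(F_\tau)$ (a unitary matrix has norm-one determinant). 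I also use the standard description, valid since $E_\tau/F_\tau$ is unramified, that $U^1(m)=\{u\in\U^1(F_\tau):v(u-1)\ge m\}$ for $m\ge 1$ and $U^1(0)=\U^1(F_\tau)$; this follows from $U^1(m)=r_\tau(\cO_{m,\tau}^\times)=r_\tau(1+\varpi^m\cO_{E_\tau})$ and a one-line computation, using that $r_\tau$ is surjective onto $\U^1(F_\tau)$ (Hilbert 90) with kernel $F_\tau^\times$.

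First I would carry out the bookkeeping. Expanding \eqref{eq:stab} entrywise, solving for the entries $y_{ij}$ of $B$, imposing the divisibility conditions on $B$ coming from \eqref{eq:stab2}, and simplifying everywhere with the relation $\beta\overline\beta+\gamma+\overline\gamma=0$ of Lemma~\ref{lem:Scong}, one finds that such an $M$ lies in $\Stab_{H_\tau}(L_{V,\tau},L_{W,\tau})$ if and only if $x_{11},x_{13},x_{33}\in\cO_{E_\tau}$ and: (a) $v(x_{31})\ge 2(n-d)$; (b) $x_{11}+\gamma x_{31}\equiv 1$ and $x_{33}+\overline\gamma x_{31}\equiv 1\pmod{\varpi^d}$; (c) $v\bigl(x_{13}+\gamma(x_{33}-x_{11})+\gamma\overline\gamma\,x_{31}+\beta\overline\beta(1-x_{11})\bigr)\ge 2d$. (The remaining entries of $B$ turn out to be automatically integral, using $v(x_{31})\ge 2(n-d)\ge 0$.) The inclusion $\subseteq$ is then immediate: if $M$ is in the stabilizer then $v(x_{31})\ge 2(n-d)\ge c$ by (a), and since $c\le d$, condition (b) forces $x_{11}\equiv x_{33}\equiv 1\pmod{\varpi^c}$, while $v(x_{13}x_{31})\ge v(x_{31})\ge c$; hence $\det M=x_{11}x_{33}-x_{13}x_{31}\equiv 1\pmod{\varpi^c}$, i.e. $\det M\in U^1(c)$.

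For the inclusion $\supseteq$ I would exhibit two families inside the stabilizer. The first uses $x_{31}=0$: a unitary upper-triangular $M$ satisfies $x_{33}=\overline{x_{11}}^{-1}$ and $\Tr(\overline{x_{13}}x_{33})=0$, and if one imposes $x_{11}\in 1+\varpi^d\cO_{E_\tau}$ then (a) and (b) hold automatically and (c) becomes a congruence $x_{13}\equiv w_0\pmod{\varpi^{2d}}$ with an explicit $w_0\in\varpi^d\cO_{E_\tau}$ that is anti-Hermitian modulo $\varpi^{2d}$; non-degeneracy of the pairing $(y,z)\mapsto\Tr(\overline y z)$ on $E_\tau$ then lets one correct $w_0$ by an element of $\varpi^{2d}\cO_{E_\tau}$ so that also $\Tr(\overline{x_{13}}x_{33})=0$. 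These $M$ lie in $\Stab_{H_\tau}(L_{V,\tau},L_{W,\tau})$ and have $\det M=x_{11}/\overline{x_{11}}$, which exhausts $U^1(d)$ as $x_{11}$ ranges over $1+\varpi^d\cO_{E_\tau}$. The second family is, for $x\in E_\tau$ with $v(x)\ge 2(n-d)$ and $x+\overline x+\beta\overline\beta\,x\overline x=0$,
$$M_x=\begin{pmatrix}1-\gamma x&\gamma\overline\gamma\, x\\ x&1-\overline\gamma x\end{pmatrix}.$$
Using $\gamma+\overline\gamma+\beta\overline\beta=0$ one checks directly that $M_x$ is unitary for the split form precisely when $x+\overline x+\beta\overline\beta\,x\overline x=0$, that $M_x$ satisfies (a), (b), (c) (the latter two because the relevant expressions are identically $1$ and $0$), and that $\det M_x=1-(\gamma+\overline\gamma)x=1+\beta\overline\beta\,x$. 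The constraint on $x$ is exactly $(1+\beta\overline\beta x)\overline{(1+\beta\overline\beta x)}=1$, so $x\mapsto 1+\beta\overline\beta x$ carries $\{x:v(x)\ge 2(n-d),\ x+\overline x+\beta\overline\beta x\overline x=0\}$ bijectively onto $\{u\in\U^1(F_\tau):v(u-1)\ge 2(n-d)\}=U^1(2(n-d))$ (as $\beta\overline\beta\in\cO_{F_\tau}^\times$). Since $c$ is either $d$ or $2(n-d)$, one of the two families has determinant set equal to $U^1(c)$, so $U^1(c)\subseteq\det(\Stab_{H_\tau}(L_{V,\tau},L_{W,\tau}))$; together with the previous paragraph this proves the lemma.

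The main obstacle is not analytic but combinatorial: getting condition set (a)–(c) correct requires the careful elimination of $B$ from \eqref{eq:stab} together with systematic use of $\beta\overline\beta+\gamma+\overline\gamma=0$, and the non-obvious ingredient in the $\supseteq$ direction is the second family $M_x$. Indeed the diagonal torus of $H_\tau$, written in the Witt basis $\cB$ of the apartment $\cA$, produces determinants lying only in a subgroup of $U^1(d)$, which is strictly too small when $d>2(n-d)$; one is forced to use the "off-diagonal" matrices $M_x$, and it is precisely the identity $\gamma+\overline\gamma+\beta\overline\beta=0$ that makes conditions (b) and (c) hold on the nose for them (so no Hensel-type lifting is needed).
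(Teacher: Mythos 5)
Your proof is correct and follows essentially the same route as the paper's: your conditions (a)--(c) are exactly what the paper extracts by eliminating $B$ from $SA=BS$, your containment $\subseteq$ is the same entrywise valuation estimate, and your second family $M_x$ is literally the matrix the paper uses in the case $c=2(n-d)$. The only cosmetic difference is in the case $c=d$, where the paper produces the upper-triangular elements by conjugating an explicit unitary upper-triangular matrix written in the basis $\widetilde{\cB}$ (with the choice $x=(1-\lambda)\beta$), whereas you solve the resulting congruence on $x_{13}$ directly via the trace pairing; the two arguments amount to the same computation.
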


\begin{proof}
By comparing the entries on the left and the right-hand sides of \eqref{eq:stab}, we obtain: 
\begin{itemize}
\item {\bf $(1,1)$:} $v(x_{11} - y_{11}) \geq 2(n-d)$, 
\item {\bf $(1,2)$:} $v(x_{11} - 1) \geq \min(d, 2(n-d))$,  
\item {\bf $(2,1)$:} $v(y_{21}) \geq 2(n-d)$ since $y_{21} = \overline{\beta} x_{31}$, 
\item {\bf $(2,2)$:} $v(y_{22} - 1) \geq 2(n-d)$, 
\item {\bf $(2,3)$:} $v(x_{33} - 1) \geq \min (d, 2(n-d))$. 
\end{itemize}
This means that if $c = \min(d, 2(n-d))$ then $\det(A) = x_{11} x_{33} - x_{13} x_{31}$ satisfies $v(\det(A) - 1) \geq c$ so $\det(A) \in \cO^1_c$ by Lemma~\ref{lem:filt} (one needs $c > 0$ to apply the lemma, but note that if $c = 0$ there is nothing to prove). Conversely, take any element $a + \eta b \in \cO_{c}^\times$ (here, $a \in \cO_{k_0}^\times$ and $v(b) = c$) and let $\lambda = 1 + \eta a^{-1} b \in \cO_{c}^\times$ where $\eta \in \cO_{k}$ is as in the proof of Lemma~\ref{lem:filt}. Consider the element 
$\ds \lambda / \overline{\lambda} \in \cO^1_c$. It remains to show that there exists $A \in \Stab_{H}(L_{V}, L_{W})$ such that $\det(A) = \lambda / \overline{\lambda}$.  
There are two cases we will consider: 

\vspace{0.1in}

\noindent {\bf Case 1: $d \leq 2(n-d)$.} In this case $c = d$ and we will write a upper-triangular matrix with respect to the basis $\widetilde{\mathscr B}$ stabilizing both $L_{V}$ and $L_{W} \oplus L_{D}$ whose determinant is $\lambda/\overline{\lambda}$ and whose conjugate under $S$ will be exactly of the form 
${\mthree {\star} {} {\star} {} {1} {} {\star} {} {\star}}$ (i.e., it will give us an element $h \in H$ whose determinant is $\lambda / \overline{\lambda}$). We will be looking for a matrix of the form 
$$
B =  
{\mthree {1} {x} {y} {} {1} {-\overline{x}} {} {} {1}} \cdot {\mthree {\lambda} {} {} {} {1} {} {} {} {\overline{\lambda}^{-1}}} = {\mthree {\lambda} {x} {\overline{\lambda}^{-1} y} {} {1} { -\overline{\lambda}^{-1}\overline{x}} {} {} {\overline{\lambda}^{-1}}},  
$$
where $x \overline{x} + y + \overline{y} = 0$ (i.e., a unitary matrix). Moreover, the matrix leaves $L_{V}$ stable if and only if $v(x) \geq c$ and $v(y) \geq 2c$ (note that as it is lower-triangular, it always leaves $L_{W} \oplus L_{D}$ stable). We now calculate 
$$
S^{-1}_{\mathscr B} B S_{\mathscr B} = {\mthree {1} {-\beta} {\overline{\gamma}} {} {1} {\overline{\beta}} {} {} {1}} \cdot  {\mthree {\lambda} {x} {\overline{\lambda}^{-1} y} {} {1} { -\overline{\lambda}^{-1}\overline{x}} {} {} {\overline{\lambda}^{-1}}} \cdot 
{\mthree {1} {\beta} {\gamma} {} {1} {-\overline{\beta}} {} {} {1}} = 
{\mthree {\lambda} {x + (\lambda-1)\beta} {C}
{} {1} {-\overline{\lambda}^{-1} \overline{x} + \overline{\beta}(\overline{\lambda}^{-1}-1)} 
{} {} {\overline{\lambda}^{-1}}}, 
$$
where 
$$
C = -\overline{\lambda}^{-1} y + \beta \overline{\lambda}^{-1} \overline{x} + \overline{\lambda}^{-1} \overline{\gamma} + \lambda \gamma - (x - \beta) \overline{\beta}.
$$ 
We thus want to make $x + (\lambda-1)\beta = 0$, i.e., $x = (1-\lambda)\beta$. For this particular $x$, we check immediately that the entry 
$-\overline{\lambda}^{-1} \overline{x} + \overline{\beta}(\overline{\lambda}^{-1}-1) = 0$ as well. In addition, since $v(1-\lambda) \geq c = d$ then $v(x) \geq c$. We only need to choose $y$ so that $v(y) \geq 2c$. But the only constraint on $y$ is that $x \overline{x} + y + \overline{y} = 0$ and hence, we can choose $y = s + \eta t$ where $s= \overline{x} x / 2$ (we are using that $\tau$ is of odd residue characteristic) and 
$t \in \p_\tau^{2c}$ is arbitrary (the latter will guarantee that $v(y) \geq 2c$). 
\vspace{0.1in}

\noindent {\bf Case 2: $d > 2(n-d)$.} In this case $c = 2(n-d)$. Consider the following matrix (in $H$ with respect to the basis $\cB$): 
$$
A = {\mthree {1 - \gamma x} {} {\gamma \overline{\gamma} x} {} {1} {} {x} {} {1 - \overline{\gamma} x}}, \qquad x = \frac{1 - \lambda / \overline{\lambda}}{ \gamma + \overline{\gamma}}.  
$$ 
Note that $x \in \cO_{k}$ as $\gamma + \overline{\gamma} = - \beta \overline{\beta} \in \cO_{k}^{\times}$. We check that $\det(A) = \lambda / \overline{\lambda}$. Moreover, using $x + \overline{x} = (\gamma + \overline{\gamma})x \overline{x}$, we obtain that $\leftexp{t}{\overline{A}} J_3 A = J_3$ where $\ds J_3 = {\mthree 0 0 1 0 1 0 1 0 0}$. Moreover, $v(x) \geq c = 2(n-d)$ as $\lambda / \overline{\lambda} \in U^1(c)$. We only need to check that $B = S_{\mathscr B}AS_{\mathscr B}^{-1}$ is of the form given by \eqref{eq:stab2}. But one computes (using $\beta \overline{\beta} = - \gamma - \overline{\gamma}$) 
$$
S_{\mathscr B}AS_{\mathscr B}^{-1} = {\mthree {1} {0} {0} {-\overline{\beta}x} {1 + \beta \overline{\beta} x} {0} {x} {-\beta x} {1}}, 
$$
which proves that $A$ stabilizes the pair $(L_{V}, L_{W})$. This proves the lemma. 
\end{proof}

\comment{ 
{\bf $A$ is unitary $\Rightarrow$} Since $\overline{A}^t J_3 A = J_3$, we easily get that $y = \varpi^{n-d} y'$ for some $y' \in \cO_{E, \tau}$. 

\vspace{0.1in}

\noindent We then use the fact that there exists $B \in {\mthree {\cO_{E, \tau}} {\cO_{E, \tau}} {\cO_{E, \tau}} {\varpi^n \cO_{E, \tau}} {\cO_{E, \tau}} {\cO_{E, \tau}} {\varpi^{2n}\cO_{E, \tau}} {\varpi^n \cO_{E, \tau}} {\cO_{E, \tau}}}$ such that $SA = BS$. This, together with the congruence conditions on the change-of-basis matrix $S$, will give us congruence conditions on $x, y$ and $w$ and hence, a restriction on $\det A$. More precisely, 
\begin{equation}\label{eq:matrixcompare}
{\mthree {\alpha_+} {\beta_+} {\gamma_+} {\alpha_0} {1 + \varpi^{n-d} \beta_0'} {\gamma_0} {\alpha_-} {\varpi^{n-d} \beta_-'} {\gamma_-}}
{\mthree {x} {} {\varpi^{n-d} y'} {} {1} {} {\varpi^{n-d} z} {} {w}}  = 
{\mthree {*} {*} {*} {\varpi^n *} {*} {*} {\varpi^{2n}*} {\varpi^n *} {s}}
{\mthree {\alpha_+} {\beta_+} {\gamma_+} {\alpha_0} {1 + \varpi^{n-d} \beta_0'} {\gamma_0} {\alpha_-} {\varpi^{n-d} \beta_-'} {\gamma_-}}
\end{equation}

\noindent Let $c = \min(d, n-d)$. We now compare the following entries for the left and the right-hand sides:

\vspace{0.1in}

\noindent {\bf (3,1):} This yields the congruence
$$
\alpha_- x + \varpi^{n-d} z \gamma_- \equiv s \alpha_- \bmod \varpi^n, 
$$
which implies that $\alpha_- s \equiv \alpha_- x \bmod \varpi^{n-d}$. Since $\varpi \nmid \alpha_-$ by Lemma~\ref{lem:Scong}(iii), we have 
\begin{equation}
x \equiv s \bmod \varpi^{n-d}.
\end{equation}

\vspace{0.1in}
\noindent {\bf (3,2):} This gives us the congruence 
$\varpi^{n-d} \beta_-' \equiv \varpi^{n-d} \beta_-'s \bmod \varpi^n$ which yields (via Lemma~\ref{lem:Scong}) 
\begin{equation}\label{eq:sentry}
s \equiv 1 \bmod \varpi^d. 
\end{equation}

\vspace{0.1in} 

\noindent {\bf (3,3):} This yields that $\varpi^{n-d} y' \alpha_- + \gamma_- w \equiv s \gamma_- \bmod \varpi^n$. Since $\varpi \nmid \gamma_-$, we get 
\begin{equation}
w \equiv s \bmod \varpi^{n-d}. 
\end{equation}

\noindent Finally, the three equations imply the congruence 
$$
x \equiv 1 \equiv w \bmod \varpi^c, 
$$
which yields that $\det(A) \in 1 + \varpi^c \cO_{E, \tau}$. Finally, to show that the image of the stabilizer under the determinant map is the entire $1 + \varpi^c \cO_{E, \tau}$, we need to exhibit an element in the stabilizer, such that $\det(A) \in 1 + \varpi^c \cO_{E, \tau}$, but $\det(A) \notin 1 + \varpi^{c+1} \cO_{E, \tau}$.
}

\comment {
First, since 
$$
\cO_{E, \tau} e_+ \oplus \cO_{E, \tau} e_0 \oplus \cO_{E, \tau} e_-   = L'' = \cO_{E, \tau} \widetilde{e}_+ \oplus \cO_{E, \tau} \widetilde{e}_0 \oplus \cO_{E, \tau} \widetilde{e}_-, 
$$
it follows that $S \in \GL_3(\cO_{E, \tau})$. Here, we establish some congruence properties of the integral of $S$ in a sequence of auxiliary lemmas: 
\begin{lem}\label{lem:Scong}
The matrix $S$ satisfies the following properties: 
\begin{enumerate}
\item There exists $\beta_-' \in \cO_{E, \tau}$ such that $\beta_- = \varpi^{n - d} \beta_-' $ and $\varpi \nmid \beta_- '$. 

\item One can modify $\widetilde{e_0}$ by a multiplication by a suitably chosen unit so that $\beta_0 = 1 + \varpi^{n-d} \beta_0' $.

\item The entries $\alpha_-$ and $\gamma_-$ are not divisible by $\varpi$, i.e., $\varpi \nmid \alpha_-, \gamma_-$. 
\end{enumerate}
\end{lem}

\begin{proof}
Finally, since  $\langle \widetilde{e}_0, \widetilde{e}_0 \rangle =1$, one can modify $\widetilde{e}_0$ 
by a suitable unit in $\cO_{E, \tau}^\times$ so that $\beta_0 = 1 + \varpi^{n-d} \beta_0'$ for some $\beta_0' \in \cO_{E, \tau}$.  
\end{proof}

\noindent The lemma implies that the change-of-basis matrix is of the form
%Then $\widetilde{e_0} = v_+ e_+ + v_0 e_0 + v_- e_-$ and since $\varpi^{d} e_0 \in L_V$, we get $v_- = \varpi^{n-d} v_-'$ for some $v_-' \in \cO_E$. Since, without loss of generality, we can assume that $v_0 = 1 + \varpi^{n-d} v_0'$ for some $v_0' \in \cO_E$. The matrix is 
$\ds S = {\mthree {\alpha_+} {\beta_+} {\gamma_+} {\alpha_0} {1 + \varpi^{n-d} \beta_0'} {\gamma_0} {\alpha_-} {\varpi^{n-d} \beta_-'} {\gamma_-}}$. 
}

\comment{
\paragraph{The basis $\cB = \{e_+, e_0, e_-\}$.} 
First, note that there is a unique self-dual lattice $L_{D_\tau}$ in the line $D_\tau$.
Given a pair $(L_{V}, L_{W})$, consider the self-dual lattices $L' = L_{V}$ and $L'' = L_{W} \oplus L_{D_\tau}$ as hyperspecial (black points) on $\cB(V)$. According to \cite[Prop.1.3]{goldman-iwahori} (see also \cite[Cor.1.4]{goldman-iwahori}), there exists a choice of a Witt basis $\cB = \{e_+, e_0, e_-\}$ for $V$ such that the corresponding apartment $\cA(\cB) \subset \cB(V)$ that contains both $L'$ and $L''$. Without loss of generality, assume that 
\begin{equation}\label{eq:LV}
L' = \varpi^{-n} \cO_{E, \tau} e_+ \oplus \cO_{E, \tau} e_0 \oplus \varpi^n \cO_{E, \tau} e_-, 
\end{equation}
%with respect to $\cB$ and that $L_{V'_v}$ is given by 
and
\begin{equation}\label{eq:LVprime}
L'' = L_{W} \oplus L_{D_\tau} = \cO_{E, \tau} e_+ \oplus \cO_{E, \tau} e_0 \oplus \cO_{E, \tau} e_- . 
\end{equation}
Here, $n = \dist(L', L'')$ is the distance between $L'$ and $L''$ in the Bruhat--Tits tree 
$\cB(V)$. 

\paragraph{The basis $\widetilde{\cB} = \{\widetilde{e}_+, \widetilde{e}_0, \widetilde{e}_-\}$.}
Choose a basis vector $\widetilde{e}_0 \in D_\tau$ for the unique self-dual lattice $L_{D_\tau}$ that satisfies $\langle \widetilde{e}_0, \widetilde{e}_0 \rangle =1$. Clearly, $L'' \cap D_\tau = \cO_{E, \tau} \widetilde{e}_0$.  
%In addition, $L'' \cap D_\tau$ is the unique 1-dimensional self-dual lattice in $D_\tau$, i.e., 
%$$ for some vector $\widetilde{e}_0 \in D_\tau$ that satisfies $\langle \widetilde{e}_0, \widetilde{e}_0 \rangle =1$.  
Since $L'$ is a self-dual lattice in $V$, we have $L' \cap D_\tau \subset (L' \cap D_\tau)^\vee$, i.e., $L' \cap D_\tau = \varpi^d (L'' \cap D_\tau) = \varpi^d \cO_{E, \tau} \widetilde{e}_0$ for some integer $d \geq 0$ (in fact, $d = \dist_{\cB(V)}(L_{V}, \cB(W))$ is the distance between the hyperspecial point corresponding to $L_{V}$ and the sub-building $\cB(W)$). 
Consider next the self-dual lattices $L_{W}$ and $\pr_{W}(L_{V})$ \dimnote{Explain somewhere (without possibly using self-dual norms) what $\pr_{W_{\tau}}(L_{V_{\tau}})$ is.} Here, $\pr_{W}(L_{V})$ can be viewed as the self-dual lattice $W$ corresponding to the hyperspecial (black) point of $\cB(W)$ obtained by projecting the hyperspecial point 
$L_{\tau}$ to the sub-building $\cB(W)$. According to, e.g., \cite[Cor.1.4]{goldman-iwahori}, there exists a Witt basis 
$\{\widetilde{e}_+, \widetilde{e}_-\}$ of $W$ that is simultaneously adapted to these two lattices. 
Here, $\widetilde{e}_+$ and $\widetilde{e}_-$ are isotropic vectors and 
$\langle \widetilde{e}_+, \widetilde{e}_- \rangle = 1$. Moreover, we can assume that we have chosen $\{ \widetilde{e}_+, \widetilde{e}_- \}$ in such a way that 
\begin{equation}
L_{W} = \cO_{E, \tau} \widetilde{e}_+ \oplus \cO_{E, \tau} \widetilde{e}_- \qquad \text{and} \qquad 
\pr_{W} ( L_{V}) = \varpi^{-n+d} \cO_{E, \tau} \widetilde{e}_+ \oplus \varpi^{n-d} \cO_{E, \tau} \widetilde{e}_-.  
\end{equation}
Clearly, $\widetilde{\cB} = \{\widetilde{e}_+, \widetilde{e}_0, \widetilde{e}_-\}$ is another Witt basis for $V$. 

\paragraph{The change of basis matrix $S$.}
Let $S$ be the change of basis matrix from $\cB$ to $\widetilde{\cB}$. More explicitly, let 
$\ds S = {\mthree {\alpha_+} {\beta_+} {\gamma_+} {\alpha_0} {\beta_0} {\gamma_0} {\alpha_-} {\beta_-} {\gamma_-}}$. First, since 
$$
\cO_{E, \tau} e_+ \oplus \cO_{E, \tau} e_0 \oplus \cO_{E, \tau} e_-   = L'' = \cO_{E, \tau} \widetilde{e}_+ \oplus \cO_{E, \tau} \widetilde{e}_0 \oplus \cO_{E, \tau} \widetilde{e}_-, 
$$
it follows that $S \in \GL_3(\cO_{E, \tau})$. Here, we establish some congruence properties of the integral of $S$ in a sequence of auxiliary lemmas: 
\begin{lem}\label{lem:Scong}
The matrix $S$ satisfies the following properties: 
\begin{enumerate}
\item There exists $\beta_-' \in \cO_{E, \tau}$ such that $\beta_- = \varpi^{n - d} \beta_-' $ and $\varpi \nmid \beta_- '$. 

\item One can modify $\widetilde{e_0}$ by a multiplication by a suitably chosen unit so that $\beta_0 = 1 + \varpi^{n-d} \beta_0' $.

\item The entries $\alpha_-$ and $\gamma_-$ are not divisible by $\varpi$, i.e., $\varpi \nmid \alpha_-, \gamma_-$. 
\end{enumerate}
\end{lem}

\begin{proof}
Finally, since  $\langle \widetilde{e}_0, \widetilde{e}_0 \rangle =1$, one can modify $\widetilde{e}_0$ 
by a suitable unit in $\cO_{E, \tau}^\times$ so that $\beta_0 = 1 + \varpi^{n-d} \beta_0'$ for some $\beta_0' \in \cO_{E, \tau}$.  
\end{proof}

\noindent The lemma implies that the change-of-basis matrix is of the form
%Then $\widetilde{e_0} = v_+ e_+ + v_0 e_0 + v_- e_-$ and since $\varpi^{d} e_0 \in L_V$, we get $v_- = \varpi^{n-d} v_-'$ for some $v_-' \in \cO_E$. Since, without loss of generality, we can assume that $v_0 = 1 + \varpi^{n-d} v_0'$ for some $v_0' \in \cO_E$. The matrix is 
$\ds S = {\mthree {\alpha_+} {\beta_+} {\gamma_+} {\alpha_0} {1 + \varpi^{n-d} \beta_0'} {\gamma_0} {\alpha_-} {\varpi^{n-d} \beta_-'} {\gamma_-}}$.

\paragraph{Computing stabilizers.}
We now explain how to compute the stabilizer $\Stab_{\U(W)(k_0)}(L_{V}, L_{W})$. To do this, we first compute the stabilizer $\Stab_{\U(V)(F_v)}(L')$ with respect to the basis $\cB$ and then intersect it with $\Stab_{\U(W)(k_0)}(L_{W}, \pr_{W}(L_{V}))$ (computed with respect to the basis $\overline{\cB}$ and converted via the change of basis matrix). To prove the theorem, it suffices to take the image of the stabilizer under the determinant map.

\paragraph{Computing $\Stab_{\U(W)(k_0)}(L_{W}, \pr_{W}(L_{V}))$.}
Since $\dist (L_{W}, \pr_{W} (L_{V})) = n-d$, 
the stabilizer $\Stab_{\U(W)(k_0)}(L_{W}, \pr_{W} (L_{V_{\tau}}))$ computed 
with respect to $\widetilde{\cB}$ and viewed as a subgroup of $\U(V)(k_0)$ is 
\begin{equation}\label{eq:stab1}
\Stab_{\U(W_{\tau})}(L_{W}, \pr_{W} (L_{V}) ) =  \U({V})(k_0) \cap S^{-1} 
{\mthree {\cO_{E, \tau}} {} {\cO_{E, \tau}} {} {\cO_{E, \tau}} {} {\varpi^{n-d} \cO_{E, \tau}} {} {\cO_{E, \tau}}} S \subset \U(V)(k_0). 
\end{equation} 
Note that we have used the fact (this is quite obvious from the drawing of the two trees)
$$
n = \dist(L', L'') = \dist (L', \pr_{W} (L')) + 
\dist (\pr_{W} (L'), L''). 
$$

%\begin{lem}\label{lem:geodes}
%If $\alpha_V \in \cB(V)$ and $\alpha_W \in \cB(W)$. Then the geodesic from $\alpha_V$ to $\ex_V(\alpha_W)$ 
%passes through $\pr_W(\alpha_V)$, i.e., 
%$$
%\dist(\alpha_V, \ex_V(\alpha_W)) = \dist(\alpha_V, \ex_V(\pr_W(\alpha_V))) + \dist(\pr_W(\alpha_V), \alpha_W). 
%$$
%\end{lem}
%\begin{proof}
%\end{proof}

\noindent To find the stabilizer of $(L', L'')$, we need to intersect the stabilizer from \eqref{eq:stab1} with $\Stab_{\U(V)(k_0)}(L_{V})$. 

\paragraph{Computing $\Stab_{\U(V)(k_0)}(L_{V})$.} It follows from \eqref{eq:LV} 
\begin{equation}\label{eq:stab2}
\Stab_{\U(V)}(L_{V}) = \U(V)(k_0) \cap {\mthree {\cO_{E, \tau}} {\cO_{E, \tau}} {\cO_{E, \tau}} {\varpi^n \cO_{E, \tau}} {\cO_{E, \tau} } {\cO_{E, \tau}} {\varpi^{2n}\cO_{E, \tau}} {\varpi^n \cO_{E ,\tau}} {\cO_{E, \tau}}}. 
\end{equation}
We are thus left with computing the image under the determinant map of the intersection 
$$
\Stab_{\U(V)(k_0)}(L_V, L_W) = \U(V)(k_0) \cap {\mthree {\cO_{E, \tau}} {\cO_{E, \tau}} {\cO_{E, \tau}} {\varpi^n \cO_{E, \tau}} {\cO_{E, \tau}} {\cO_{E, \tau}} {\varpi^{2n}\cO_{E, \tau}} {\varpi^n \cO_{E, \tau}} {\cO_{E, \tau}}} \cap S  
{\mthree {\cO_{E, \tau}} {} {\cO_{E, \tau}} {} {\cO_{E, \tau}} {} {\varpi^{n-d} \cO_{E, \tau}} {} {\cO_{E, \tau}}} S^{-1}. 
$$
To do this, take the matrix $A = {\mthree {x} {} {y} {} {1} {} {\varpi^{n-d} z} {} {w}} \in {\mthree {\cO_{E, \tau}} {} {\cO_{E, \tau}} {} {\cO_{E, \tau}} {} {\varpi^{n-d} \cO_{E, \tau}} {} {\cO_{E, \tau}}}$. Note that  

\vspace{0.1in}
 
{\bf $A$ is unitary $\Rightarrow$} Since $\overline{A}^t J_3 A = J_3$, we easily get that $y = \varpi^{n-d} y'$ for some $y' \in \cO_{E, \tau}$. 

\vspace{0.1in}

\noindent We then use the fact that there exists $B \in {\mthree {\cO_{E, \tau}} {\cO_{E, \tau}} {\cO_{E, \tau}} {\varpi^n \cO_{E, \tau}} {\cO_{E, \tau}} {\cO_{E, \tau}} {\varpi^{2n}\cO_{E, \tau}} {\varpi^n \cO_{E, \tau}} {\cO_{E, \tau}}}$ such that $SA = BS$. This, together with the congruence conditions on the change-of-basis matrix $S$, will give us congruence conditions on $x, y$ and $w$ and hence, a restriction on $\det A$. More precisely, 
\begin{equation}\label{eq:matrixcompare}
{\mthree {\alpha_+} {\beta_+} {\gamma_+} {\alpha_0} {1 + \varpi^{n-d} \beta_0'} {\gamma_0} {\alpha_-} {\varpi^{n-d} \beta_-'} {\gamma_-}}
{\mthree {x} {} {\varpi^{n-d} y'} {} {1} {} {\varpi^{n-d} z} {} {w}}  = 
{\mthree {*} {*} {*} {\varpi^n *} {*} {*} {\varpi^{2n}*} {\varpi^n *} {s}}
{\mthree {\alpha_+} {\beta_+} {\gamma_+} {\alpha_0} {1 + \varpi^{n-d} \beta_0'} {\gamma_0} {\alpha_-} {\varpi^{n-d} \beta_-'} {\gamma_-}}
\end{equation}

\noindent Let $c = \min(d, n-d)$. We now compare the following entries for the left and the right-hand sides:

\vspace{0.1in}

\noindent {\bf (3,1):} This yields the congruence
$$
\alpha_- x + \varpi^{n-d} z \gamma_- \equiv s \alpha_- \bmod \varpi^n, 
$$
which implies that $\alpha_- s \equiv \alpha_- x \bmod \varpi^{n-d}$. Since $\varpi \nmid \alpha_-$ by Lemma~\ref{lem:Scong}(iii), we have 
\begin{equation}
x \equiv s \bmod \varpi^{n-d}.
\end{equation}

\vspace{0.1in}
\noindent {\bf (3,2):} This gives us the congruence 
$\varpi^{n-d} \beta_-' \equiv \varpi^{n-d} \beta_-'s \bmod \varpi^n$ which yields (via Lemma~\ref{lem:Scong}) 
\begin{equation}\label{eq:sentry}
s \equiv 1 \bmod \varpi^d. 
\end{equation}

\vspace{0.1in} 

\noindent {\bf (3,3):} This yields that $\varpi^{n-d} y' \alpha_- + \gamma_- w \equiv s \gamma_- \bmod \varpi^n$. Since $\varpi \nmid \gamma_-$, we get 
\begin{equation}
w \equiv s \bmod \varpi^{n-d}. 
\end{equation}

\noindent Finally, the three equations imply the congruence 
$$
x \equiv 1 \equiv w \bmod \varpi^c, 
$$
which yields that $\det(A) \in 1 + \varpi^c \cO_{E, \tau}$. Finally, to show that the image of the stabilizer under the determinant map is the entire $1 + \varpi^c \cO_{E, \tau}$, we need to exhibit an element in the stabilizer, such that $\det(A) \in 1 + \varpi^c \cO_{E, \tau}$, but $\det(A) \notin 1 + \varpi^{c+1} \cO_{E, \tau}$. 
}

\subsection{Local invariants of Galois orbits}
Given $x = (x_{V}, x_{W}) \in \Hyp = \Hyp_{V} \times \Hyp_{W}$, define $\inv(x)$ to be the pair $(a, b)$ where $a = \dist(x_{V}, \pr_{W}(x_{V}))$ and $b = \dist(\pr_{W}(x_{V}), x_{W})$. 
%The following result classifies $H$-orbits of elements of $\Hyp$: 

\begin{prop}\label{prop:orbits}
Two points $x, y \in \Hyp$ lie on the same $H$-orbit if and only if $\inv(x) = \inv(y)$. 
\end{prop}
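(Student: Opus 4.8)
The plan is to prove the two implications separately; the forward one is immediate from equivariance, and the converse reduces an arbitrary pair of lattices to a normal form built from the apparatus already set up for Theorem~\ref{thm:galois}.

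\emph{The easy implication.} The group $H_\tau$ acts on $\cB(V_\tau)$ and on $\cB(W_\tau)$ by simplicial isometries, and the inclusion $\cB(W_\tau)\hookrightarrow\cB(V_\tau)$, $N\mapsto N\oplus L_{D,\tau}$, is $H_\tau$-equivariant because an element of $H_\tau=G_{W,\tau}$ acts on $V_\tau$ by the identity on $D_\tau$ and hence preserves $W_\tau$. Since $\cB(W_\tau)$ is a closed convex subset of the tree $\cB(V_\tau)$, the nearest-point projection $\pr_{W_\tau}$ is determined metrically, so $\pr_{W_\tau}(g\cdot z)=g\cdot\pr_{W_\tau}(z)$ for $g\in H_\tau$. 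Hence both coordinates of $\inv$ are constant on $H_\tau$-orbits; they are non-negative integers, since hyperspecial vertices project to hyperspecial vertices and $\cB(V_\tau)$ is a tree.

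\emph{The converse.} Fix a Witt basis $\cB^{0}=\{e_+,e_0,e_-\}$ of $V_\tau$ with $e_0$ a generator of the unique self-dual lattice $L_{D,\tau}\subset D_\tau$, and set $O_W=\langle e_+,e_-\rangle$, $L_W^{0}=\langle\varpi^{-b}e_+,\varpi^{b}e_-\rangle$, two hyperspecial vertices of $\cB(W_\tau)$ at distance $b$. The group $G_{W,\tau}$ acts transitively on ordered pairs of hyperspecial vertices of $\cB(W_\tau)$ at distance $b$ (two such pairs lie in apartments, and a Witt basis of $W_\tau$ is carried onto any other by a unitary isometry, possibly after interchanging the two isotropic vectors, which is itself unitary), and $\pr_{W_\tau}$ is $H_\tau$-equivariant; so I may move $x=(L_V,L_W)$ by $H_\tau$ so that $\pr_{W_\tau}(L_V)=O_W\oplus L_{D,\tau}$ and $L_W=L_W^{0}$, and likewise $x'=(L_V',L_W')$. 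Next I put $L_V$ into normal form exactly as in the proof of Theorem~\ref{thm:galois}: by Lemma~\ref{lem:maxflats} the geodesic from $O_W\oplus L_{D,\tau}$ to $L_V$ lies in an apartment, and choosing a Witt basis $\widetilde\cB$ for it as there, Lemma~\ref{lem:Scong} gives $L_V=S(\beta,\gamma)\cdot\langle\varpi^{a}e_+,e_0,\varpi^{-a}e_-\rangle$ where $S(\beta,\gamma)=\mthree{1}{\beta}{\gamma}{0}{1}{-\overline{\beta}}{0}{0}{1}$ with $\beta,\gamma\in\cO_{E_\tau}^{\times}$ and $\beta\overline{\beta}+\gamma+\overline{\gamma}=0$; the same for $L_V'$ with data $(\beta',\gamma')$. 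Replacing $\cB^{0}$ by $\{ue_+,\zeta e_0,\overline{u}^{-1}e_-\}$ with $u\in\cO_{E_\tau}^{\times}$ and $\zeta$ of norm $1$ fixes $O_W$, $L_W^{0}$, $L_{D,\tau}$ and $\pr_{W_\tau}(L_V)$ and multiplies $\beta$ by a prescribed unit, so I may assume $\beta=\beta'=\beta_0$ for one fixed unit $\beta_0$.

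It then suffices to check that $h:=S(\beta_0,\gamma')\,S(\beta_0,\gamma)^{-1}$ does the job. A short computation using $\beta_0\overline{\beta_0}+\gamma+\overline{\gamma}=\beta_0\overline{\beta_0}+\gamma'+\overline{\gamma'}=0$ gives $h=\mthree{1}{0}{\gamma'-\gamma}{0}{1}{0}{0}{0}{1}$ with $\gamma'-\gamma$ of trace $0$; hence $h$ is unitary, lies in $\GL_3(\cO_{E_\tau})$, fixes $e_0$ (so $D_\tau$ pointwise, i.e.\ $h\in H_\tau$), and — because $b\ge0$ — preserves $\langle\varpi^{-b}e_+,e_0,\varpi^{b}e_-\rangle=L_W^{0}\oplus L_{D,\tau}$, while $h\cdot L_V=S(\beta_0,\gamma')\langle\varpi^{a}e_+,e_0,\varpi^{-a}e_-\rangle=L_V'$; thus $x$ and $x'$ lie in one $H_\tau$-orbit. (That $\inv$ is onto $\{(a,b):a,b\ge0\}$ follows by taking $L_W^{0}$ as above and $L_V=S(\beta_0,\gamma_0)\langle\varpi^{a}e_+,e_0,\varpi^{-a}e_-\rangle$ for any admissible $(\beta_0,\gamma_0)$, the condition $\beta_0\in\cO_{E_\tau}^{\times}$ forcing the geodesic to leave $\cB(W_\tau)$ as in Lemma~\ref{lem:Scong}.) The one genuinely delicate point is the bookkeeping in the previous paragraph — normalising both pairs simultaneously with the \emph{same} $\beta_0$ — together with the constraint-checking for $h$, which leans on the unipotent shape of $S$ and the Hermitian relation $\beta\overline{\beta}+\gamma+\overline{\gamma}=0$; everything else is the transitivity of $G_{W,\tau}$ on configurations inside the tree $\cB(W_\tau)$.
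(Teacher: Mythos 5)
Your overall strategy is sound and more matrix-explicit than the paper's own proof (which establishes transitivity of $H_\tau$ on ``special apartments'', Lemma~\ref{lem:transitive}, and then carries one configuration onto the other), but one step fails as written: the normalization ``$\beta=\beta'=\beta_0$''. Replacing the ambient basis $\cB^0$ by $\{ue_+,\zeta e_0,\overline{u}^{-1}e_-\}$ is only a re-coordinatization; it rescales the data of \emph{both} pairs by the same unit, $\beta\mapsto u^{-1}\zeta\beta$ and $\beta'\mapsto u^{-1}\zeta\beta'$, so it can never force $\beta=\beta'$ unless they were already equal. Nor can you instead use $\diag(u,\zeta,\overline{u}^{-1})$ with $\zeta\neq 1$ as a group element: members of $H_\tau$ act trivially on $D_\tau$, hence must fix $e_0$. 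And the equality $\beta=\beta'$ is genuinely needed, since for $\beta\neq\beta'$ the product $S(\beta',\gamma')S(\beta,\gamma)^{-1}$ has $(1,2)$-entry $\beta'-\beta$, does not preserve $D_\tau$, and so is not in $H_\tau$.

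The repair stays entirely inside your framework and is exactly the maneuver in the paper's proof of Lemma~\ref{lem:transitive}: absorb the unit into an element of $H_\tau$ rather than into the ambient basis. The element $h_0=\diag(u,1,\overline{u}^{-1})$ with $u=\beta'\beta^{-1}\in\cO_{E_\tau}^\times$ lies in $H_\tau$, stabilizes $O_W\oplus L_{D,\tau}$, $L_W^0$ and $\Lambda_a=\langle\varpi^{a}e_+,e_0,\varpi^{-a}e_-\rangle$, and conjugates $S(\beta,\gamma)$ to $S(u\beta,u\overline{u}\gamma)=S(\beta',u\overline{u}\gamma)$; so applying $h_0$ to the pair $(L_V,L_W^0)$ keeps its normalization and replaces $\beta$ by $\beta'$. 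Your computation then goes through verbatim: $h=S(\beta',\gamma')S(\beta',u\overline{u}\gamma)^{-1}=\mthree{1}{0}{\gamma'-u\overline{u}\gamma}{0}{1}{0}{0}{0}{1}$ has trace-zero corner entry (because $u\overline{u}\,\beta\overline{\beta}=\beta'\overline{\beta'}$), is unitary, lies in $H_\tau$, fixes $L_W^0$, and carries $h_0L_V$ to $L_V'$. One further small point to make explicit: the unipotent normal form of Lemma~\ref{lem:Scong} must be achieved by rescaling only the Witt basis $\widetilde\cB$ adapted to $L_V$ (which does not change the lattices $\langle\varpi^{m}\widetilde e_+,\widetilde e_0,\varpi^{-m}\widetilde e_-\rangle$), so that the fixed $\cB^0$ is untouched and both pairs really are expressed in the same coordinates. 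With these corrections your argument is a valid alternative to the paper's proof; the easy direction and the transitivity of $H_\tau$ on ordered pairs of hyperspecial vertices of $\cB(W_\tau)$ at a given distance are fine as you state them.
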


\noindent To prove the proposition, we introduce the notion of a \emph{special apartment} for the building $\cB(V)$. An apartment $\cA$ determined by a Witt basis $\{\overline{e}_+, \overline{e}_-, \overline{e}_0\}$ is called \emph{special} if the intersection $\cA \cap \cB(W)$ is a half-line. Let $\cS$ be the set of special apartments.

\begin{lem}\label{lem:transitive}
The group $H$ acts transitively on $\cS$.  
\end{lem}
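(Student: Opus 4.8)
The plan is to identify a special apartment with its pair of ends at infinity, thereby turning the lemma into a statement about the action of $\U(W_\tau)$ on pairs of isotropic lines, governed by Witt's theorem. Recall that the set of ends of the tree $\cB(V_\tau)$ is canonically the set of isotropic lines of $V_\tau$, that under $\cB(W_\tau)\hookrightarrow\cB(V_\tau)$ the subtree $\cB(W_\tau)$ is convex with set of ends the isotropic lines of $W_\tau$, and that the two ends of the apartment $\cA_{\overline\cB}$ attached to a Witt basis $\overline\cB=\{\overline e_+,\overline e_-,\overline e_0\}$ are $E\overline e_+$ and $E\overline e_-$. First I would record the description: $\cA_{\overline\cB}\in\cS_\tau$ if and only if exactly one of $E\overline e_+,\,E\overline e_-$ lies in $W_\tau$.

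To prove that description, the forward implication is soft: if $\cA\cap\cB(W_\tau)$ is a half-line $\rho$, then $\rho$ converges to one of the two ends of $\cA$, and since $\rho\subset\cB(W_\tau)$ that end is an isotropic line of $W_\tau$; it cannot be both ends, for then convexity of $\cB(W_\tau)$ would force $\cA\subseteq\cB(W_\tau)$, contradicting that $\rho$ is a proper half-line. For the converse one checks, by a direct computation with the lattices $\langle\varpi^{-m}\overline e_+,\overline e_0,\varpi^m\overline e_-\rangle$, that these are adapted to $V_\tau=W_\tau\perp D_\tau$ once $m$ is large, so $\cA\cap\cB(W_\tau)$ contains a ray toward $E\overline e_+$, and it is proper because $E\overline e_-\notin\partial\cB(W_\tau)$. (Since $H_\tau$ stabilizes $\cB(W_\tau)$ it stabilizes $\cS_\tau$, so strictly only the forward implication is needed below.) Sending $\cA\in\cS_\tau$ to the ordered pair $(\ell_+,\ell_-)$ of its ends, with $\ell_+\subset W_\tau$ and $\ell_-\not\subset W_\tau$, then gives an $H_\tau$-equivariant map $\Phi$ from $\cS_\tau$ into the set $\Omega$ of ordered pairs of opposite isotropic lines $(\ell_+,\ell_-)$ of $V_\tau$ with $\ell_+\subset W_\tau$ and $\ell_-\not\subset W_\tau$, and $\Phi$ is injective since a geodesic line in a tree is determined by its ends. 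Hence it suffices to show $H_\tau=\U(W_\tau)$ acts transitively on $\Omega$: for $\cA_1,\cA_2\in\cS_\tau$, any $g\in H_\tau$ with $g\,\Phi(\cA_1)=\Phi(\cA_2)$ satisfies $g\cA_1=\cA_2$.

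For transitivity on $\Omega$ I would argue in two steps. By Witt's extension theorem $\U(W_\tau)$ is transitive on the isotropic lines of the Hermitian space $W_\tau$ (which is isotropic since $\tau$ is allowable, so $\U(W)$ is quasi-split at $\tau$), so I may fix $\ell_+=\ell_+^0$ and set $B_W=\Stab_{\U(W_\tau)}(\ell_+^0)$, a Borel of $\U(W_\tau)$. Picking a Witt basis $\{\overline e_+,e_D,f_-\}$ of $V_\tau$ with $E\overline e_+=\ell_+^0$, $f_-\in W_\tau$, $e_D\in D_\tau$, one parametrizes the isotropic lines of $V_\tau$ opposite to $\ell_+^0$ by vectors $v=x\overline e_+ + y e_D + f_-$ subject to the isotropy relation on $(x,y)$, with $Ev\not\subset W_\tau$ exactly when $y\ne 0$; computing the action of a general element of $B_W$ (which sends $\overline e_+\mapsto a\overline e_+$, $f_-\mapsto at\overline e_+ + \overline a^{-1}f_-$ with $a\in E_\tau^\times$, $\Tr t=0$, and fixes $e_D$) one finds it moves $(x,y)$ to $(N(a)(x+t),\overline a y)$, which is transitive — indeed simply transitive — on the locus $y\ne 0$. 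This gives transitivity on $\Omega$, hence the lemma. The part I expect to be the main obstacle is the converse half of the description of $\cS_\tau$: verifying the adaptedness of $\langle\varpi^{-m}\overline e_+,\overline e_0,\varpi^m\overline e_-\rangle$ for large $m$, together with the convexity of $\cB(W_\tau)$ inside $\cB(V_\tau)$ and the identification of its boundary; once these building-theoretic facts are in place the rest is formal plus the Borel computation. An alternative, more geometric route avoiding the explicit characterization would first use transitivity of $\U(W_\tau)$ on rays of $\cB(W_\tau)$ (Witt on ends, Iwasawa decomposition on vertices, noting a geodesic can only leave $\cB(W_\tau)$ at a hyperspecial vertex by the neighbor counts of Figure~\ref{fig:buildings}) and then transitivity of the stabilizer of such a ray on the apartments of $\cB(V_\tau)$ containing it; this reproduces essentially the same computation.
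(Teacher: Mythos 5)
Your argument is correct, but it is organized quite differently from the paper's. The paper first uses transitivity of $H_\tau$ on the boundary of $\cB(W_\tau)$ to reduce to the case where the two special apartments share the \emph{same} half-line of intersection with $\cB(W_\tau)$, then picks an apartment $\cA\subset\cB(W_\tau)$ containing that half-line and invokes Lemma~\ref{lem:Scong} to write the two change-of-basis matrices $S',S''$ as unipotent upper-triangular matrices with unit entries; after rescaling one Witt basis by the unit $u=\beta'(\beta'')^{-1}$ it exhibits $\widetilde{S}'(S'')^{-1}$ explicitly as an element of $H_\tau$ carrying one apartment to the other. You instead pass entirely to the boundary: you encode a special apartment by its ordered pair of ends $(\ell_+,\ell_-)$ with $\ell_+\subset W_\tau$, $\ell_-\not\subset W_\tau$, note that this map is injective ($\,$a line in a tree is determined by its ends$\,$) and $H_\tau$-equivariant, and reduce the lemma to transitivity of $\U(W_\tau)$ on such pairs, which you get from Witt's theorem plus an explicit computation with the Borel $\Stab_{\U(W_\tau)}(\ell_+^0)$; I checked the $(x,y)\mapsto(N(a)(x+t),\overline{a}y)$ computation and the trace condition $t+\overline{t}=0$ is indeed solvable precisely because both pairs satisfy $x+\overline{x}+c\,y\overline{y}=0$, so the transitivity claim holds. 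Your observation that only the forward half of the characterization of $\cS_\tau$ is needed is also correct, since $H_\tau$ preserves $\cB(W_\tau)$ and hence $\cS_\tau$, and injectivity of the end map then finishes the argument; so the part you flagged as the main obstacle can simply be dropped (or, if you want it, the cleaner route is that a ray toward an end of the convex subtree $\cB(W_\tau)$ eventually lies in it). The trade-off between the two proofs: yours avoids Lemma~\ref{lem:Scong} and the slightly delicate reduction step of making the two intersection half-lines literally coincide (the paper gets this from transitivity on the boundary, which strictly speaking only matches the ends), at the price of importing standard facts about the building at infinity, namely that ends of $\cB(V_\tau)$ are isotropic lines, that $\cB(W_\tau)$ is convex with ends the isotropic lines of $W_\tau$, and that the ends of $\cA_{\overline{\cB}}$ are $E\overline{e}_\pm$; the paper's route stays inside the lattice-and-matrix framework it has already set up for the conductor computation, and its final matrix $\widetilde{S}'(S'')^{-1}$ is in effect the same Borel element (torus part $u$, unipotent part in the $(1,3)$ entry) that your stabilizer computation produces, so the two arguments are cousins at the level of the final computation even though the reductions differ.
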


\begin{proof}
Any special apartment in $\cS$ is determined by a pair of isotropic lines of $V$. One of these isotropic lines corresponds to a half-apartment of $\cB(W)$ whereas the other line is not a subspace of $W$. Since $H$ acts transitively on the set of isotropic lines of $W$, we can assume that the first of these two lines is exactly $e_+$. This arranges any two special apartments $\cA', \cA'' \in \cS$ to share a common half-line (determined by $e_+$) of $W$ as shown in Fig.~\ref{fig:apts2}. 

The stabilizer of $e_+$ is then the Borel subgroup $B_V \subset G_V$. We will be done if we show that $B_V$ acts transitively on the set of isotropic lines of $V$ that do not belong to $W$. These lines are all of the form $k (s_+ e_+ + e_0 + s_- e_-)$ with $s_+ \overline{s_-} + \overline{s_+} s_- + \langle e_0, e_0 \rangle = 0$; in particular $s_- \ne 0$. Using an element of $T_V \subset B_V$, we can assume that $s_- = 1$, so $s_+ + \overline{s_+} + 1 = 0$. Thus, the proof reduces to showing that $B_V$ acts transitively on the isotropic lines of the form $k(s_+ e_+ + e_0 + e_-)$. But any other such line is of the form $k((s_+ + t)e_+ + e_0 + e_-)$ for some $t \in k$ for which 
$t + \overline{t} = 0$. It then follows that the image in $G_V$ of the unitary matrix $\ds {\mtwo {1} {t} {} {1}} \in U_W \subset B_W$ transforms the isotropic line $k(s_+ e_+ + e_0 + e_-)$ into $k((s_+ + t)e_+ + e_0 + e_-)$. It follows that the an element of $H$ acts transitively on $\cS$.  
\end{proof}

\begin{rem}
Note that one can conclude from the proof that the action of $H$ on $\cS$ is not only transitive, but also faithful (i.e., $\cS$ is an $H$-torsor). In what follows, we are going to use only the transitivity.  
\end{rem}

\begin{figure}
\begin{center}
\begin{picture}(300,200)

\put(0, 0){\line(250, 0){250}}
\put(0, 0){\line(1,1){100}}
\put(250, 0){\line(1,1){100}}
\put(100,100){\line(1,0){250}}
\put(85, 115){$\cA'$}
\put(125, 145){$\cA''$}
%\put(100, 35){$\cA$}
\put(175,50){\line(1,0){100}}
\put(175,50){\line(-1,1){90}}
\multiput(175,50)(-20,0){5}{\line(-1,0){10}}
\put(25, 10){$\cB(W)$}
\put(175,50){\line(-1,3){40}}

\put(25, 110){$\cB(V)$}
\end{picture}
\end{center}
\caption{The transitive action of $H$ reduces to the case where $\cA'$ and $\cA''$ share the same intersection with $\cB(W)$. We extend this half-line to an apartment $\cA$ of 
the sub-building $\cB(W)$.}
\label{fig:apts2}
\end{figure}

\begin{proof}[Proof of Proposition~\ref{prop:orbits}]
Choose a special apartment $\cA_x$ containing $x_{V}, \pr_{W}(x_{V})$ and $x_{W}$.  Such an apartment exists thanks to Lemma~\ref{lem:maxflats} and the relative position of the buildings $\cB(V)$ and $\cB(W)$ (just choose a line that goes through the three points and intersects $\cB(W)$ in a half-line). Similarly, choose $\cA_y$ containing  $y_{V}, \pr_{W}(y_{V})$ and $y_{W}$. By Lemma~\ref{lem:transitive}, there exists an element $h \in H$ such that 
$h \cA_x = \cA_y$. Since $h$ preserves distances, it follows that $h$ transforms $x \in \Inv$ to $y \in \Inv$.  
\end{proof}

\comment{
\begin{lem}
The group $\U(W)(k_0)$ acts transitively on the set of all special Witt apartments. 
\end{lem}

\begin{proof}
Let $\cB' = \{e_+', e_0', e_-'\}$ and $\cB'' = \{e_+'', e_0'', e_-''\}$ be two special Witt bases and let $\cB = \{e_+, e_-, e_0\}$ 
be the original Witt bases (corresponding to the decomposition $V = W \perp D_\tau$). The idea is to try to express $\cB$ in terms of $\cB'$ (and similarly, in terms of $\cB''$). We then compute the change-of-basis matrix from $\cB'$ to $\cB''$ and show that it belongs to $\Hbf(k_0)$.  

\vspace{0.1in}

\noindent {\bf Expressing $\cB$ in terms of $\cB'$:} Without loss of generality, we assume that $e_0 = \beta_+ e_+' +\beta_0 e_0$. The change-of-basis matrix $S = S_{\cB' \ra \cB}$ then looks like 
$$
S = {\mthree {\alpha_+} {\beta_+} {\gamma_+} {\alpha_0} {\beta_0} {\gamma_0} {\alpha_-} {0} {\gamma_-}}. 
$$
Since $\langle e_0, e_0 \rangle = \beta_0 \beta_0^\sigma$ since $\{e_+', e_0', e_-'\}$ is a Witt basis then $\beta_0$ is a unit and changing $e_0'$ by a unit does not change the apartment, so we can assume $\beta_0 = 1$, i.e., 
$$
S = {\mthree {\alpha_+} {c} {\gamma_+} {\alpha_0} {1} {\gamma_0} {\alpha_-} {0} {\gamma_-}}.
$$ 
Similarly, note that we can modify $\alpha_-$ and $\gamma_-$ by units so that $\alpha_-$ and $\gamma_-$ are both invariant under $\sigma \in \Gal(k / k_0)$, i.e., $\alpha, \gamma \in k_0$. Using $\langle e_{\pm}, e_{0}\rangle =0$, we get that $\alpha_0 + \alpha c^\sigma = 0 = \gamma_0 + \gamma c^\sigma$, i.e., 
$$
S = {\mthree {\alpha_+} {c} {\gamma_+} {-c^\sigma \alpha} {1} {-c^\sigma \gamma} {\alpha} {0} {\gamma}}. 
$$ 
The key observation is that at least one of $\alpha$ and $\gamma$ is 0. Assume that $\alpha \ne 0 \ne \gamma$. Using that 
$e_+$ and $e_-$ are both isotropic lines, i.e., $\langle e_+, e_+ \rangle = 0 = \langle e_-, e_- \rangle$, we get 
$$
\left | 
\begin{array}{l}
\alpha_+ + (\alpha_+)^\sigma + |c|^2 \alpha = 0 \\ 
\gamma_+ + (\gamma_+)^\sigma + |c|^2 \gamma = 0 \\ 
\gamma (\alpha_+)^\sigma + |c|^2 \alpha \gamma + \alpha \gamma_+ = 1 \\ 
\gamma \alpha_+ + |c|^2 \alpha \gamma + \alpha (\gamma_+)^\sigma = 1
\end{array}
\right .  
$$
Yet, the last system is incompatible (indeed, adding the last two equations and using the first two, we get that the left-hand side is 0 whereas the right-hand side is 2). Suppose without loss of generality that $\alpha = 0$. 

\dimnote{Now we have to use the condition that the other half of the line is NOT contained in $\cB(W)$.}

\dimnote{The idea is that $S$ is more or less an upper-triangular unipotent matrix in $\U(V)$.}

\dimnote{We may need to assume that the intersections of the two apartments with the building $\cB(W)$ is the same (by using an element of $\Hbf(k_0)$). Thus, we are left with two unipotent matrices $S' = {\mthree {1} {r} {s'} {0} {1} {-\overline{r}} {0} {0} {1}}$ and $S'' = {\mthree {1} {r} {s''} {0} {1} {-\overline{r}} {0} {0} {1}}$. Here, one should check that the intersection being the same forces $r' = r'' =r$. Clearly, $S'(S'')^{-1} \in \Hbf(k_0)$ which completes the proof.}

\end{proof}
}

%\begin{itemize}
%\item Let $\cB'$ and $\cB''$ be two distinct special Witt bases. One can clearly modify $\cB'$ so that the 
%intersections $\cB' \cap \cB(W) = \cB'' \cap \cB(W)$. We want to show that $\cB''$ can be transformed to 
%$\cB'$ via an element of $\U(W)$.  
%\end{itemize}

%
% Computing the Hecke Polynomial 
%
\section{Computing the Hecke Polynomial}\label{sec:heckepol}
\setcounter{paragraph}{0}

We retain the notation from Section~\ref{sec:galois-cycles}. In addition, choose a Witt $k$-basis $\{e_+, e_0, e_-\}$ of $V$ adapted to $K_V$ such that $\{e_+, e_-\}$ is a Witt $k$-basis of $W$ adapted to $K_W$. Let $B_V \subset G_V$ be the Borel subgroup that is the stabilizer of the maximal isotropic flag $k e_+$ of $V$ and let $B_W \subset G_W$ be the Borel subgroup that is the stabilizer of the maximal isotropic flag $ke_+$ of $W$. Let $U_V$ (resp., $U_W$) be the unipotent radical of $B_V$ (resp. $B_W$) and let $T_V$ (resp., $T_W$) be the corresponding Levi subgroups. 

Let $\cH = \cH(G, K)$ be the local Hecke algebra. Let $\delta_V = \diag(\varpi, 1, \varpi^{-1})$ and $\delta_W = \diag(\varpi, \varpi^{-1})$. Let $\mu$ be the conjugacy class of co-characters of $\Glochat$ 
determined by the Shimura datum. Blasius and Rogawski \cite[\S 6]{blasius-rogawski:zeta} associate to $\mu$ a polynomial $H_\tau(z)$ with coefficients in $\cH$ and conjecture that it vanishes on the geometric Frobenius acting on the $\ell$-adic \'etale cohomology of the corresponding Shimura variety (providing an analogue of the classical Eichler--Shimura relation). We now compute 
the polynomial for the Shimura variety $\Sh_K(\G, X)$. Given an element $g \in G$, let $\mathbf{1}_{KgK}$ be the characteristic function of the double coset $K g K$ viewed as an element of the local Hecke algebra $\cH$.

\begin{thm}\label{thm:hecke}
The Hecke polynomial $H_\tau(z) \in \cH[z]$ at the place $\tau$ for the Shimura datum $(\G, X)$ defined in Section~\ref{subsec:shimvar} is given by 
\begin{equation}
H_\tau(z) = H^{(2)}(z) H^{(4)}(z), 
\end{equation}
where 
$$
H^{(2)}(z) = z^2 - \qtau^2(\mathbf{1}_{\Ktau(1, \delta_W)\Ktau} - (\qtau-1) \mathbf{1}_{\Ktau}) z + \qtau^6 \in \cH[z],  
$$ 
and 
$$
H^{(4)}(z) = z^4 + d_1 z^3 + d_2 z^2 + d_3 z + d_4 \in \cH[z]. 
$$
Here, 
\begin{eqnarray*}
d_1 &=& -\mathbf{1}_{\Ktau (\tVtau, \tWtau) \Ktau} + (\qtau-1)( \mathbf{1}_{\Ktau (\tVtau, 1) \Ktau} + (q-1)\mathbf{1}_{\Ktau (1, \tWtau) \Ktau}) - (q-1)^2, \\ 
d_2 &=& \qtau^2 \mathbf{1}_{\Ktau (\tVtau^2, 1)\Ktau} + \qtau^4 \mathbf{1}_{\Ktau (1, \tWtau^2)\Ktau} - 2\qtau^2(\qtau-1) \mathbf{1}_{\Ktau (\tVtau, 1)\Ktau} -2 \qtau^4(\qtau -1) \mathbf{1}_{\Ktau (1, \tWtau)\Ktau} - q^2(q^2+1)(q-1)^2, \\
d_3 &=& \qtau^6 (  -\mathbf{1}_{\Ktau (\tVtau, \tWtau) \Ktau} + (\qtau-1)( \mathbf{1}_{\Ktau (\tVtau, 1) \Ktau} + \mathbf{1}_{\Ktau (1, \tWtau) \Ktau}) - (\qtau-1)^2), \\ 
d_4 &=& \qtau^{12}.  
\end{eqnarray*}

\end{thm}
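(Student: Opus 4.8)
The plan is to apply the recipe of Blasius and Rogawski \cite[\S 6]{blasius-rogawski:zeta}, recalled in Section~\ref{subsec:comphecke}: $H_\tau(z)$ is the characteristic polynomial $\det\big(z\cdot\mathrm{Id}-q_\tau^{\langle\rho,\mu_h\rangle}\Phi_\tau\big)$ of the normalised Frobenius--Satake element $\Phi_\tau$ acting on the minuscule representation attached to the Shimura datum, its coefficients --- a priori Weyl-invariant functions of the Satake parameter --- being reinterpreted as elements of the Hecke algebra $\cH$ via the inverse Satake isomorphism for the quasi-split local group $G=G_V\times G_W$. By the computation of the datum in Section~\ref{subsec:shimvar}, for $n=3$ the relevant representation is $r_3\boxtimes r_2$ of $\widehat{\G_V}\times\widehat{\G_W}=\GL_3(\C)\times\GL_2(\C)$, of dimension $6$, and $\langle\rho,\mu_h\rangle=\tfrac12\dim\Sh_K(\G,X)=\tfrac32$; since $q_\tau=|k(\tau)|=q^{2}$ the normalising factor is $q_\tau^{3/2}=q^{3}$. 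Thus $H_\tau(z)$ will be obtained as the characteristic polynomial of $q^{3}\Phi_\tau$ on the $6$-dimensional space $r_3\boxtimes r_2$.

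First I would make $\Phi_\tau$ explicit. Because $\tau$ is inert and $E_\tau/F_\tau$ is unramified of degree $2$, the geometric Frobenius $\Fr_\tau$ at the place $\tau$ of the reflex field $E$ is $\Fr_{F_\tau}^{2}$; hence if an unramified $L$-parameter of $G_V$ (resp.\ $G_W$) sends $\Fr_{F_\tau}$ to $t_V\rtimes\sigma$ (resp.\ $t_W\rtimes\sigma$), with $\sigma$ the involution of the dual torus computed in \ref{par:galact} ($\sigma\mu_i=-\mu_i$), then $\Phi_\tau$ is the image in $r_3\boxtimes r_2$ of the pair $\big(t_V\sigma(t_V),\,t_W\sigma(t_W)\big)$, which lies in $\GL_3(\C)\times\GL_2(\C)$ with its ``middle'' torus coordinate forced to $1$. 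Writing $X$ and $Y$ for the resulting single unramified parameters of the rank-one groups $G_V$ and $G_W$, the six eigenvalues of $q^{3}\Phi_\tau$ on $r_3\boxtimes r_2$ are then $q^{3}X^{a}Y^{b}$ with $a\in\{1,0,-1\}$ and $b\in\{1,-1\}$, so $H_\tau(z)=\prod_{a,b}\big(z-q^{3}X^{a}Y^{b}\big)$ visibly factors as $H^{(2)}(z)\,H^{(4)}(z)$: $H^{(2)}$ collects the two roots with $a=0$ (product $q^{6}$, sum $q^{3}(Y+Y^{-1})$) and $H^{(4)}$ the remaining four (product $q^{12}$). This is the conceptual content, and it is an avatar of the fact that $G$ has split rank only $1+1$, so $\Phi_\tau$ is far from regular; the element $\big(t_V\sigma(t_V),t_W\sigma(t_W)\big)$ is the unramified base change of the pair of parameters to $\GL_3(E_\tau)\times\GL_2(E_\tau)$, and at this point the analysis coincides with Koskivirta's for the Picard datum.

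It remains to rewrite the symmetric functions of the roots as honest elements of $\cH$, and here the method of Cornut and Koskivirta \cite{koskivirta:congruence,koskivirta:thesis} enters: the Satake transform of each basic double-coset operator $\mathbf{1}_{K(\delta_V^{i},\delta_W^{j})K}$ is computed by retracting the buildings $\cB(V_\tau)$ and $\cB(W_\tau)$ onto a fixed apartment along a chamber at infinity (the canonical retraction map) and counting the fibres, which combinatorially amounts to counting lattices, i.e.\ isotropic lines in Hermitian spaces over the residue field $\F_{q^{2}}$ --- the source of the numbers $q^{3}+1$ and $q+1$ in Figure~\ref{fig:buildings}. For the $\U(1,1)$-factor this is an elementary count on the tree $\cB(W_\tau)$ and gives $\mathcal{S}(\mathbf{1}_{K(1,\delta_W)K})=q(Y+Y^{-1})+(q-1)$, whence $H^{(2)}(z)=z^{2}-q^{2}\big(\mathbf{1}_{K(1,\delta_W)K}-(q-1)\mathbf{1}_K\big)z+q^{6}$ as stated. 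For $H^{(4)}$ I would expand the elementary symmetric functions $e_k\big(\{X^{\pm1}Y^{\pm1}\}\big)$ in the basis $1$, $X+X^{-1}$, $X^{2}+X^{-2}$, $Y+Y^{-1}$, $Y^{2}+Y^{-2}$ and their products, substitute the inverse Satake expressions on the $\U(2,1)$- and $\U(1,1)$-sides (the transforms of $\mathbf{1}_{K(\delta_V,1)K}$, $\mathbf{1}_{K(\delta_V^{2},1)K}$, $\mathbf{1}_{K(1,\delta_W)K}$, $\mathbf{1}_{K(1,\delta_W^{2})K}$ and $\mathbf{1}_{K(\delta_V,\delta_W)K}$), and read off $d_1,\dots,d_4$, finishing with the check that the powers of $q$ recombine to give coefficients in the integral Hecke algebra.

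The hard part will be this last computation on the $\U(2,1)$-side: the double cosets $K(\delta_V,1)K$ and $K(\delta_V^{2},1)K$, and their products with the $\U(1,1)$-cosets, break into many single cosets, and tracking the resulting powers of $q$ through the retraction counts on the biregular tree $\cB(V_\tau)$ --- together with pinning down the Blasius--Rogawski normalisation (the exponent $q_\tau^{\langle\rho,\mu_h\rangle}$ and the geometric-Frobenius convention) exactly --- is the delicate bookkeeping. Everything downstream of a correct Satake dictionary is routine polynomial algebra.
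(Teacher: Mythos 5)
Your proposal follows essentially the same route as the paper: the Blasius--Rogawski determinant of the $\sigma$-twisted element acting on $r_3\boxtimes r_2$, restricted to the dual torus with the normalisation $q^3$, the factorisation of the six roots $q^3X^aY^b$ ($a\in\{1,0,-1\}$, $b\in\{\pm1\}$) into the degree-$2$ and degree-$4$ pieces, and inversion of the twisted Satake transform by the Cornut--Koskivirta retraction counts on the two trees. The only cosmetic difference is that the paper computes just the two basic transforms of $\mathbf{1}_{K(\delta_V,1)K}$ and $\mathbf{1}_{K(1,\delta_W)K}$ and handles the squares and the mixed term via multiplicativity of the Satake homomorphism, rather than retracting $\mathbf{1}_{K(\delta_V^2,1)K}$, $\mathbf{1}_{K(1,\delta_W^2)K}$ and $\mathbf{1}_{K(\delta_V,\delta_W)K}$ separately as you propose.
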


\subsection{Unramified Local Langlands Correspondence}
We state the conjecture for $G_V$ (it is similar for $G_W$). 

\paragraph{Unramified local parameters.}
The action of the Weil group $W_{k_0}$ on $\widehat{G_V}$ is explained in \cite[\S 1.6]{blasius-rogawski:zeta} and in our case, factors through the projection $W_{k_0} \ra \Gal(k / k_0)$. Let $\leftexp{L}{G_V} = \widehat{G_V} \rtimes W_{k_0}$ be the $L$-group of $G_V$. Let $\Phi \in W_{\Ftau}$ be the Frobenius automorphism and let $v \colon W_{\Ftau} \ra \Z$ be the map that sends an element $w \in W_{k_0}$ to the unique exponent $n$ such that $w$ induces the automorphism $\Phi^n$ when restricted to the residue field of $k_0$. We then have an exact sequence 
$$
0 \ra I \ra W_{k_0} \xra{v} \Z \ra 0, 
$$
where $I \subset W_{k_0}$ is the inertia group. Recall \cite[\S 1.10]{blasius-rogawski:zeta} that a \emph{local parameter}  is a homomorphism  
\begin{equation}
\phi \colon W_{k_0} \times \SU_2(\R) \ra \leftexp{L}{G_V}, 
\end{equation}
such that the composition of $\phi$ with the projection to $\leftexp{L}{\Gtau} \ra W_{\Ftau}$ is the identity and 
$\phi(w)$ is semisimple for all $w \in W_{\Ftau}$. Two parameters $\phi_1$ and $\phi_2$ are equivalent if they are conjugated by an element $g \in \widehat{\Gtau}$. 

To introduce unramified local parameters, note that since $G_V$ is unramified (i.e., $G_V$ is quasi-split over $k_0$ and splits over the unramified extension $k$), the action of $W_{k_0}$ on $\widehat{G_V}$ factors through the map $W_{k_0} \xra{v} \Z$ (equivalently, the inertia group acts trivially), i.e., $\widehat{G_V} \rtimes \Z$ is defined and we have a map $\widehat{G_V} \rtimes W_{k_0} \ra \widehat{G_V} \rtimes \Z$. A local parameter $\phi$ is \emph{unramified} if the following two properties are satisfied: 
\begin{enumerate}
\item $\phi$ is trivial on $\SU_2(\R)$, 
\item The composition $W_{k_0} \xra{\phi} \widehat{G_V} \rtimes W_{k_0} \ra \widehat{G_V} \rtimes \Z$ factors through $v \colon W_{k_0} \ra \Z$ (i.e., the inertia group is in the kernel of the composition).  
\end{enumerate}

Let $\Phi_{\ur}(G_V)$ be the set of equivalence classes of unramified local $L$-parameters. Since an unramified local parameter 
$\phi$ is uniquely determined by the semi-simple element $\phi(\Phi) = g \rtimes \Phi$ then the set $\Phi_{\ur}(G_V)$ of equivalence 
classes of unramified local parameters is in bijection with $\widehat{G_V}$-orbits of semisimple elements $g \rtimes \Phi \in \leftexp{L}{G_V}$. As we will see, the latter are easier to describe for the maximal torus $T_V$. 

\paragraph{Unramified representations and unramified local parameters.}
Let $K_V \subset G_V$ be a fixed hyperspecial maximal compact subgroup. An irreducible and admissible representation $\pi$ of $G_V$ is called \emph{unramified} if $\pi^{K_V} \ne 0$. Let $\Pi_{\ur}(G_V)$ be the set 
of isomorphism classes of unramified representations of $G_V$. Following \cite[Prop.1.12.1]{blasius-rogawski:zeta}, there is a natural bijection between $\Phi_{\ur}(G_V)$ and $\Pi_{\ur}(G_V)$ that we now explain. 
First, it follows from \cite[p.535]{blasius-rogawski:zeta} that there are canonical isomorphisms 
$\Phi_{\ur}(G_V) \isom \Phi_{\ur}(T_V) / \Omega(T_V)$ and $\Pi_{\ur}(G_V) \isom \Pi_{\ur}(T_V) / \Omega(T_V)$ where 
$\Omega(T_V) = N_{G_V}(T_V) / T_V$ is the Weyl group. This reduces the problem of relating unramified representations to unramified local parameters from $G_V$ to the maximal torus $T_V$. Let $S_V \subset T_V$ be the the maximal split (over $k_0$) subtorus of $T_V$. It is proved in \cite[p.534]{blasius-rogawski:zeta} that  
\begin{equation}\label{eq:unramloclang}
\Pi_{\ur}(T_V) \simeq \widehat{S_V} \simeq \Phi_{\ur}(T_V). 
\end{equation}

\comment{
Indeed, 
\begin{enumerate}
\item A character $\chi \colon T_V \ra \C^\times$ is unramified if and only if $T_{V, c} \in \ker \chi$ where $T_{V, c} = T_V \cap K_V$. Thus, 
we get 
$$
\Pi_{\ur}(\Ttau) = \Hom(\Ttau / \Ttauc, \C^\times). 
$$

\item There is a map $\vphi \colon \Ttau \ra \Hom(X^*(\Ttau), \Z)$ given by $t \mapsto \{\chi \mapsto v(\chi(t))\}$. Clearly, 
$\ker(\vphi) = \Ttauc$, and $\text{im}(\vphi) = \Hom(X^*(\Ttau), \Z)^{\sigma}$, i.e., we have an isomorphism 
$$
\vphi \colon \Ttau / \Ttauc \ra \Hom(X^*(\Ttau), \Z)^{\sigma} = X_*(\Ttau)^{\sigma} = X_*(\Stau). 
$$
Hence, $\ds \Pi_{\ur}(\Ttau) \isom \Hom(X_*(\Stau), \C^\times) = \widehat{\Stau}$. 

\item Since $\Phi_{\ur}(T)$ is in bijection with the orbits of $\Ttauhat \rtimes \Phi$ under the adjoint action of $\Ttauhat$. 
The latter are easier to describe since $t_1 \rtimes \Phi$ and $t_2 \rtimes \Phi$ are equivalent if and only if there exists 
$t \in \Ttauhat$ such that 
$$
t_2 \rtimes \Phi = (t \rtimes 1)^{-1} (t_1 \rtimes \Phi) (t \rtimes 1) = \Phi(t) t^{-1} t_1 \rtimes \Phi.  
$$
This means that the $\Ttauhat$-orbits are in bijection with $\Ttauhat / \Ttauhat^{1 - \Phi}$. Yet, the latter is isomorphic to $\Stauhat$
since $X_*(S) \hra X_*(T)$, $X_*(S)  = X_*(T)^{\Phi = 1}$ and $\widehat{T} \isom \Hom(X_*(T), \C^\times)$. 
\end{enumerate}
}

\paragraph{Satake parameters.} In the case of $G_V \times G_W$, the maximal split torus $S = S_V \times S_W$ has dimension $2$ since 
the maximal split tori $S_V$ and $S_W$ of $G_V$ and $G_W$, respectively, are both 1-dimensional. If $\{\alpha, \beta\}$ is the basis for $X_*(S)$ consisting 
of the cocharacters $\alpha(\varpi) \ra \diag(\varpi, 1, \varpi^{-1})$ and $\beta(\varpi) = \diag(\varpi, \varpi^{-1})$ 
then we can identify $\widehat{S} \isom \Hom(X_*(S), \C^\times) \isom (\C^*)^2$. 
Indeed, let 
$$
t_{a, b} = (\diag(\varpi^{a}, 1, \varpi^{-a}), \diag(\varpi^{b}, \varpi^{-b})). 
$$ 
Let $s \colon X_*(S) \ra \C^\times$ be a homomorphism and let $(u, v)$ be 
the images of $(\alpha, \beta)$ in $(\C^\times)^2$. If $\pi(s)$ is the unramified representation corresponding to $s$ under \eqref{eq:unramloclang} (we apply for both $V$ and $W$ and write it on the product group) then $\pi(s)(t_{a, b}) = u^{a} v^b$ determines completely the representation $\pi(s)$. Here, the complex numbers $(u, v) \in (\C^\times)^2$ are known as the Satake parameters of $\pi(s)$.

%\subsection{Blasius--Rogawski Conjecture}\label{subsec:congruence}

\comment{ 
\subsection{The Satake Isomorphism}
Satake \cite{satake:spherical} showed that there is an isomorphism  $\cH \isom \isom \cH(T, T_c)^{\Omega(T)}$
where $T_c = T \cap K$ and $\Omega(T) = N_G(T) / T$ is the Weyl group. \dimnote{Finish.}

\paragraph{The Satake Isomorphism.}

By Wedhorn \cite[Prop.1.9]{wedhorn:congruence}, one has the following commutative diagram: 
$$
\xymatrix{
\cH(G, K) \ar@{^{(}->}[r]^{|_{\Bbf}} \ar[d]^{\isom}&   \cH(B, L)   \ar@{^{(}->}[r]^{\cS}& \cH(T, T_c) \ar[r]^{|\delta|^{1/2}} &  \cH(T, T_c) \ar[d]^{\isom} \\ 
\C[\Pi_{\ur}(G)] \ar[r]^{\isom} & \C[\Pi_{\ur}(T)]^{\Omega(T)} \ar@{^{(}->}[rr] && \C[\Pi_{\ur}(T)], 
}
$$
where 
\begin{itemize}
\item $L = B \cap K$ and $|_{B} \colon \cH(G, K) \ra \cH(B, L)$ is the restriction of functions, 
\item the map $\cS \colon \cH(B, L) \ra \cH(T, T_c)$ is obtained by taking quotients by the unipotent radical, i.e., for 
$g \in T$, 
$$
\cS(\mathbf{1}_{L g L })  = \frac{1}{|L \cap gL g^{-1}|} \mathbf{1}_{g T_c}
$$

\item $\delta$ is the sum of the simple positive roots and $|\cdot|$ is normalized so that $|\ell| = \ell^{-2}$.           
\end{itemize}

\paragraph{Calculating the homomorphism $\cH(G, K) \ra \cH(B, L)$} 
We start from the following commutative diagram: 
$$
\xymatrix{
\cH(G, K) \ar@{^{(}->}[r]^{\widehat{}} \ar@{->}[d]^{|_{B}}& \cH(T, T_c) \\ 
\cH(B, L) \ar@{->}[r]^{\mathcal S}  &  \cH(T, T_c) \ar@{->}[u]_{|\delta|^{1/2}}
}
$$

\noindent Here, 

\begin{itemize} 
\item The character $\delta$ of $T$ is obtained by the action of $T$ on the Lie algebra of the 
unipotent radical $U$ of $B$ 
%\dimnote{Be specific - positive roots of what?}

\item We thus obtain (if $g = t_{a_1, \dots, a_n, b_1, \dots, b_n}$)  
\begin{eqnarray*}
\widehat{\mathbf{1}_{K g  K}} = \sum_{(a'_1, \dots, a'_n, b'_1, \dots, b_n') \preceq (a_1, \dots, a_n, b_1, \dots, b_n)}  \frac{ \left | \delta(t_{a'_1, \dots, a'_n, b'_1, \dots, b_n'}) \right |^{1/2}}{|L \cap t_{a'_1, \dots, a'_n, b'_1, \dots, b_n'} L t_{a'_1, \dots, a'_n, b'_1, \dots, b_n'}^{-1}|} \mathbf{1}_{t_{a'_1, \dots, a'_n, b'_1, \dots, b_n'} \Tbf_c}
\end{eqnarray*}

\item I think (I should double check here) that the modular character $\delta$ corresponds to the pair of characters $(\chi_n - \chi_{-n}, \mu_n - \mu_{-n})$ and since $|\ell| = 1 / \ell^2$ then we get 
$\delta(t_{a_1', \dots, a_n', b_1', \dots, b_n'}) = \ell^{2a_n + 2b_n}$.  

\item We then develop reduction theory in order to compute the index $[L : A]$ where 
$$
A = t_{a_1', \dots, a_n', b_1', \dots, b_n'} L t_{a_1', \dots, a_n', b_1', \dots, b_n'}^{-1} \cap L. 
$$ 
This requires to write down $L$ as a disjoint union of classes of the form $B A$.
\end{itemize}
}

\subsection{Computing the Hecke Polynomial}\label{subsec:comphecke}
We now recall the definition of the polynomial $H_\tau(z)$ that appears in the Blasius--Rogawski congruence relation 
(Theorem~\ref{thm:congrel}) and compute it in our setting. More precisely, we show the first part of 
the computation in the more general case when $\dim V = n$ and $\dim W = n-1$ and then specialize 
to the case $n = 3$ in the final part.

\paragraph{Hecke polynomials and the congruence relation.} Let $r \colon \Glochat \ra \GL(V)$ be the complex representation of $\Glochat$ of highest weight the cocharacter $\mu$ of the Shimura datum $(\G, X)$. Following Blasius and Rogawski \cite[\S 6]{blasius-rogawski:zeta} (see also the comment in the introduction), we associate to the fixed finite place $\tau$ a polynomial (Hecke polynomial) defined as follows:  
\begin{equation}\label{eq:heckepoldef}
H_{\tau}(z) = \det \left ( z - q^{\dim X} r( g^{\sigma} g) \right ) \in \cH[z].   
\end{equation}
Here, the Hecke algebra 
$\cH = \cH(G, K)$ is identified with the functions on $\Glochat$ invariant under $\sigma$-conjugation, i.e., the automorphism of $\Glochat$ given by $y \mapsto g^\sigma y g^{-1}$. When restricted to the maximal torus $\Tlochat$ of $\Glochat$, the Satake isomorphism identifies the Hecke algebra with the space of functions on $\Tlochat$ that are invariant under both $\sigma$-conjugation and the Weyl group $\Omega(T)$. The strategy to compute the polynomial is then to restrict to the maximal torus $\Tlochat$ where the above determinant can easily be evaluated and then to invert the Satake isomorphism. 

%The congruence relation of Blasius--Rogawski generalizes the classical Eichler--Shimura relation: given $\pi_\tau \in \Pi_{\ur}(\G)$, let $H_{\tau}(z, \pi_\tau)$ be the specialisation of the polynomial $H_{\tau}(x)$ to the representation $\pi_\tau$. 

%\begin{conj}[(Blasius--Rogawski)]
%Let $\pi_f$ be a cohomological automorphic representation occurring in the $\ell$-adic \'stale cohomology of $\Sh_K(\G, X)$ for $\tau \nmid \ell$. Assume that $\pi_\tau$ is unramified at $\tau$.  Then 
%$H_\tau(\Phi_\tau, \pi_\tau)$ vanishes when restricted to $\H^*(\Sh_K(\G, X) \otimes \Qbar, \Q_\ell)$. 
%\end{conj}

\paragraph{The representation $r \colon \Glochat \ra \GL_{n(n-1)}(\C)$.}
Here, we make explicit the computation of the Hecke polynomial $H_\tau(z)$. 
The Hermitian symmetric domain $X$ for the Shimura datum $(\G, X)$ has dimension $\dim X = 2n-3$. 
The associated co-character $\widehat{\mu_h}$ of $\widehat{G}$ can be determined as follows: the Hermitian symmetric domain $X_V$
is the conjugacy class of the embedding $h_V \colon \mathbf{S} \ra \G_{V, \R}$ given by $(z, \overline{z}) \mapsto \diag(1, \dots, 1, \overline{z} / z)$. The complexification $h_{V, \C} \colon \mathbf{S}_{\C} \ra \G_{V, \C}$ is given by  
$(z_1, z_2) \mapsto \diag(1, \dots, 1, z_2 / z_1)$, i.e., the associated co-character $\mu_V$ of the Shimura datum $(\G_V, X_V)$ is $\lambda \mapsto (1, \dots, 1, \lambda^{-1})$ which corresponds to the character $-\chi_n$ of the dual group $\GL_n(\C)$. The representation of $r_V \colon \GL_n(\C) \ra \GL_n(\C)$ of highest weight $-\chi_n$ is precisely the dual of the standard representation, namely, $A_V \mapsto \leftexp{t}{A_V}^{-1}$ for $A_V \in \GL_n(\C)$. Similarly, the associated co-character $\mu_W$ of $(\G_W, X_W)$ is the character $-\lambda_{n-1}$ of $\GL_{n-1}(\C)$, so the representation $r_W$ is the representation $A_W \mapsto \leftexp{t}{A_W}^{-1}$ for $A_W \in \GL_{n-1}(\C)$. The representation $r \colon \widehat{G} \ra \GL_{n(n-1)}(\C)$ associated to $(\G, X)$ is then an $n(n-1)$-dimensional representation 
that is the tensor product of the two representations $r_V$ and $r_W$ of $\widehat{G_V}$ and $\widehat{G_W}$, respectively, i.e., it is the representation $r \colon \widehat{G} \ra \GL(V \otimes W)$ given by $(A_V, A_W) \mapsto \leftexp{t}{A_V}^{-1} \otimes \leftexp{t}{A_W}^{-1}$. 

\paragraph{Galois action on $\widehat{G}$.} The action of $\Gal(k / k_0)$ on $\widehat{G}$ can be calculated following 
\cite[\S 1.6]{blasius-rogawski:zeta}. Indeed, let $(B, T)$ be the Borel pair and consider the standard splitting for $G$, namely: 
\begin{itemize}
\item $\Btauhat = \Btauhat_V \times \Btauhat_W$ is the product of the upper-triangular Borel subgroups, 
\item $\Ttauhat = \Ttauhat_V \times \Ttauhat_W$ is the product of the diagonal tori, 
\item $\{X_\alpha\}$ is the set of matrices $(a_{ij})_{i,j=1}^n$ where $a_{ij} = \delta_{i k} \delta_{k+1 j}$ for $k=1, \dots, n-1$, 
\item $\{Y_\beta\}$ is the set of matrices $(b_{ij})_{i,j=1}^{n-1}$ where $b_{ij} = \delta_{ik} \delta_{k+1j}$ for $k = 1, \dots, n-2$. 
\end{itemize}
According to \cite[\S 1.8(b)]{blasius-rogawski:zeta}, if 
\[
J_n = \left ( 
\begin{matrix}
{} & {} & {} & {} & {(-1)^{1-1}} \\ 
{} & {} & {} & {\Ddots} & {} \\  
{} & {} & {(-1)^{i-1}} & {} & {} \\ 
{} & {\Ddots} & {} & {} & {} \\
{(-1)^{n-1}} & {} & {} & {} & {} 
\end{matrix}
\right )
\]
then the automorphism 
$
A_V \mapsto J_n \leftexp{t}{A_V}^{-1} J_n
$
is the unique non-inner automorphism of $\widehat{G_V} = \GL_3(\C)$ that fixes the standard splitting $(B_V, T_V, \{X_\alpha\})$. Similarly, 
$
A_W \mapsto J_{n-1} \leftexp{t}{A_W}^{-1} J_{n-1}
$
is the unique non-inner automorphism of $\widehat{G_W} = \GL_2(\C)$ that fixes the standard splitting $(B_W, T_W, \{Y_\beta\})$. 

A Borel pair $(B, T)$ for $G$ gives rise to a reduced based root datum 
$$
\Psi(B, T) = (X^*(T), \Delta^*, X_*(T), \Delta_*), 
$$
where $\Delta^* \subset X^*(T)$ is the set of simple positive roots and $\Delta_* \subset X_*(T)$ is the 
set of co-roots associated to $\Delta^*$. All Borel pairs are conjugate under the action of the 
adjoint quotient $G^{\ad}$ and if $(B', T')$ and $(B, T)$ are two Borel pairs that are conjugate under $g \in G^{\ad}$ (i.e., $\ad(g)$ sends $(B', T')$ to $(B, T)$) then $\ad(g)$ is independent of $G$, i.e., there is a canonical isomorphism between $(B', T')$ and $(B, T)$. This means that all based root data $\Psi(B, T)$ are canonically identified and hence, we can write the datum obtained via these canonical identifications by $\Psi(G)$.

One calls the data $(B, T, \{(X_\alpha, Y_\beta)\})$ a splitting because it splits the exact sequence 
$$
1 \ra G^{\ad} \ra \Aut(\widehat{G}) \ra \Aut(\Psi(\widehat{G})) \ra 1.
$$

The dual group $\widehat{G}$ comes with an isomorphism $\Psi(\widehat{G}) = \Psi(G)^\vee$ where 
$\Psi(G)^\vee$ is the isomorphism class of all $(X_*(T), \Delta_*, X^*(T), \Delta^*)$ (i.e., the roots and the co-roots are switched). 

We obtain the action of $\Gal(\Qbar / \Q)$ on $\widehat{G}$ by lifting the natural action of $\Gal(\Qbar / \Q)$ on $\Psi(G)^\vee$ to the unique automorphism $\Aut(\widehat{G})$ that fixes the chosen splitting. 
More explicitly, $\sigma \in \Gal(\Qbar / \Q)$ acts on $\widehat{G}$ via
$
\leftexp{\sigma}{(A_V, A_W)} = (J_n \leftexp{t}{A_V}^{-1} J_n, J_{n-1} \leftexp{t}{A_W}^{-1} J_{n-1}). 
$ The Hecke polynomial for the Shimura datum $(\G, X)$ is 
\begin{equation}\label{eq:hecke1}
H_\tau(z) = \det(z - q^{2n-3} r(g \leftexp{\sigma}{g})) = \det(z - q^{2n-3} r((A_V, A_W) \cdot \leftexp{\sigma}(A_V, A_W))), 
\end{equation}
where $g = (A_V, A_W) \in \widehat{G}$ with $A_V \in \GL_n(\C)$ and $A_W \in \GL_{n-1}(\C)$. Let 
$$
B := r((A_V, A_W) \cdot \leftexp{\sigma}(A_V, A_W)) \in \GL_{n(n-1)}(\C).
$$ 
Then (see~\cite[p.12]{gross:satake}) 
\begin{equation}
H_\tau(z) = \sum_{i = 0}^{n(n-1)} (-1)^i \Tr \left (\bigwedge^i B \right ) z^{n(n-1) - i}. 
\end{equation}
where $A = A_V \otimes A_W$ for $(A_V, A_W) \in \widehat{G} = \widehat{G_V} \times \widehat{G_W}$. Here, the coefficients of the polynomial are viewed as functions on $\widehat{G}$. Restricted to the dual torus $\widehat{T}$, let 
$$
A_V = \diag(x_1, \dots, x_n) \qquad \text{and} \qquad A_W = \diag(y_1, \dots, y_{n-1}).
$$ 
Then 
$$
(A_V, A_W) \cdot \leftexp{\sigma}(A_V, A_W) = \left ( \diag \left ( \frac{x_1}{x_n}, \dots, \frac{x_n}{x_1}\right ), 
\diag \left ( \frac{y_1}{y_{n-1}}, \dots, \frac{y_{n-1}}{y_1}\right ) \right ). 
$$
In this case, \eqref{eq:hecke1} turns into 
\begin{equation}\label{eq:hecke2}
H_\tau(z) = \prod_{i = 1}^n \prod_{j=1}^{n-1} \left ( z - q^{2n-3} \frac{x_{n+1-i}}{x_i} \frac{y_{n-j}}{y_j}\right ).  
\end{equation}
This polynomial is invariant under $\sigma$-conjugation as well as under the Weyl group $\Omega(T)$. 
When $n = 3$, we rewrite \eqref{eq:hecke2} as 
%\begin{eqnarray*}
%H_\ell(T) &=& \left ( T^2 - \ell^3 \frac{x_3}{x_1} \left ( \frac{y_2}{y_1} + \frac{y_1}{y_2} \right ) T + \ell^6 \left ( \frac{x_3}{x_1} \right)^2 \right ) \times \\
%&& \left ( T^2 - \ell^3 \left ( \frac{y_2}{y_1} + \frac{y_1}{y_2} \right ) T+ \ell^6 \right ) \times \\ 
%&& \left ( T^2 - \ell^3 \frac{x_1}{x_3} \left ( \frac{y_2}{y_1} + \frac{y_1}{y_2} \right ) T + \ell^6 \left ( \frac{x_1}{x_3} \right)^2 \right ). 
%end{eqnarray*}
%We can write it in yet another way that resembles more the Hecke polynomial in \cite{koskivirta:congruence} 
\begin{eqnarray*}
H_\tau(z) &=& \left ( z - q^3 \frac{y_2}{y_1}\right) \left (z^2 - q^3  \left ( \frac{x_3}{x_1} + \frac{x_1}{x_3}\right ) \frac{y_2}{y_1} z + q^6 \left ( \frac{y_2}{y_1}\right )^2 \right ) \times \\ 
&&  \left ( z - q^3 \frac{y_1}{y_2}\right) \left (z^2 - q^3  \left ( \frac{x_3}{x_1} + \frac{x_1}{x_3}\right ) \frac{y_1}{y_2} z+ q^6 \left ( \frac{y_1}{y_2}\right )^2 \right ). 
\end{eqnarray*}

\paragraph{The coefficients of $H_\tau(z)$ as elements of $\cH(T, T_c)^{\Omega(T)}$.} The above representation yields a factorization into two polynomials (one of degree 2 and one of degree 4) whose coefficients are functions that are invariant under $\sigma$-conjugation and under the Weyl group $\Omega(T)$. Indeed, 
let $\delta_V = \diag(\varpi, 1, \varpi^{-1})$, $\delta_W = \diag(\varpi, \varpi^{-1})$ and let 
\begin{equation}\label{eq:soper}
s_{0, 1} = \mathbf{1}_{(1, \delta_W)T_c} + \mathbf{1}_{(1, \delta_W^{-1})T_c} \in \cH(T, T_c), \qquad 
s_{1,0} =  \mathbf{1}_{(\delta_V,1)T_c} + \mathbf{1}_{(\delta_V^{-1}, 1)T_c} \in \cH(T, T_c). 
\end{equation}
The Hecke polynomial can then be written as follows: 
\begin{equation}\label{eq:hecke-torus}
H_\tau(z) = \underbrace{(z^2  -q^3 s_{0, 1} z + q^6)}_{H^{(2)}(z)} \underbrace{(z^4 -q^3 s_{0,1}s_{1,0} z^3 + q^6 \left (  s_{0,1}^2 + s_{1, 0}^2 - 2 \right ) z^2 + -q^9 s_{0,1} s_{1, 0} z + q^{12})}_{H^{(4)}(z)},   
\end{equation}
viewed as a polynomial in $\cH(T, T_c)[z]$. We now need to obtain the polynomial with coefficients in the original Hecke algebra $\cH(G, K)$ by inverting the Satake transform. 

\subsection{The Satake Isomorphism}\label{subsec:satake}
Satake \cite{satake:spherical} showed that there is an isomorphism  $\cH \otimes \Z[q^{\pm 1/2}] \isom \cH(T, T_c)^{\Omega(T)} \otimes \Z[q^{\pm 1/2}]$, $T_c = T \cap K$ and $\Omega(T) = N_G(T) / T$ is the Weyl group. The isomorphism is defined via the following commutative diagram: 
$$
\xymatrix{
\cH(G, K) \otimes \Z[q^{\pm 1/2}] \ar@{^{(}->}[r]^{\widehat{}} \ar@{->}[d]^{|_{B}}& \cH(T, T_c) \otimes \Z[q^{\pm 1/2}] \\ 
\cH(B, L) \otimes \Z[q^{\pm 1/2}] \ar@{->}[r]^{\mathcal S}  &  \cH(T, T_c) \otimes \Z[q^{\pm 1/2}]\ar@{->}[u]_{|\delta|^{1/2}}. 
}
$$
Here, $L = B \cap K$ and $|_{B} \colon \cH(G, K) \ra \cH(B, L)$ is the restriction of functions, 
the map $\cS \colon \cH(B, L) \ra \cH(T, T_c)$ is defined by $\cS(\mathbf{1}_{L g L })  = [L \cap gL g^{-1}] \mathbf{1}_{g T_c}$ for $g \in T$, i.e., it is obtained by taking quotients by the unipotent radical, $\delta$ is the sum of the simple positive roots (in other words, the character $\delta$ of $T$ is obtained by looking at the action of $T$ on the Lie algebra of the unipotent radical $U$ of $B$), and $|\cdot|$ is normalized so that $|\varpi| = q^{-2}$. In fact, the above diagram gives an algebra homomorphism $\cS \circ |_{B} \colon \cH(G, K) \ra \cH(T, T_c)$ (the one inducing the usual Satake isomorphism), and a twisted version $|\delta|^{1/2} \circ \cS \circ |_{B} \colon \cH(G, K) \otimes \Z[q^{\pm 1/2}]\ra \cH(T, T_c) \otimes \Z[q^{\pm 1/2}]$ that we also denote by $\widehat{\cdot}$. It is explained in 
Wedhorn \cite[Prop.1.9]{wedhorn:congruence} that one has the following commutative diagram: 
$$
\xymatrix{
\cH_{\C}(G, K) \ar@{^{(}->}[r]^{|_{\Bbf}} \ar[d]^{\isom}&   \cH_{\C}(B, L)   \ar@{^{(}->}[r]^{\cS}& \cH_{\C}(T, T_c) \ar[r]^{|\delta|^{1/2}} &  \cH_{\C}(T, T_c) \ar[d]^{\isom} \\ 
\C[\Pi_{\ur}(G)] \ar[r]^{\isom} & \C[\Pi_{\ur}(T)]^{\Omega(T)} \ar@{^{(}->}[rr] && \C[\Pi_{\ur}(T)]. 
}
$$
Finally, in the case when $\delta^{1/2}$ takes values in the subgroup $q^{\Z}$, one has an isomorphism 
$$
\cH \otimes \Z[q^{-1}] \isom \cH(T, T_c)^{\Omega(T)} \otimes \Z[q^{-1}].
$$ 

\subsection{Inverting the Satake transform using buildings.}\label{subsec:satinvert}
Let $\Hyp_V$ (resp., $\Hyp_W$) denote the set of hyperspecial vertices on the building $\cB(G_V)$ (resp., $\cB(G_W)$) and let $\Hyp = \Hyp_V \times \Hyp_W$. Fix a Witt basis for $V$ and let $\A_V$ be the corresponding apartment. A choice of a fundamental chamber $\cC_V$ of $\cA_V$ gives a canonical retraction map $\rho_{\cA_V, \cC_V} \colon \cB(G_V) \ra \cA_V$ \cite[p.53]{garrett:buildings}. 
Similarly, we get a canonical retraction map $\rho_{\cA_W, \cC_W} \colon  \cB(G_W) \ra \cA_W$. 
 Let $x_0 = (x_{0, V}, x_{0, W}) \in \Hyp$ be the pair of hyperspecial vertices determined by the pair of self-dual lattices $(\langle e_+, e_0, e_-\rangle, \langle e_+, e_0\rangle)$ (the stabilizer of that pair in $G$ is $K$). If $\mu \in X_*(T)$ is a dominant co-character, the Hecke operator $\mathbf{1}_{\Ktau \mu(\varpi) \Ktau}$ acts on $x_0$ via: 
\begin{equation}
\mathbf{1}_{K \mu(\varpi) K}  \cdot x_0 = \sum_{x \in \cB(G), \mu(x, x_0) = \mu} x. 
\end{equation} 
Here, the sum is taken over all hyperspecial points $x \in \Hyp$ whose relative position (to the root $x_0$) is given by the co-character $\mu$. The notation $\mu(x, x_0)$ means the following: by the theory of elementary divisors, the position of $x_V$ relative to $x_{V, 0}$ determines a co-character $\mu_V \in X_*(T_V)$. Similarly, we get a co-character $\mu_W \in X_*(T_W)$ from $(x_W, x_{W, 0})$. 
We then define $\mu(x, x_0) = (\mu_V, \mu_W)$ and prove the following result using an idea of Cornut: 

\begin{prop}\label{prop:retract}
For a dominant co-character $\mu \in X_*(T)$, we have   
\begin{equation}\label{eq:retr2}
\cS \mathbf{1}_{\Ktau \mu(\varpi) \Ktau} \cdot x = \sum_{\substack{x = (x_V, x_W) \in \cB(G), \\ \mu(x, x_{0}) = \mu}} (\rho_{\cA_V, \cC_V}(x_V), \rho_{\cA_W, \cC_W}(x_W)). 
\end{equation} 
\end{prop}
\noindent The proof uses an auxiliary lemma: 
\begin{lem}\label{lem:canret}
Let $U_V$ be the unipotent radical of $B_V$. Each $U_{V}$-orbit of $\Hyp_{V}$ intersects the apartment $\cA_V$ at a unique point (i.e., the apartment $\cA_V$ is a fundamental domain for the action of $U_V$ on $\cB(G_V)$). Similarly, if $U_W$ is the unipotent radical of $B_W$ then each $U_W$-orbit of $\Hyp_{W}$ intersects the apartment $\cA_W$ at a unique point. 
\end{lem}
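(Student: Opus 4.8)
The plan is to deduce both assertions (existence and uniqueness of the intersection point) from two standard structural facts: the Iwasawa decomposition of $G_V$, and the retraction of $\cB(G_V)$ onto $\cA_V$ centered at the direction at infinity singled out by $B_V$. Since the argument for $U_W$ on $\cB(G_W)$ is word-for-word identical, I would only write out the case of $U_V$. Throughout I would use that $G_V$ acts transitively on $\Hyp_V=\cL(V_\tau)$ (the self-dual lattices), and that $G_V$ permutes $\Hyp_V$ — hyperspeciality of a vertex being the conjugation-invariant condition that its stabilizer be a hyperspecial maximal compact.

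First I would fix the retraction $\rho=\rho_{\cA_V,\mathfrak C}\colon \cB(G_V)\to\cA_V$ onto $\cA_V$ centered at the chamber at infinity $\mathfrak C$ of $\cA_V$ whose stabilizer is $B_V$, i.e. the direction of the sub-sector fixed pointwise by $U_V$ (cf. \cite{bruhat-tits:1}). The two properties I need are: (a) $\rho$ restricts to the identity on $\cA_V$; and (b) $\rho$ is constant on $U_V$-orbits, $\rho(ux)=\rho(x)$ for $u\in U_V$, $x\in\cB(G_V)$. Only (b) carries content: given $x$, pick an apartment $\cA''$ containing $x$ and $\mathfrak C$; since $u\in U_V$ fixes pointwise a sub-sector of $\cA''$ in the direction of $\mathfrak C$ (this is exactly the characterization of the unipotent radical of the minimal parabolic $B_V=\Stab_{G_V}(\mathfrak C)$), the isomorphism $u\cA''\to\cA_V$ used to compute $\rho(ux)$, precomposed with $u$, is an isomorphism $\cA''\to\cA_V$ fixing a sub-sector towards $\mathfrak C$, hence coincides with the one computing $\rho(x)$; comparing images gives (b). Uniqueness is then immediate: if $u_1x,u_2x\in\cA_V$ with $u_i\in U_V$, then $u_1x=\rho(u_1x)=\rho(x)=\rho(u_2x)=u_2x$ by (a) and (b), and this common point lies in $\Hyp_V\cap\cA_V$ because $u_1x\in\Hyp_V$.

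For existence I would invoke the Iwasawa decomposition $G_V=U_V\,T_V\,K_V$, where $T_V=\Tbf_V(k_0)$ and $K_V=\Stab_{G_V}(x_{0,V})$ is the hyperspecial maximal compact attached to the self-dual lattice defining $\cA_V$, so that $x_{0,V}\in\Hyp_V\cap\cA_V$. Writing an arbitrary $x\in\Hyp_V$ as $x=g\,x_{0,V}$ and then $g=utk$ with $u\in U_V$, $t\in T_V$, $k\in K_V$, we get $u^{-1}x=t k\,x_{0,V}=t\,x_{0,V}$, which lies in $\cA_V$ because $T_V=Z_{G_V}(S_V)$ normalizes $S_V$ and hence stabilizes $\cA_V$ setwise; it lies in $\Hyp_V$ because $G_V$ permutes hyperspecial vertices. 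Thus $U_V\,x$ meets $\Hyp_V\cap\cA_V$, and by the uniqueness step it meets $\cA_V$ in exactly one point. The same argument with $(U_W,B_W,\cA_W,K_W,\Tbf_W)$ disposes of $W$. The only place requiring care — rather than a routine citation — is property (b), specifically pinning down that $\mathfrak C$ is the chamber at infinity whose stabilizer is $B_V$ and that $U_V$ is precisely the pointwise fixer of a sub-sector in that direction; the rest is standard Bruhat–Tits theory, involves no computation, and does not use $n=3$.
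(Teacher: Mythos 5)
Your proof is correct, but it follows a different route from the one the paper actually writes out. The paper's proof of this lemma is a bare-hands matrix computation in the Witt-basis coordinates: from $t_2x_0=ut_1x_0$ one gets $t_2^{-1}ut_1\in K_{V}$, and since this matrix is upper triangular with the same diagonal as $t_2^{-1}t_1$, its diagonal entries are units, so $t_2^{-1}t_1\in K_V$ and $t_1x_0=t_2x_0$; existence of a point of $\cA_V$ in each $U_V$-orbit is not addressed in the lemma's proof at all (it is only used later, via $\Hyp_V=G_Vx_0=B_VK_Vx_0=B_Vx_0$, in the proof of the retraction proposition). Your argument instead runs through the sector-at-infinity retraction $\rho_{\cA_V,\mathfrak C}$ and its $U_V$-invariance, which is essentially the ``more general argument'' the paper itself sketches in the remark following the lemma (there phrased via equidistance from far points of a Weyl chamber fixed by $u$), and you supply the existence half explicitly via the Iwasawa decomposition $G_V=U_VT_VK_V$. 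What your route buys is coordinate-freeness and uniformity: it works verbatim for any $n$ and any unramified group, and the retraction machinery is needed anyway for Proposition~\ref{prop:retract}; what the paper's route buys is brevity in the concrete $3\times 3$ model with no appeal to building axioms. One small imprecision to fix in your write-up: $U_V$ is \emph{not} exactly the pointwise fixer of a subsector in the direction of $\mathfrak C$ (that fixer is a compact open subgroup which also contains the bounded part of $T_V$, acting trivially on $\cA_V$); what is true, and is all your argument for property (b) actually uses, is the inclusion that every $u\in U_V$ fixes pointwise \emph{some} subsector in that direction.
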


\begin{proof}
It suffices to prove the statement for $V$ as the case for $W$ is identical. 
We have  $\cA_V \cap \Hyp_V = T_{V} x_0$. Any element $x_V \in \Hyp_V$ is of the form $b_V x_{V, 0}$ (since $G_V = B_V  K_V$ and $K_V = \Stab_{G_V}(x_{V, 0})$). Since $b_V = u_V t_V$ for $u_V \in U_V$ and $t_V \in T_V$, it follows that $t_V x_0 \in U_V x_V \cap \cA_V$, i.e., the $U_V$-orbit $U_V x_V$ intersects the apartment in at least one point. Suppose now that there are two points $t_1 x_0$ and $t_2 x_0$ that are in the same $U_V$-orbit. Then there is $u \in U_V$ such that $t_2 x_0 = ut_1x_0$, i.e., $t_2^{-1} u t_1 \in \Ktau$. The latter means (looking only at the diagonal entries for the matrix representation with respect to the Witt basis for $V = W \perp D$) that $t_2^{-1} t_1$ has entries in $\cO_{E}$, i.e., $t_2^{-1} t_1 \in \Ktau$ and hence, $t_1 x_0 = t_2 x_0$, since $K = \Stab_{G}(x_0)$.  
\end{proof}

\begin{rem}\label{rem:canret}
Lemma~\ref{lem:canret} characterizes the canonical retraction completely and is often used as a definition for $\rho_{\cA_V, \cC_V}$ (resp., $\rho_{\cA_W, \cC_W}$): since $\cA_V$ is a fundamental domain for the action of $U_V$ on $\cB(G_V)$, we define $\rho_{\cA_W, \cC_W}(x)$ to be the point corresponding to $x$ on this fundamental domain. One then deduces the distance-preserving property of the canonical retraction as a consequence. Recall that the latter says that if $y_V \in \cA_V$ (resp., $y_W \in \cA_W$) is a point very far in the ray determined by the chamber $\cC_V$ then  
$\dist (y_V, x_V) = \dist(y_V, \rho_{\cA_V, \cC_V}(x_V))$ for all $x_V \in \cB(G_V)$. 
\end{rem}

\begin{rem}
The lemma can also be proved via the following more general argument: the pointwise stabilizer $U_V(0) \subset U_V$ of the chamber $\cC_V \subset \cA_V$ is a compact open subgroup. Given $t \in T_V$, the translated chamber $t \cC_V$ has a stabilizer 
$U_V(t) = t U_V(0)t^{-1}$. We also have $\bigcup_{t \in T_V} U_V(t)= U_V$. Assuming $t_2 x_{V, 0} = u t_1 x_{V, 0}$, choose $t \in T_V$ such that $u \in U(t)$. For any point 
$a \in tC_V$, one has  
$$
\dist ( a, t_2 x_{V, 0}) = \dist(ua, ut_1 x_{V, 0}) = \dist(a, t_1x_{V, 0}). 
$$
Thus, any point $a \in t \cC_V$ is equidistant from $t_1 x_{V, 0}$ and $t_2 x_{V, 0}$. But the set of points in $\cA_V$ that are equidistant from $t_1 x_{V, 0}$ and $t_2 x_{V, 0}$ is a hyperplane unless both points are equal, i.e., $t_1 x_{V, 0} = t_2 x_{V, 0}$. 
\end{rem}

\begin{proof}{(Proof of Proposition~\ref{prop:retract})}
The Hecke algebra $\cH(G, K)$ is isomorphic to $\Z[K \backslash G / K]$ and the latter is isomorphic to $\End_{\Q[G]}(\Z[G /\Ktau])$ (to give a $\Z[G]$-equivariant endomorphism $\vphi$ of $\Z[G / \Ktau]$, it suffices to specify $\vphi(K)$ that is $K$-invariant). We thus have  
$$
\cH(G, K) \isom \End_{\Z[G]}(\Z[G /  \Ktau]) \isom \End_{\Z[G]}(\Z[\Hyp]). 
$$
Moreover, the restriction map $|_{B}$ is simply 
$$
|_{B} \colon \End_{\Z[G]}(\Z [\Hyp ]) \hra \End_{\Z[B]}(\Z[\Hyp]).
$$
On the level of endomorphisms, the Satake transform $\cS$ is the composition  
$$
\End_{\Z[G]}(\Z[G/K]) \hra \End_{\Z[B]}(\Z[B/L]) \ra \End_{\Z[T]}(\Z[T/T_c]). 
$$
Since $\Hyp = G \cdot x_0 = BK \cdot x_0 = B \cdot x_0$ and since $B  = UT$ where $U$ is the unipotent radical, the last map is induced from the maps $b_V \cdot x_{V, 0} \mapsto t_V \cdot x_{V, 0}$ (here, $b_V \in B_V$, $b_V = u_Vt_V$, $u_V\in U_V$, $t_V \in T_V$) and $b_W \cdot x_{W, 0} \mapsto t_W \cdot x_{W, 0}$ where $b_W \in B_W$, $b_W = u_Wt_W$, $u_W\in U_W$, $t_W \in T_W$. By Lemma~\ref{lem:canret} and Remark~\ref{rem:canret}, this is exactly the map 
$$
\cB(G_V) \times \cB(G_W) \ra \cA_V \times \cA_W, \qquad (x_V, x_W) \mapsto (\rho_{\cA_V, \cC_V}(x_V), \rho_{\cA_W, \cC_W}(x_W)). 
$$
%To describe the last map via Bruhat--Tits theory, we now that $U_V$ (resp., $U_W$) fixes pointwise a neighborhood of the point at infinite for a half apartment of $\cA_V$ (resp., $\cA_W$). Equivalently, we say that $U_V$ is the stabilizer of a boundary point on the tree (corresponding to a half-line of $\cA_V$). This means that if $y_V \in \cA_V$ (resp., $y_W \in \cA_W$) is a point that is very far and that it is fixed by $U_V$ (resp., $U_W$), we have 
%$$
%\dist (y_V, b_V x_{V, 0}) = \dist(u_V y_V, u_V t_V x_{V, 0}) = \dist(y_V, t_V x_{V, 0}).  
%$$
%That means that the image of $b_V x_{V, 0}$ under the last map is precisely $\rho_{\cA_V, C_V}(b_V x_{V, 0})$ and the same for $x_{W, 0}$. This proves the proposition. 
\end{proof}

The latter can now be computed explicitly by counting how many points $(x_V, x_W) \in \cB(G_V) \times \cB(G_W)$ retract to a given point $(y_V, y_W) \in \cA_V \times \cA_W$.  Below we have shown the building $\cB(G_V)$ together with the apartment 
$\cA_V$:

\begin{center}
\begin{picture}(300,100)
	% Open Circles
	\put(50, 0){\circle*{6}}
	\put(100, 0){\circle{6}}
	\put(150,0){\circle*{6}}
	\put(200,0){\circle{6}}
	\put(250, 0){\circle*{6}}
	
	\put(46, -12){$x_{-1}$}
	\put(148, -12){$x_0$}
	\put(248, -12){$x_{1}$}
	
	\put(90,49){\circle*{6}}
	\put(100,50){\circle*{6}}
	\put(110,49){\circle*{6}}
	
	\put(132,46){\circle{6}}
	\put(140,49){\circle{6}}
	\put(150,50){\circle{6}}
	\put(160,49){\circle{6}}
	\put(168,46){\circle{6}}	
		
	\put(190,49){\circle*{6}}
	\put(200,50){\circle*{6}}
	\put(210,49){\circle*{6}}
	
	\put(140,98){\circle*{6}}
	\put(150,100){\circle*{6}}
	\put(160,98){\circle*{6}}
	
	\put(82,92){\circle{6}}
	\put(90,98){\circle{6}}
	\put(100,100){\circle{6}}
	\put(110,98){\circle{6}}
	\put(118,92){\circle{6}}
	
	\put(182,92){\circle{6}}
	\put(190,98){\circle{6}}
	\put(200,100){\circle{6}}
	\put(210,98){\circle{6}}
	\put(218,92){\circle{6}}
	
	\put(50,0){\line(-1,0){47}}
	\put(50,0){\line(1,0){47}}
	\put(150,0){\line(-1,0){47}}
	\put(150,0){\line(1,0){47}}
	\put(250,0){\line(-1,0){47}}
	\put(250,0){\line(1,0){47}}
	
	\put(150,0){\line(2,5){17}}
	\put(150,0){\line(1,5){9}}
	\put(150,0){\line(0,1){47}}
	\put(150,0){\line(-1,5){9}}
	\put(150,0){\line(-2,5){17}}
	
	\put(100,50){\line(2,5){16}}
	\put(100,50){\line(1,5){9}}
	\put(100,50){\line(0,1){47}}
	\put(100,50){\line(-1,5){9}}
	\put(100,50){\line(-2,5){16}}
	
	\put(200,50){\line(2,5){16}}
	\put(200,50){\line(1,5){9}}
	\put(200,50){\line(0,1){47}}
	\put(200,50){\line(-1,5){9}}
	\put(200,50){\line(-2,5){16}}
	
	\put(100,3){\line(1,5){9}}
	\put(100,3){\line(0,1){47}}
	\put(100,3){\line(-1,5){9}}
	
	\put(200,3){\line(1,5){9}}
	\put(200,3){\line(0,1){47}}
	\put(200,3){\line(-1,5){9}}
	
	\put(150,53){\line(1,5){9}}
	\put(150,53){\line(0,1){47}}
	\put(150,53){\line(-1,5){9}}
	
	\put(-20, -3){$-\infty$}
	\put(300, -3){$+\infty$}
\end{picture}
\end{center}

\vspace{0.1in}

\noindent For brevity, if $a, b \in \Z$, set 
$
t_{a, b} = \mathbf{1}_{K (\diag(\varpi^a, 1, \varpi^{-a}), \diag(\varpi^b, \varpi^{-b})) K}. 
$

\vspace{0.1in}

\noindent \textbf{Computing $\widehat{t_{1,0}}$.} In this case, \eqref{eq:retr2} shows that $\cS(t_{1,0})(x_0)$ is a sum of points 
$x_{-1}, x_0$ and $x_1$. To figure out the multiplicities, we need to figure out the number of points on the sphere $S_2(x_{0, V}) = \{x \in \cB(G_V) \colon \dist(x, x_{0, V}) = 1\}$ that retract to $x_{-1}, x_0$ and $x_1$, respectively. 
\begin{enumerate}
\item $x_1$ occurs with multiplicity 1, 

\item $x_0$ occurs with multiplicity $q+1-2$ as the vertices that retract to $x_0$ are precisely the neighbors of $x_{1/2}$ that are different from $x_0$ and $x_1$, 

\item $x_{-1}$ occurs with multiplicity $1 + (q-1) + (q^3 - 1)q = q^4$. 
\end{enumerate}
Thus, 
$$
|_{B} \circ \cS(t_{1, 0}) =  \mathbf{1}_{(\delta_V, 1) T_c} + q^4 \mathbf{1}_{(\delta_V^{-1}, 1)T_c} + (q-1) \mathbf{1}_{T_c}. 
$$
The twisted Satake transform is then 
\begin{equation}\label{eq:t10}
\widehat{t_{1,0}} = q^2 (\mathbf{1}_{(\delta_V, 1) T_c} + \mathbf{1}_{(\delta_V^{-1}, 1) T_c}) + (q-1) = q^2 s_{1,0} + (q-1). 
\end{equation}

\vspace{0.1in}

\noindent \textbf{Computing $\widehat{t_{0,1}}$.} Similarly, we have that $\cS(t_{0,1})(x_0)$ is a sum of points 
$x_{-1}, x_0, x_1$ with multiplicities given by the number of points on the sphere $S_2(x_{0, W}) = \{y \in \cB(G_W) \colon \dist(y, x_{0, W}) = 1\}$ retracting to 
$x_{-1}, x_0$ and $x_1$, respectively. We have 
\begin{enumerate}
\item $x_1$ with multiplicity 1, 

\item $x_0$ with multiplicity $q+1 - 2$ (all the neighbors of $x_{1/2}$ lie on $\cB(G_W)$),  

\item $x_{-1}$ with multiplicity $1 + (q-1) + (q-1)q = q^2$. 
\end{enumerate}
Thus, 
$$
|_{B} \circ \cS(t_{0, 1}) =  \mathbf{1}_{(1, \delta_W) T_c} + q^2 \mathbf{1}_{(1, \delta_W)T_c} + (q-1) \mathbf{1}_{T_c}.
$$
and hence, 
\begin{equation}\label{eq:t01}
\widehat{t_{0,1}} = q \left (\mathbf{1}_{(1, \delta_W) T_c} + \mathbf{1}_{(1, \delta_W^{-1}) T_c} \right ) + (q-1) = qs_{0, 1} + (q-1). 
\end{equation}

\comment{
\vspace{0.1in}

\noindent \textbf{Computing $\widehat{t_{2,0}}$:} In this case, the relevant vertices are $x_{-2}, x_{-1}, x_0, x_1$ and $x_2$. 
\begin{enumerate}
\item There is a unique vertex in $\Hyp$, namely, $x_2$ that retracts to $x_2$.  
\item There are $p+1 - 2 = p-1$ vertices that retract to $x_1$, namely, all the neighbors of 
$x_{3/2}$ besides $x_1$ and $x_2$. 
\item There are $(p^3 - 1)p$ vertices that retract to $x_0$, namely, these are all vertices at the second level rooted at $x_1$. 

\item There are $(p-1)p^3\cdot p$ vertices that retract to $x_{-1}$. 

\item There are $1 + (p-1) + (p^3-1)p + (p-1)p^3 \cdot p + (p^3-1)p\cdot p^3\cdot p = p^8$ vertices that retract to $x_{-2}$. 
\end{enumerate}
To summarize: 
$$
|_{B} \circ \cS(\mathbf{1}_{\Ktau t \Ktau}) = \mathbf{1}_{(t_V^2, 1) T_c} + (p-1) \mathbf{1}_{(t_V, 1) T_c} + (p^3-1)p \mathbf{1}_{T_c} + p^4(p-1)\mathbf{1}_{(t_V^{-1}, 1) T_c} + p^8\mathbf{1}_{(t_V^{-2}, 1) T_c}, 
$$
so the twisted Satake transform is 
\begin{equation}
\widehat{\mathbf{1}_{\Ktau t \Ktau}} = p^4 \mathbf{1}_{(t_V^2, 1) T_c} + (p-1)p^2 \mathbf{1}_{(t_V, 1) T_c} + (p^3-1)p \mathbf{1}_{T_c} + (p-1)p^2 \mathbf{1}_{(t_V^{-1}, 1) T_c} + p^4\mathbf{1}_{(t_V^{-2}, 1) T_c}. 
\end{equation}

\vspace{0.1in}

\noindent \textbf{Case $t = (t_V, t_W)$:} In this case, the points of interest are $(x_i, y_j)$ where $i \in \{-1, 0, 1\}$ and $j \in \{-1, 0, 1\}$. We need to do the analysis only for $\cB(G_W)$: 
\begin{enumerate}
\item There is a single point that retracts to $y_1$, namely $y_1$.  

\item There are $(p-1)$ points retracting to $y_0$, namely, the $(p-1)$ black neighbors of $y_{1/2}$ that are distinct from $y_0$ and $y_1$.   

\item There are $1+(p-1) + (p-1)p = p^2$ points retracting to $y_{-1}$, namely, $y_{-1}$, the other $(p-1)$ black neighbors of $y{-1/2}$ distinct from $y_0$, together with all black neighbors of the white neighbors of $y_0$ distinct from $y_{\pm 1/2}$.  
\end{enumerate}
To summarize: 
\begin{eqnarray*}
|_{B} \circ \cS(\mathbf{1}_{\Ktau t \Ktau}) &=& (\mathbf{1}_{(t_V, t_W) T_c} + p^6 \mathbf{1}_{(t_V^{-1}, t_W^{-1})T_c}) + (p^4 \mathbf{1}_{(t_V^{-1}, t_W) T_c} + p^2 \mathbf{1}_{(t_V, t_W^{-1}) T_c}) + \\
&+& (p-1)(\mathbf{1}_{(t_V, 1)T_c} + p^4 \mathbf{1}_{(t_V^{-1}, 1)T_c}) + (p-1)(\mathbf{1}_{(1, t_W)T_c} + p^2 \mathbf{1}_{(1, t_W^{-1}) T_c}) + (p-1)^2 \mathbf{1}_{T_c}, 
\end{eqnarray*}
which means that 
\begin{eqnarray*}
\widehat{\mathbf{1}_{\Ktau t \Ktau}} &=& p^3 \left ( \mathbf{1}_{(t_V, t_W) T_c} + \mathbf{1}_{(t_V^{-1}, t_W^{-1})T_c} + \mathbf{1}_{(t_V^{-1}, t_W) T_c} + \mathbf{1}_{(t_V, t_W^{-1}) T_c} \right ) + \\
&+& (p-1)p^2 \left ( \mathbf{1}_{(t_V, 1)T_c} + \mathbf{1}_{(t_V^{-1}, 1)T_c} \right ) + 
(p-1)p \left ( \mathbf{1}_{(1, t_W)T_c} + \mathbf{1}_{(1, t_W^{-1}) T_c} \right )  + (p-1)^2 \mathbf{1}_{T_c}. 
\end{eqnarray*}
}

\comment{
\vspace{0.1in}

\noindent \textbf{Case $t = (1, t_W^2)$:} Here, the computation is similar to the case $t = (t_V^2, 1)$, namely, we get that: 
\begin{enumerate}
\item There is a unique vertex in $\Hyp$, namely, $y_2$ that retracts to $y_2$.  
\item There are $p+1 - 2 = p-1$ vertices that retract to $y_1$, namely, all the neighbors of 
$y_{3/2}$ besides $y_1$ and $y_2$. 
\item There are $(p - 1)p$ vertices that retract to $y_0$, namely, these are all vertices at the second level rooted at $y_1$. 

\item There are $(p-1)p^2$ vertices that retract to $y_{-1}$. 

\item There are $1 + (p-1) + (p-1)p + (p-1)p^2  + (p-1)p^3 = p^4$ vertices that retract to $y_{-2}$. 
\end{enumerate}
We then get 
$$
|_{B} \circ \cS(\mathbf{1}_{\Ktau t \Ktau}) = \mathbf{1}_{(1, t_W^{2}) T_c} + (p-1) \mathbf{1}_{(1, t_W) T_c} + (p-1)p \mathbf{1}_{T_c} + p^2(p-1)\mathbf{1}_{(1, t_W^{-1}) T_c} + p^4\mathbf{1}_{(1, t_W^{-2}) T_c}, 
$$
so the twisted Satake transform is 
\begin{equation}
\widehat{\mathbf{1}_{\Ktau t \Ktau}} = p^2\mathbf{1}_{(1, t_W^{2}) T_c} + (p-1)p \mathbf{1}_{(1, t_W) T_c} + (p-1)p \mathbf{1}_{T_c} + (p-1)p\mathbf{1}_{(1, t_W^{-1}) T_c} + p^2\mathbf{1}_{(1, t_W^{-2}) T_c}. 
\end{equation}
}

%\paragraph{Recurrence relations for Hecke operators.} For consistency check, we recall the following recurrence relations in the Hecke algebra. Since the Satake map is an algebra homomorphism, we can double check the computations of $\widehat{t_{2,0}}$ and 
%$\widehat{t_{0,2}}$. Indeed, we have the two basic recurrence relations 
%\begin{equation}
%t_{2, 0} = t_{1, 0}^2 - (q-1)t_{1, 0} + q(q^3 + 1), \qquad t_{0,2} = t_{0,1}^2 - (q-1)t_{0,1} - q(q+1),  
%\end{equation} 
%and using the fact that we know $\widehat{t_{0, 1}}$ and $\widehat{t_{1, 0}}$, we can calculate 

\paragraph{Final form of the Hecke polynomial.} 
Finally, we compute the inverse image of the polynomial \eqref{eq:hecke-torus} under the Satake transform using 
only the identities \eqref{eq:t01} and \eqref{eq:t10} to obtain 
%We obtain  
%$$
%\widehat{H}^{(2)}(z) = z^2 - q^2(\widehat{t_{0,1}} - (q-1))z + q^6,  
%$$
%and 
%\begin{eqnarray*}
%\widehat{H}^{(4)}(z) &=&  z^4 + (-\widehat{t_{1,0}}\widehat{t_{0, 1}} + (q - 1)\widehat{t_{1,0}} + (q - 1)\widehat{t_{0,1}} - (q-1)^2)z^3 + \\
%&+& q^2(\widehat{t_{1,0}}^2 + q^2\widehat{t_{0,1}}^2 -2(q-1) \widehat{t_{1,0}} + -2q^2(q-1)\widehat{t_{0,1}} - 
%q^4 - 2q^3 + 2q^2 - 2q + 1)z^2 + \\
%&+& q^6(-\widehat{t_{1,0}} \widehat{t_{0,1}} + (q-1)\widehat{t_{1,0}} + (q-1)\widehat{t_{0,1}} - (q-1)^2)z + q^{12}. 
%\end{eqnarray*}
\begin{equation}\label{eq:h2}
H^{(2)}(z) = z^2 - q^2(t_{0,1} - (q-1))z + q^6. 
\end{equation}
and 
\begin{eqnarray}\label{eq:h4}
H^{(4)}(z) &=&  z^4 + (-t_{1,0}t_{0, 1} + (q - 1)t_{1,0} + (q - 1)t_{0,1} - (q-1)^2)z^3 + \\
&+& q^2(t_{1,0}^2 + q^2t_{0,1}^2 -2(q-1) t_{1,0} + -2q^2(q-1)t_{0,1} - 
q^4 - 2q^3 + 2q^2 - 2q + 1)z^2 + \\
&+& q^6(-t_{1,0} t_{0,1} + (q-1)t_{1,0} + (q-1)t_{0,1} - (q-1)^2)z + q^{12}. 
\end{eqnarray}
which completes the proof of Theorem~\ref{thm:hecke}.Ê

%Putting things together, 
%$$
%H_p^{(2)}(z) = z^2 - p^2(\mathbf{1}_{\Ktau(1, t_W)\Ktau} - (p-1) \mathbf{1}_{\Ktau}) z + p^6 \mathbf{1}_{\Ktau}. 
%$$
%For the polynomial $H_p^{(4)}(z)$, we compute $(|_{\Bbf} \circ \cS)^{-1}(d_i)$ for $i = 0, \dots, 3$ to get 
%\begin{eqnarray*}
%(|_{\Bbf} \circ \cS)^{-1}(d_3) &=& -\mathbf{1}_{\Ktau (t_V, t_W) \Ktau} + (p-1)( \mathbf{1}_{\Ktau (t_V, 1) \Ktau} + \mathbf{1}_{\Ktau (1, t_W) \Ktau}) + (p-1)(p-3) \mathbf{1}_{\Ktau}, \\ 
%(|_{\Bbf} \circ \cS)^{-1}(d_2) &=& p^2 \mathbf{1}_{\Ktau (t_V^2, 1)\Ktau} - (p-1)p^2 \mathbf{1}_{\Ktau (t_V, 1)\Ktau} + p^4 \mathbf{1}_{\Ktau (1, t_W^2)\Ktau} - (p-1)p^4 \mathbf{1}_{\Ktau (1, t_W)\Ktau} + \\ 
%&+& p^2(p^4 + 1 - p(p-1)^2) \mathbf{1}_{\Ktau}, \\
%(|_{\Bbf} \circ \cS)^{-1}(d_1) &=& p^6 (  -\mathbf{1}_{\Ktau (t_V, t_W) \Ktau} + (p-1)( \mathbf{1}_{\Ktau (t_V, 1) \Ktau} + \mathbf{1}_{\Ktau (1, t_W) \Ktau}) + (p-1)(p-3) \mathbf{1}_{\Ktau}), \\ 
%(|_{\Bbf} \circ \cS)^{-1}(d_0) &=& p^{12} \mathbf{1}_{\Ktau}.  
%\end{eqnarray*}

%\subsection{The Hecke polynomials for $\GU(2,1)$ and $\GU(1,1)$} 

%\begin{lem}
%The Hecke polynomial $\Htilde_{V, \tau}(z)$ is the following polynomial: 
%\begin{equation}
%\Htilde_{V, \tau}(z) = (z^2 - ((q-1)\mathbf{1}_{\varpi \Ktilde_\tau} - \mathbf{1}_{\Ktilde_\tau \varpi g \Ktilde_\tau}) z + q^4 \mathbf{1}_{\varpi^2 \Ktilde_{\tau}}) (z - q^2 \mathbf{1}_{\varpi \Ktilde_\tau}),  
%\end{equation}
%where $q = q_\tau$ is the size of the residue of $F$ at $\tau_0$ (the place of $F$ below $\tau$) 
%\end{lem}

%
% The Congruence Relation of Blasius--Rogawski 
%
\section{The Congruence Relation of Blasius--Rogawski}\label{sec:cong-blas-rog}
\setcounter{paragraph}{0}

Here, we prove Theorems~\ref{thm:congrelprod} and ~\ref{thm:centraltwist}. 
We then remark that these results, together with recent results of J.-S. Koskivirta \cite{koskivirta:journal} for 
$F = \Q$, imply
%(building on the work of B\"ultel and Wedhorn \cite{bultel-wedhorn}) 
Conjecture~\ref{thm:congrel} in our setting when $F = \Q$. 

\subsection{Proof of Theorem~\ref{thm:congrelprod}}
Let $R$ be a commutative ring and let $R_1$ and $R_2$ be two commutative rings that contain $R$. Let $x_1, \dots, x_{d_1} \in R_1$ and $y_1, \dots, y_{d_2} \in R_2$. Consider the polynomial 
$$
\prod_{i = 1}^{d_1} \prod_{j = 1}^{d_2} (z - x_i \otimes y_j) \in (R_1 \otimes R_2)[z]
$$
Since it is symmetric in both $x_i$'s and $y_j$'s, it can be written as 
$$
\sum_{i = 1}^{d_1 d_2 } P_i (u_1, \dots, u_{d_1}) \otimes Q_i(v_1, \dots, v_{d_2})  z^i, 
$$
where $u_j = (-1)^j \sigma_j(x_1, \dots, x_{d_1})$ for $j =1, \dots, d_1$ and $v_k = (-1)^k \sigma_k(y_1, \dots, y_{d_2})$ for $k = 1, \dots, d_2$ are the $j$th and $k$th elementary symmetric polynomials in the variables $x_1, \dots, x_{d_1}$ and $y_1, \dots, y_{d_2}$, respectively. 
Clearly, 
$$
H_1(z) = z^{d_1} + u_1 z^{d_1 - 1} + \dots + u_{d_1}  = \prod_{j = 1}^{d_1} (z - x_j) \in R_1[z], 
$$ 
and 
$$
H_2(z) = z^{d_2} + v_1 z^{d_2 - 1} + \dots + v_{d_2}  = \prod_{k = 1}^{d_2} (z - y_k) \in R_2[z]. 
$$ 
We define the tensor product polynomial $H = H_1 \otimes H_2$ as 
$$
H(z) := \sum_{i = 1}^{d_1 d_2} P_i(u_1, \dots, u_{d_1}) \otimes Q_i(v_1, \dots, v_{d_2}) z^i \in (R_1 \otimes R_2)[z]
$$

\begin{lem}\label{lem:tens}
The element $H(z_1 \otimes z_2)$ belongs to the ideal $\langle H_1(z_1) \otimes 1, 1 \otimes H_2(z_2)\rangle$ of 
the ring $(R_1 \otimes R_2)[z_1 \otimes 1, 1 \otimes z_2]$. 
\end{lem}

\begin{proof}
Using  
$z_1 \otimes z_2 - x_i \otimes y_j = (z_1 - x_i) \otimes z_2 + x_i \otimes (z_2 - y_j)$ and $\ds H(z_1 \otimes z_2) = \prod_{i=1}^{d_1} \prod_{j=1}^{d_2} (z_1 \otimes z_2 - x_i \otimes y_j)$, we obtain 
\begin{eqnarray*}
H(z_1 \otimes z_2) &=& \left ( (z_1 -x_1) \otimes z_2 + x_1 \otimes (z_2 - y_1) \right ) \cdot \\
&\cdot&  \left ( (z_1 - x_1) \otimes z_2 + x_1 \otimes (z_2 - y_2) \right ) \cdot \\ 
&\vdots& \\
&\cdot& \left ( (z_1 - x_1) \otimes z_2 + x_1 \otimes (z_2 - y_{d_2}) \right ) \cdot \\ 
&\cdot& \left ( (z_1 -x_2) \otimes z_2 + x_2 \otimes (z_2 - y_{1}) \right ) \cdot \\
&\vdots& \\ 
&\cdot& \left ( (z_1 -x_{d_1}) \otimes z_2 + x_{d_1} \otimes (z_2 - y_{d_2}) \right )
\end{eqnarray*}
Expanding the product and considering regrouping all the terms containing each factor $(z - x_i)$ at least one, we can write the above product as 
\begin{eqnarray*}
H(z_1 \otimes z_2) &=& (z_1 - x_1) \dots (z_1 - x_{d_1}) A_1(z_1) \otimes B_1(z_2) + A_2(z_1) \otimes (z_2 - y_1) \dots (z_2 - y_{d_2}) B_2(z_2) = \\ 
&=& H_1(z_1) A_1(z_1) \otimes B_1(z_2) + A_2(z_1) \otimes H_2(z_2) B_2(z_2), 
\end{eqnarray*}
where $A_1(z_1), A_2(z_1) \in R_1[z_1]$ and $B_1(z_2), B_2(z_2) \in R_2[z_2]$. 
%Expanding the product, we notice that each term is of the form 
%\begin{equation*}
%(z_1 - x_1)^{e_1} \dots (z_1 - x_{d_1})^{e_{d_1}} \otimes (z_2 - y_1)^{f_1} \dots (z_2 - y_{d_2})^{f_{d_2}}. 
%\end{equation*}
%\dimnote{This is not well explained.}
%To prove the lemma, observe that if $e_i \geq 1$ for all $1 \leq i \leq d_1$ then $(z_1 - x_1)^{e_1} \dots (z_1 - x_{d_1})^{e_{d_1}} = H_1(z_1) \psi$ for some $\psi \in \Z[z_1]$, so $H(z_1 \otimes z_2)(x \otimes y) = \psi H_1(\vphi_1)x \otimes y' = 0$ since $H_1(\vphi_1)x = 0$. If there is $i$ such that $e_i = 0$ then we see that $f_j \geq 1$ for all $1 \leq j \leq d_2$ and using $H_2(\vphi_2)y = 0$, we see that $H(\vphi_1 \otimes \vphi_2)(x \otimes y)$
\end{proof}

\noindent We will deduce Theorem~\ref{thm:congrelprod} from Lemma~\ref{lem:tens} via K\"unneth formula for intersection cohomology: 
\begin{equation}\label{eq:kunneth}
\IH^n(\overline{\Sh_{K}(\G, X)}_{\Qbar}, \Q_\ell) = \bigoplus_{p +q= n} \IH^p(\overline{\Sh_{K_1}(\G_1, X_1)}_{\Qbar}, \Q_\ell ) \otimes  \IH^q(\overline{\Sh_{K_2}(\G_2, X_2)}_{\Qbar}, \Q_\ell ). 
\end{equation} 

The latter requires some justification (especially, since the K\"unneth formula for intersection cohomology fails in general). Blasius and Rogawski state the conjecture for the intersection cohomology of ``middle perversity" for the Baily--Borel compactification \cite[\S 2]{blasius-rogawski:zeta} which by Zucker's conjecture proven by Looijenga \cite{looijenga} and Saper--Stern \cite{saper-stern} is isomorphic to the $L^2$-cohomology. The K\"unneth formula is then a consequence of the work of Cheeger \cite{cheeger} (see also \cite{cohen-goresky-ji}).  

To complete the proof, observe that the Hecke polynomial $H_{\tau}$ at $\tau$ for $(\G, X)$ is the tensor product of the 
Hecke polynomial $H_{1, \tau}$ for $(\G_1, X_1)$ and the Hecke polynomial $H_{2, \tau}$ for $(\G_2, X_2)$.  
Using \eqref{eq:kunneth} and the fact that $H_{i, \tau}(\Fr_\tau)$ vanishes on $\IH^{*}(\overline{\Sh_{K_s}(\G_s, X_s)}_{\Qbar}, \Q_\ell )$ for $s = 1,2$, Lemma~\ref{lem:tens} implies that $H(\Fr_\tau)$ vanishes on $\IH^*(\overline{\Sh_{K}(\G, X)}_{\Qbar}, \Q_\ell )$. 

%Here, we have to observe that when specialized to $\pi_{V, \tau}$ and $\pi_{W, \tau}$ and $\pi_{\tau} = \pi_{V, \tau} \boxtimes \pi_{W, \tau}$, respectively, the roots of the Hecke polynomials $H_{\tau}(\pi_{\tau}; z)$ are the pairwise $\alpha_i \beta_j$ where 
%$\alpha_i$ and $\beta_j$ are the roots of $H_{\tau}(\pi_{V, \tau}; z)$ and $H_{\tau}(\pi_{W, \tau}; z)$, respectively.

\subsection{Proof of Theorem~\ref{thm:centraltwist}}
The homomorphism of algebraic groups 
$\omega_{\R} \colon \Sbf_{\R} \ra \Z_{\G_{0, \R}} \hra \G_{0, \R}$ yields a map on complex points  
$$
f \colon \Sh_{K_0}(\G_0, X_0) \ra \Sh_{K_0}(\G_0, \omega X_0), \qquad \G_0(\Q)(g, x)K_0 \ra \G_0(\Q) (g, \omega_{\R} x) K_0. 
$$ 
Here, $x$ is viewed as a homomorphism $x \colon  \Sbf_{\R} \ra \G_{0, \R}$. 
A priori, the map $f$ is a holomorphic map of complex analytic manifolds. By Borel's theorem \cite[Thm.3.14]{milne:shimura}, it is a regular map defined over $\C$. Both Shimura varieties $\Sh_{K_0}(\G_0, X_0)$ and $\Sh_{K_0}(\G_0, \omega X_0)$ have canonical models $\mathscr M$ and $\mathscr M_{\omega}$, respectively, defined over the reflex field $E$. We now show that $f$ yields a morphism defined over a finite abelian extension of $E$ on these canonical models that depends on $K_0$. Moreover, we will explicitly compute that extension. 
%It is Hecke equivariant, but not Galois equivariant. 

To do that, we will restrict $f$ to special points which are Zariski dense in $\mathscr M$ and we will 
use the reciprocity law on special points. 
%We will compute precisely how it transforms under the Galois group $\Gal(E^{\ab} / E)$ and hence, obtain explicitly its field of definition using the reciprocity law on special points. As we will see, its field of definition will depend on the level structure $\Ktilde$. 
More precisely, associated to $f$ is a 1-cocycle 
$$
u \colon \Aut(\C/\iota(E)) \ra \Aut \left (\mathscr M_{\C} \right ), \qquad u(\sigma) := f^{-1} (\sigma \cdot f). 
$$
Clearly, $f$ is defined over an extension $L$ of $\iota(E)$ ($L \subset \C$) if and only if 
$u|_{\Aut(\C/L)}$ is trivial. 

\begin{lem}\label{lem:cocyc}
For any special pair $(\Tbf, x)$ on $\Sh_{K_0}(\G_0, X_0)$ and any $\sigma \in \Aut(\C / \iota(E))$, we have 
\begin{equation}\label{eq:cocycle}
\sigma \cdot f([g, x]) = \langle s \rangle_{\omega} f(\sigma \cdot [g, x]),  
\end{equation}
where $s \in \mathbf{A}_E^\times$ is any element for which $\Art_E(s) = \sigma|_{\iota (E^{\ab})}$ and 
$$
\langle s \rangle_{\omega} \colon \Sh_{K_0}(\G_0, X_0) \ra \Sh_{K_0}(\G_0, \omega X_0)
$$ 
is given by $\langle s \rangle_{\omega} \colon [g', x'] \mapsto [\omega(s) g', x']$. 
\end{lem}

\begin{proof}
%Let $(\Tbf, x)$ be a special pair as in Section~\ref{par:specialpts} where $\Tbf \subset \G_0$ is a torus 
%and let  
%Then $x$ corresponds to a homomorphism $x \colon \Sbf_{\R} \ra \G_{\R}$ that factors through $\Tbf_{\R}$. Composing $x_{\C}$ with the map $\mathbf{G}_{m, \C} \ra \Sbf_{\C} \isom \mathbf{G}_{m, \C}  \times \mathbf{G}_{m, \C} $ given by $z \mapsto (z, 1)$ yields a cocharacter 
Let $\mu_x \colon\mathbf{G}_{m, \C} \ra \Tbf_{\C}$ be the associated co-character to $x \in X$ and 
%This is the co-character giving the reciprocity law for the action of $\sigma \in \Gal(E^{\ab} / E)$ on $[g, x] \in \Sh_{K_0}(\G_0, X_0)$. More precisely, $\mu_x$ gives rise to a homomorphism $r(\mu_x, \Tbf) \colon \Res_{E/\Q} \mathbf{G}_{m, E} \ra \Tbf$ of algebraic groups over $\Q$ defined by 
%$$
%r(\mu_x, \Tbf)(t) := \prod_{\rho \colon E \hra \Qbar} \rho(\mu_x(t)).  
%$$
%Note that the sum on the right-hand side is defined over $\Q$. This gives us a homomorphism $r_x \colon \mathbf{A}_{E}^\times \ra \Tbf(\Af)$. Now, if $s \in \mathbf{A}_{E}^\times$ is an id\`ele whose image in $\Gal(E^{\ab} / E)$ under the Artin map is exactly $\sigma$ then 
%$\sigma [g, x] = [r_x(s) g, x]$.
let 
$
\mu_{\omega} \colon \mathbf{G}_{m, \C} \xra{z \mapsto (z, 1)} \mathbf{G}_{m, \C} \times \mathbf{G}_{m, \C} \isom \Sbf_{\C} \xra{\omega_{\C}}  \Tbf_{\C}
$
be the associated cocharacter to $\omega$ (recall that it descends to a cocharacter over $E$). Note that $\omega = \mu_{\omega} \overline{\mu_{\omega}}$. 
Note that $f([g, x]) = [g, y]$ where $y = x \omega$ and $\mu_y = \mu_x \mu_{\omega}$. 
%If $\sigma \in \Aut(\C / E)$ and $s = (s_f, s_\infty) \in \mathbf{A}_{E, f}^\times \times \mathbf{A}_{E, \infty}^\times$ are as above, 
Using the reciprocity law from Section~\ref{par:specialpts}, we obtain
$$
\sigma \cdot f([g, x]) = \sigma \cdot [g, y] = [r_y(s) g, y] = [\mu_{\omega}(s) \overline{\mu_{\omega}(s)} r_x(s) g, y] = [\omega(s) r_x(s)g, y] = \langle s \rangle_{\omega} f(\sigma \cdot [g, x] ), 
$$
which proves the lemma.
\end{proof}
%where $\langle s \rangle_{\omega} \colon \Sh_{K_0}(\G_0, X_0) \ra \Sh_{K_0}(\G_0, \omega X_0)$ is given by $\langle s \rangle_{\omega} \colon [g', x'] \mapsto [\omega(s) g', x']$. 

%We summarize the commutativity of $f$ and $\Gal(\Qbar / E)$ in the following: 
%
%\begin{lem}\label{lem:comm-f-gal}
%For $\sigma \in \Gal(E^{\ab} / E)$ and $s_f$ are as above, we have (on special points)
%\begin{equation}\label{eq:commute}
%\sigma \circ f = \langle s \rangle \circ f \circ \sigma.  
%\end{equation}
%\end{lem}

\noindent We finally compute the field of definition of $f$: 

\begin{lem}
The morphism $f \colon \Sh_{K_0}(\G_0, X_0) \ra \Sh_{K_0}(\G_0, \omega X_0)$ is defined over the abelian  extension $E(f) / E$ given by the norm subgroup $E^\times (\omega_f^{-1}(\Z_{\G_0}(\Af) \cap K)) \subset \widehat{E}^\times$. 
\end{lem}

\begin{proof}
Using the reciprocity map   
\begin{equation}
\rec_E \colon \mathbf{A}_{E}^\times/ E^\times \twoheadrightarrow  \Gal(E^{\ab} / E), 
\end{equation}
we calculate the field of definition of $f$ by calculating the corresponding norm subgroup of $\mathbf{A}_{E}^\times / E^\times$ of finite index as follows: using that $s_f \in \Z_{\G_0}(\Af)$ and \eqref{eq:cocycle}), the corresponding cocycle $u(\sigma)$ is trivial on $\sigma \in \Gal(E^{\ab} / E)$ if and only if $s_f \in K_0$, i.e., $f$ is defined over the abelian extension $E(f)$ whose norm subgroup is precisely $E^\times \omega_f^{-1}(\Z_{\G_0}(\Af) \cap K) \subset \mathbf{A}_{E}^\times$. 
\end{proof}

\subsection{Relation between the two Hecke polynomials}

We first relate the Hecke polynomials $H_{\tau}(z)$ to $H_{\omega, \tau}(z)$ for the Shimura data $(\G_0, X_0 )$ and $(\G_0, \omega X_0)$, respectively. We then use that the conjecture is already known for $(\G_0, \omega X_0)$ to deduce the conjecture for $(\G_0, X_0)$. 

\paragraph{Comparing $H_{\tau}(z)$ and $H_{\omega, \tau}(z)$.}

Let $r_{X_0} \colon \Ghat_0 \ra \GL_n(\C)$ be the irreducible representation of $\Ghat_0$ 
whose highest weight is 
the co-character of the domain $X_0$ and let $r_{\omega X_0} \colon \Ghat_0 \ra \GL_n(\C)$ be the irreducible representation whose highest weight is the cocharacter associated to $\omega X_0$. 

\begin{lem}
There exists a function $s \colon \Ghat_0 \ra \C^\times$ such that $r_{\omega X_0} = s r_{X_0}$. Moreover, 
$s$ is invariant under the Weyl group $\Omega(\Tbf)$ and its preimage under the Satake transform 
is exactly $\mathbf{1}_{\omega(\varpi)K}$. 
\end{lem}

\begin{proof}
Consider the image of $\mathbf{1}_{\omega(\varpi) K}$ under the Satake transform which is $\mathbf{1}_{\omega(\varpi) T_c}$. First, we have 
a map 
$$
\cH(G_\tau, K_\tau) \hra \Hom(\Pi_{\ur}(G_\tau), \C), \qquad 
$$
If $\pi$ is an unramified representation of $T_\tau$ then $\pi$ yields a homomorphism $\pi \colon T_\tau / T_c \ra \C^\times$ and hence, a function $f_\pi \in \Hom(X_*(S_\tau), \C^\times)$ given by 
$$
f_{\pi}(\mu) = \pi(\mu(\varpi)), \qquad \mu \in X_*(S_\tau). 
$$
\end{proof}

%\paragraph{Relating $H_{V, \tau}(z)$ and $\widetilde{H}_{V, \tau}(z)$.} We now relate the two Hecke polynomials after inverting the Satake transform following the approach in the previous section (based on Bruhat--Tits theory). 

\begin{lem}\label{lem:heckecomparison}
The Hecke polynomials are related as follows: 
\begin{equation}\label{eq:heckerel}
H_{\tau} \left (\mathbf{1}_{\omega(\varpi) K_{0}} z \right ) = 
\mathbf{1}_{\omega(\varpi)^{n} K_0} H_{\omega, \tau}(z). 
\end{equation}
\end{lem}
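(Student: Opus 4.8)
The plan is to transport \eqref{eq:heckerel} to the dual torus via the Satake isomorphism of Section~\ref{subsec:satake}, where it reduces to an elementary identity of Laurent polynomials. The twisted Satake transform $\widehat{\,\cdot\,}$ is an injective $\Q$-algebra homomorphism of the local Hecke algebra of $\Gtilde_V$ at $\tau$ into (the appropriate invariant subring of) the local Hecke algebra of its maximal torus $\Tbf_V$, and by construction the coefficients of $H_{V,\tau}(z)$ and $\widetilde{H}_{V,\tau}(z)$, transported to functions on $\widehat{\Tbf_V}$, are the ones displayed in \eqref{eq:h} and \eqref{eq:htilde}. Since $\widetilde{H}_{V,\tau}(\mathbf{1}_{\varpi\Ktilde_{V,\tau}}z)$ and $\mathbf{1}_{\varpi^{n}\Ktilde_{V,\tau}}H_{V,\tau}(z)$ both lie in $\cH[z]$, it suffices by injectivity of $\widehat{\,\cdot\,}$ to check that their images under $\widehat{\,\cdot\,}$, applied coefficientwise, coincide.

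First I would compute $\widehat{\mathbf{1}_{\varpi\Ktilde_{V,\tau}}}$. The element $\varpi I_n$ is central in $\Gtilde_V(F_\tau)$ with similitude factor $N_{E_\tau/F_\tau}(\varpi)$; since $\tau$ is inert we may take $\varpi=\varpi_0$, so that $\varpi I_n=\mu(\varpi)$ for the central cocharacter $\mu\colon t\mapsto(tI_n,\,t^{2})$ of $\Gtilde_V$ in the $(M_1,\lambda)$-coordinates of $\GU(V)_{E_\tau}\cong\GL_{n,E_\tau}\times\Gm$ (and the coset $\varpi I_n\Ktilde_{V,\tau}$ does not depend on the choice of uniformizer). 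For a Hecke operator attached to a central cocharacter the modulus twist is trivial, namely $\delta(\varpi I_n)=1$, the Weyl orbit is a single point, and there are no lower-order terms, so $\widehat{\mathbf{1}_{\varpi\Ktilde_{V,\tau}}}$ is exactly the monomial on $\widehat{\Tbf_V}$ attached to $\mu$, namely
\[
D:=x^{2}\prod_{i=1}^{n}x_i,
\]
with $x_1,\dots,x_n$ the coordinates on the $\GL_n(\C)$-factor and $x$ the coordinate on the $\Gm(\C)$-factor of $\widehat{\Gtilde_V}$. Because convolution of indicator functions of cosets of central elements multiplies those elements, $\mathbf{1}_{\varpi^{n}\Ktilde_{V,\tau}}=\bigl(\mathbf{1}_{\varpi\Ktilde_{V,\tau}}\bigr)^{n}$, hence $\widehat{\mathbf{1}_{\varpi^{n}\Ktilde_{V,\tau}}}=D^{n}$.

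The torus computation is then immediate. On $\widehat{\Tbf_V}$ one has $\det(A_V)=\prod_i x_i$, so $x^{2}\det(A_V)=D$; substituting $z\mapsto Dz$ in \eqref{eq:htilde} gives
\[
\widetilde{H}_{V,\tau}(Dz)=\prod_{i=1}^{n}\Bigl(Dz-q^{n}D\,\tfrac{x_{n+1-i}}{x_i}\Bigr)=D^{n}\prod_{i=1}^{n}\Bigl(z-q^{n}\,\tfrac{x_{n+1-i}}{x_i}\Bigr)=D^{n}H_{V,\tau}(z),
\]
which is precisely the $\widehat{\,\cdot\,}$-image of \eqref{eq:heckerel}. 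Applying the inverse of the Satake isomorphism coefficientwise finishes the proof; the exponent $n$ on $\mathbf{1}_{\varpi\Ktilde_{V,\tau}}$ in \eqref{eq:heckerel} simply records $\deg\widetilde{H}_{V,\tau}=n$.

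The only delicate point is the first step: one must be certain that $\widehat{\mathbf{1}_{\varpi\Ktilde_{V,\tau}}}$ is the monomial $D$ with no stray power of $q$ and with the right exponents on the $\GL_n$- and similitude-coordinates. This is the standard fact that a Hecke operator of a central cocharacter is grouplike under Satake with trivial modulus twist, but it must be matched carefully against the normalization of $\widehat{\,\cdot\,}$ used in Section~\ref{subsec:satake} (where $|\varpi|=q^{-2}$) and against the similitude variable $x$ appearing in \eqref{eq:htilde}; here the key input is that $\varpi I_n$ has similitude factor $\varpi^{2}$, which is exactly what forces the factor $x^{2}$ in $D$.
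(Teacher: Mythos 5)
Your proof is correct and follows essentially the same route as the paper: both compare the torus-side product formulas \eqref{eq:h} and \eqref{eq:htilde}, identify the discrepancy factor $(A_V,x)\mapsto x^{2}\det(A_V)$ with the (twisted) Satake transform of $\mathbf{1}_{\varpi\Ktilde_{V,\tau}}$, and conclude by $\Omega(T)$- and $\sigma$-invariance plus injectivity of the transform. The only difference is that the paper cites \cite[Lem.12]{koskivirta:thesis} for this identification, whereas you verify it directly via the standard central-coset computation (single coset, trivial modulus twist, character of the central cocharacter $t\mapsto(tI_n,t^{2})$), which is a harmless substitute.
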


%\begin{proof}
%The two product terms in \eqref{eq:h} and \eqref{eq:htilde} differ by multiplication by the function $(A_V, x) \mapsto x^2 \det(A_V)$ for $A_V \in \GL_n(\C)$ and $x \in \C^\times$. By \cite[Lem.12]{koskivirta:thesis}, this function (viewed as a function on the maximal torus) is a function that is stable under $\sigma$-conjugation and 
%under the Weyl group and hence, it corresponds to an element of $\cH_{\C}(G_{V, \tau}, K_{V, \tau})$ that is precisely $\mathbf{1}_{\varpi \Ktilde_{V, \tau}}$ (again, by \cite[Lem.12]{koskivirta:thesis}). 
%\end{proof}

\begin{proof}
By definition, 
$$
H_\tau(z) = \det \left ( z - q^{d/2} r_{X_0}(g \rtimes \Fr_\tau)^{n_\tau} \right )
\qquad 
\text{and} 
\qquad
H_{\omega, \tau}(z) = \det \left ( z - q^{d/2} r_{\omega X_0} (g \rtimes \Fr_\tau)^{n_\tau}\right ). 
$$
Here, the coefficients are viewed as $\ad(\Ghat_0)$-invariant polynomials on the set $\Ghat_0 \rtimes \Fr_\tau$. Viewed as a polynomial with coefficients in $\C[X_*(T_\tau)]$ and observe that if 
$\chi_\omega \colon \widehat{T}_\tau \ra \Gm$ is the character corresponding to the co-character $\mu_\omega \colon \Gm \ra T_\tau$ associated to $\omega$ then $r_{\omega X_0}|_{\widehat{T}_\tau} = \chi_{\omega} \cdot r_{X_0}|_{\widehat{T}_\tau}$. Now, using the Satake isomorphism for $T_\tau$, we obtain isomorphisms  
$$
\cH_{\C}(T_\tau, T_c) \isom \C[\widehat{S}_\tau] \isom \C[X_*(S_\tau)]. 
$$
A co-character $\mu \in X_*(S_\tau)$ corresponds to the function 
$\mathbf{1}_{\mu(\varpi) T_c} \in \cH(T_\tau, T_c)$. Applying that to $\mu = \mu_{\omega}$ and using that $\mathbf{1}_{\mu(\varpi) T_c} \in \cH(T_\tau, T_c)$ and is exactly the image of 
$\mathbf{1}_{\mu(\varpi) K_0}$ under the Satake isomorphism $\cH(G_{0, \tau}, K_0) \isom \cH(T_\tau, T_c)^{\Omega(T_\tau)}$, we obtain 
\begin{eqnarray*}
H_{\omega, \tau}(\mathbf{1}_{\mu_\omega(\varpi) K_0} \cdot z) &=& 
\det \left ( \mathbf{1}_{\omega(\varpi) T_c} z  - r_{\omega X_0}(g \rtimes \Fr_\tau)^{n_\tau} \right ) = \\
&=& \det \left ( \mathbf{1}_{\omega(\varpi) T_c} z  - 
\mathbf{1}_{\omega(\varpi) T_c} \cdot r_{X_0}(g \rtimes \Fr_\tau)^{n_\tau} \right ) = \mathbf{1}_{\omega(\varpi)^n K_0} \cdot H_{\tau}(z),  
\end{eqnarray*}
which proves the lemma. 
\end{proof}

\subsection{Proof of the conjecture for $(\G_0, X_0)$}
Assume Conjecture~\ref{thm:congrel} holds for $(\G_0, \omega X_0)$, i.e., that $H_{\omega, \tau}(\Fr_\tau) = 0$ acting on the intersection cohomology group (as in the conjecture), where, as before, $\Fr_\tau$ denotes the geometric Frobenius. We first want to show that $H_{\tau}(\Fr_\tau) = 0$, i.e., to deduce the conjecture for the datum $(\G_0, X_0)$. Recall that $\Fr_\tau = \Art_E (\iota_\tau(\varpi))$ where $\iota_\tau \colon E_\tau \ra \mathbf{A}_E$ is the natural inclusion\footnote{We follow the usual normalization where arithmetic Frobenii correspond to 
inverses of uniformizers under the Artin map.}. We thus apply Lemma~\ref{lem:cocyc} for $\sigma = \Fr_\tau$ and $s = \iota_\tau(\varpi)$ to get 
$$
\Fr_\tau = \mathbf{1}_{\omega(\varpi) K_0} \circ f \circ \Fr_{\tau} \circ f^{-1}= f \circ \mathbf{1}_{\omega(\varpi) K_0} \circ \Fr_\tau \circ f^{-1}.  
$$
Here, we have used that the operators $\mathbf{1}_{\varpi K_{0}}$ and $\langle \varpi \rangle$ coincide on the level of the corresponding Shimura varieties and that $f$ and $\mathbf{1}_{\varpi K_{0}}$ commute.  We can thus 
write 
\begin{eqnarray*}
H_{\tau}(\Fr_\tau) &=& H_{\tau}(f \circ \mathbf{1}_{\omega(\varpi) K_0} \circ \Fr_\tau \circ f^{-1}) = f \circ H_{\tau}( \mathbf{1}_{\omega(\varpi) K_0} \circ \Fr_\tau) \circ f^{-1} = \\ 
&=& f \circ \mathbf{1}_{\varpi^n K_0} \circ H_{\omega, \tau}(  \Fr_\tau ) \circ f^{-1} = 0, 
\end{eqnarray*}
where we have used Lemma~\ref{lem:heckecomparison} and the assumption that $H_{\omega, \tau}(  \Fr_\tau ) = 0$.

\section{Distribution Relations and Proof of Theorem~\ref{thm:horizdistrel}}\label{sec:distrel}

\subsection{Distribution relations on $\Z[\Inv_\tau]$}

\paragraph{The action of the local Hecke algebra on $\Z[\Hyp_\tau]$ and $\Z[\Inv_\tau]$.}
The local Hecke algebra $\cH_\tau = \cH(G_\tau, K_\tau)$ acts on 
$\Z[\Hyp_\tau] = \Z[G_\tau / K_\tau]$ as follows: under the fixed 
identification $\Hyp_\tau \isom G_\tau / K_\tau$, the Hecke operator 
$\mathbf{1}_{K_\tau t K_\tau}$ acts on a hyperspecial point $[g K_\tau]$ via 
$[g K_\tau] \mapsto \sum_{\alpha} [ gg_\alpha K_\tau ]$, 
where $\ds K_\tau g K_\tau = \bigsqcup_{\alpha} g_\alpha K_\tau$ is the double coset decomposition. It is easy to check that this action is well-defined and in addition, it descends to an action of $\cH_\tau$ on $\Z[H_\tau \backslash \Hyp_\tau]  = \Z[\Inv_\tau]$. The latter action can be computed completely explicitly as summarized in the following lemma that is an easy consequence of the adjacency relations in the Bruhat--Tits trees from Figure~\ref{fig:buildings}:  

\begin{lem}\label{lem:local-hecke-inv}
We have 
$$
t_{1, 0} \cdot (a, b) = 
\begin{cases}
(a-1, b) + (q- 1) (a, b) + q^4 (a+1, b) & \text{if } a > 0, \\
(q^3-q)q(1,b) + q^2(0, b+1) + (q-1)(0, b) + (0, b-1) & \text{if } a = 0,\ b > 0, \\ 
(q^3-q)q(1,0) + q(q+1)(0, 1) & \text{if } a = b = 0. 
\end{cases}
$$
and 
$$
t_{0,1} \cdot (a, b) = 
\begin{cases}
q(q+1) (a, 1) & \text{if } b = 0,\\
(a, b-1) + (q-1) (a, b) + q^2 & \text{if } b > 0. 
\end{cases}
$$
\end{lem} 

\begin{rem}
There is a more algebraic proof of the lemma which uses explicit pair of lattices with invariants $(a, b) \in \Inv_\tau$ and applies the Hecke operators $t_{1,0}$ and $t_{0,1}$ directly to those. Fix any $s \in \cO_{E_\tau}$ such that $s + \overline{s} = 0$. 
The proof of Lemma~\ref{lem:transitive} shows that the Witt basis $\{ e_+, e_0, se_+ + e_0 + e_-\}$ 
determines a special apartment. Consider the lattice 
$$
L_V =\langle \varpi^a e_+, e_0, \varpi^{-a} (se_+ + e_0 + e_-)\rangle,  
$$
as well as the lattice $ \langle \varpi^{-b} e_+, e_0, \varpi^b(se_+ + e_0 + e_-)\rangle$. The latter can also be written as $L_W \oplus \cO_{E_\tau} e_0$ where $L_W = \langle \varpi^{-b} e_+, \varpi^b(se_+ + e_-)\rangle$. We easily compute that the pair $(L_V, L_W)$ satisfies $\inv_\tau (L_V, L_W) = (a, b)$. To apply $t_{1, 0}$ and $t_{0, 1}$, we decompose the double-cosets $K_V (\varpi, 1, \varpi^{-1})K_V$ and $K_W (\varpi, \varpi^{-1})K_W$ into disjoint union of right-cosets. This approach is useful when generalizing the distribution relation to arbitrary odd $n$ instead of $n = 3$ where it is difficult to draw the graphs. It is also used in the case when the allowable prime is split see~\cite{boumasmoud-brooks-jetchev:split}. 
\end{rem}

\paragraph{Computing the distribution relations on $\Z[\Inv_\tau]$.}

\begin{prop}\label{prop:distrelinv}
We have $H_\tau(1) \cdot (0, 0) \in q(q+1) \Z[\Inv_\tau]$. 
\end{prop}

\begin{proof}
The proof is a consequence of Lemma~\ref{lem:local-hecke-inv} combined with Theorem~\ref{thm:hecke}. More precisely, using the explicit form of the Hecke polynomial together with the action of the Hecke operators $t_{1, 0}$ and $t_{0, 1}$, we compute:  

\begin{eqnarray*}
H_\tau(1) \cdot (0, 0) &=& -(q - 1)q(q + 1)^2(q^6 - q^5 + 2q^4 - q^2 + q - 1)(q^6 - q^2 + 1)\cdot(0, 1) + \\
&+&  (q - 1)(q + 1)q^6(q^2 + q + 1)(q^6 - q^2 + 1) \cdot (1, 2) + \\
&+& (q - 1) q (q + 1)^2 (q^{14} - q^{13} + 2q^{12} - q^{11} - q^{10} + 3q^9 - 4q^8 + 2q^7 + 2q^6 - 4q^5 + \\ 
&+& 4q^4 - q^3 - 2q^2 + 2q - 1) \cdot (0, 0) - \\
&-& (q - 1) (q + 1)^2 q^{11} \cdot (2, 1) - \\ 
&-& (q - 1) (q + 1) q^3 (q^{13} + q^{12} + q^{10} - q^9 - q^8 + q^6 + 2q^4 - q^3 - 3q^2 + q + 1) (1,1) + \\ 
&+& (q - 1)(q + 1)^2 q^8 (q^5 - q^4 + q^3 - q + 1) \cdot (2, 0) + \\
&+& (q - 1) (q + 1) q^2 (q^{13} - q^{12} + 2q^{10} - 2q^9 + 2q^8 + 2q^7 - 4q^6 + q^5 - 2q^3 + 2q^2 + q - 1) \cdot (1, 0) + \\
&+& (q - 1)^2 (q + 1)^3 q^7 (q^2 + 1) \cdot (0, 3) - \\ 
&-& (q - 1)^2 q^3 (q + 1)^4 (q^2 + 1) (q^5 - q^4 + q^3 - q + 1) \cdot (0, 2). 
\end{eqnarray*}
Clearly, the latter is an element of $q(q+1) \Z[\Inv_\tau]$. 
\end{proof}

\subsection{The action of the Hecke algebra and the Galois action}
The Galois group $\Gal(E^{\ab} / E)$ acts on $\Z[\cZ_K(\G, \Hbf)]$ via the Shimura reciprocity law described in Section~\ref{subsec:shimvar}. The Hecke algebra $\cH(\G, K)$ of $K$-bi-invariant locally constant functions on $\G(\Af)$ acts on both $\Z[\cZ_K(\G, \Hbf)]$ and $\Z[\G(\Af) / K]$ and the action on the former is Galois equivariant. Recall that if $g' \in \G(\Af)$ then the function $\mathbf{1}_{K g'K} \in \cH(\G, K)$ acts as follows: if $\ds K g' K = \bigsqcup g_i K$ then 
%for $g \in \G(\Af)$, the corresponding endomorphism of $\Z[\G(\Af) / K]$ is  
\begin{equation}\label{eq:hecke-build}
\ds \mathbf{1}_{K g' K} (gK) = \sum g g_i K, \qquad g \in \G(\Af).  
\end{equation}
Similarly, 
\begin{equation}\label{eq:hecke-cycles}
\mathbf{1}_{K g' K} \left ( \cZ_K(g) \right )= \sum \cZ_K(g g_i), \qquad g \in \G(\Af). 
\end{equation}
We will prove Theorem~\ref{thm:horizdistrel} by relating the two actions locally at the fixed allowable inert place $\tau$, i.e., by relating the action of the local Hecke algebra $\cH(G_\tau, K_\tau)$ to the action of the decomposition group at $\tau$.

\subsection{Proof of Theorem~\ref{thm:horizdistrel}}
%The main idea is to apply the Hecke polynomial on the level of the building (i.e., to $\Z[\Hyp_\tau]$) to a hyperspecial 
%vertex of type $(0, 0)$ and use the fact that each Hecke operator that appears as a coefficient is an adjacency operator, thus, giving 
%the action expressing the resulting element as a sum of certain neighbors. We would then like to descend this expression on the level of cycles (i.e., in $\Z[\cZ_K(\G, \Hbf)]$). Here, some of the points on the building will get identified. Although it is hard to say which neighbors yield the same Galois conjugates, the property that the Galois action commutes with the Hecke action will give us that each Galois conjugate appears an equal number of times among the neighbors of a certain type and thus, using Theorem~\ref{thm:galois} as well as Proposition~\ref{prop:orbits} we can recognize the left-hand side as an exact trace and thus, complete the proof. 

Let $\xi \in \cZ_K(\G, \Hbf)$ be the cycle from the statement of Theorem~\ref{thm:horizdistrel} (recall that $\int_\tau(\xi) = (0, 0)$). We consider $H_\tau(\Fr_\tau) \cdot \xi$ and look at the local invariants (at $\tau$) of the resulting linear combination of ``adjacent" cycles by using the distribution relation on invariants (Proposition~\ref{prop:distrelinv}). To 
verify that one gets an exact trace (down to $E_\tau$) of an element of $\Z[\cZ_K(\G, \Hbf)]$ whose local conductor at $\tau$ is $\varpi^2$, we will use the fact that the Galois and the Hecke actions commute. 

\paragraph{The action of the decomposition group.} Let $\xi_n \in \cZ_K(\G, \Hbf)$ be a cycle of local conductor $\bc_\tau(\xi) = \tau^n$ for some $n > 0$ (by abuse of notation, $\tau^n$ means the $n$th power of the fixed uniformizer of $E_\tau$). 
%Since $\ds \cH(\G, K) \isom \cH(G_\tau, K_\tau) \otimes \cH(G^{(\tau)}, K^{(\tau)})$ and since 
%$$
%\G(\Af)/K \isom G_\tau / K_\tau \times G^{(\tau)} / K^{(\tau)} \isom \Hyp_\tau \times G^{(\tau)} / K^{(\tau)},
%$$ 
%it suffices to understand the distribution relations locally at $\tau$. Here, $\Hyp_\tau$ means the set of hyperspecial vertices for the building of $G_\tau = G_{V, \tau} \times G_{W, \tau}$.    
Given an integer $m$, $0 \leq m < n$, we define the local trace at of this cycle $\tau$ as  
$$
\Tr_{n, m}(\xi) := \sum_{x \in \cO_{m}^\times / \cO_{n}^\times} \xi^{\Art_{E_\tau}(x)},  
$$
where $\Art_{E_\tau} \colon E_\tau^\times \ra \Gal(E_\tau^{\ab} / E_\tau)$ is the local Artin 
map and we view $\Gal(E_\tau^{\ab} / E_\tau)$ as a subgroup of $\Gal(E^{\ab} / E)$ via the chosen embedding $\iota_\tau \colon \overline{E} \ra \overline{E}_\tau$. The above trace is the exact analogue of the trace $\Tr_{E[\tau^n] / E[\tau^m]}$ where $E[\tau^n]$ denotes the ring class field of conductor 
$\tau^n$ (by abuse of notation, $\tau$ denotes the prime ideal of $F$ corresponding to the place $\tau$). Note that these traces are used in the theory of Heegner points over the anticyclotomic towers (see also \cite[App.]{cornut-vatsal}).

\paragraph{Galois equivariance.}
%Let $\xi_{0, 0} \in \cZ_K(\G, \Hbf)$ denote the cycle $\xi$ from the statement of Theorem~\ref{thm:horizdistrel} (we add the subscripts to indicate the invariants at $\tau$). 
Write $\ds H_\tau(1) \cdot \xi = \sum_{\xi' \in \cZ_K(\G, \Hbf)} c_{\xi'} \cdot \xi'$. By taking invariants on both sides and using the distribution relations on $\Z[\Inv_\tau]$ (Proposition~\ref{prop:distrelinv}), we obtain exactly the equality of Proposition~\ref{prop:distrelinv}. This allows us to write
\begin{eqnarray*}\label{eq:distrel}
H_\tau(1) \cdot \xi = \sum_{(a, b) \in \Inv_\tau} \sum_{x \in \cO_0^\times / \cO_{c}^\times}  m_{a, b}(x)\xi_{a, b}^{\Art_{E_\tau}(x)}, 
\end{eqnarray*}
where $c = \min(a, 2b)$. The latter is justified by the fact that for any $\sigma \in \Gal(E[c]_\tau / E_\tau)$ there exists $\sigma' \in \Gal(E[c]_\tau / E[1]_\tau)$ such that $\xi^{\sigma} = \xi^{\sigma'}$. This fact is not hard to deduce from the reciprocity laws from Section~\ref{par:concomp} and Section~\ref{par:galorbits-localcond}. It remains to argue that for fixed $(a, b) \in \Inv_\tau$, the multiplicities $m_{a, b}(x)$ are equal as $x$ varies over $\cO_{0}^\times / \cO_{c}^\times$. The latter follows from the fact that the action of the local Hecke action commutes with the action of the decomposition group. This is sufficient to deduce Theorem~\ref{thm:horizdistrel} from Proposition~\ref{prop:distrelinv}: for each $(a, b)$ for which $c > 0$ the inner sum is in the image of $\Tr_{2, 0}$ by the equality of the multiplicities; for each $(a, b)$ for which $c = 0$, $m_{a, b}(1) \xi_{a, b} \in \text{Im} (\Tr_{2, 0})$ since $m_{a, b}(1) \equiv 0 \bmod q(q+1) = \# \cO_{0}^\times / \cO_2^\times$ (again, by Proposition~\ref{prop:distrelinv}).

\section*{Acknowledgements}
I am grateful to Christophe Cornut for the numerous conversions and for sharing his profound knowledge about various aspects of the subject. I am indebted to Hunter Brooks for the careful reading of the draft and for the multiple helpful discussions and feedback. I thank Yiannis Sakellaridis for the numerous discussions and for helping me understand the Satake transform and various other aspects of the computation in a more conceptual manner.   
I thank Reda Boumasmoud, Henri Darmon, Benedict Gross, Ben Howard, Jean-Stefan Koskivirta, Philippe Michel, Paul Nelson, Jan Nekov\'{a}\v{r}, Richard Pink, Ken Ribet, Karl Rubin, Sug-Woo Shin, Chris Skinner, Xinyi Yuan, Shou-Wu Zhang and Wei Zhang for their valuable comments.

\bibliographystyle{amsalpha}
\bibliography{biblio}

\end{document}